\documentclass[english,11pt,reqno]{amsart}
\usepackage{babel}
\usepackage{amsmath,amsfonts,amssymb}

\makeatletter
\newsavebox{\@brx}
\newcommand{\llangle}[1][]{\savebox{\@brx}{\(\m@th{#1\langle}\)}%
  \mathopen{\copy\@brx\kern-0.5\wd\@brx\usebox{\@brx}}}
\newcommand{\rrangle}[1][]{\savebox{\@brx}{\(\m@th{#1\rangle}\)}%
  \mathclose{\copy\@brx\kern-0.5\wd\@brx\usebox{\@brx}}}
\makeatother

\newcommand{\twoheaddownarrow}{\raise2pt\hbox{%
  \ooalign{\hss$\downarrow$\hss\cr\lower2pt\hbox{%
  $\downarrow$}}}}
\newcommand{\twoheaduparrow}{\raise2pt\hbox{%
  \ooalign{\hss$\uparrow$\hss\cr\lower2pt\hbox{%
  $\uparrow$}}}}



\theoremstyle{plain}
\newtheorem{theorem}{Theorem}[section]
\newtheorem{proposition}[theorem]{Proposition}
\newtheorem{lemma}[theorem]{Lemma}
\newtheorem{corollary}[theorem]{Corollary}

\theoremstyle{definition}
\newtheorem{definition}[theorem]{Definition}
\newtheorem{example}[theorem]{Example}
\newtheorem{assumption}[theorem]{Assumption}

\theoremstyle{remark}
\newtheorem{remark}[theorem]{Remark}

\numberwithin{equation}{section}

\begin{document}
\title[Scott functions and their representations]
{Scott functions, their representations on domains, and applications
  to random sets}

\author{Motoya Machida}
\address{Department of Mathematics, Tennessee Technological University,
Cookeville, Tennessee 38505}
\email{mmachida@tntech.edu}

\author{Alexander Y.~Shibakov}
\address{Department of Mathematics, Tennessee Technological University,
Cookeville, Tennessee 38505}
\email{ashibakov@tntech.edu}

\date{\today}

\begin{abstract}
Choquet theorems (1954) on integral representation for capacities
are fundamental to probability theory. They inspired
a growing body of research into different approaches
and generalizations of Choquet's results by many other researchers.
Notably Math\'{e}ron's work (1975) on distributions over the space
of closed subsets has led to further advancements in the theory of random sets.

This paper was inspired by the work of Norberg
(1989) who generalized Choquet's results to distributions over domains.
While Choquet's original theorems were obtained for locally compact
Hausdorff (LCH) spaces,
both Math\'{e}ron's and Norberg's depend on the assumption
of separability in their application of the Carath\'{e}odory's method.

Our Radon measure approach differs from the work of Math\'{e}ron
and Norberg, in that it does not require separability.
This investigation naturally leads to
the introduction of finite and locally finite valuations,
which allows us to characterize
finite and locally finite random sets
in terms of capacities on the class of compact subsets.

Finally, the treatment of L\'{e}vy exponent by Math\'{e}ron
and Norberg is revisited,
and the notion of exponential valuation is proposed for the
representation of general Poisson processes.
\end{abstract}

\subjclass[2010]{Primary 60D05; secondary 28C15, 06B35.}

\keywords{Scott functions, Choquet theorems on domains, random sets,
  finite and locally finite valuations, L\'{e}vy exponent}

\maketitle

\section{Introduction}

\subsection{Capacities and Choquet theorems}
\label{choquet.intro}
The theory of {\em random sets\/} forms an important area of probability
that has generated a lot of research activity
(see~\cite{molchanov} for an extensive survey of the field and the references therein)
dating back to the
foundational work of Kolmogorov~\cite{kolmogorov} in the 1930s.
An important tool in the analysis of the
distribution of a given random set is supplied by the {\em capacity\/}
functional that translates the study of the (very rich)
$\sigma$-algebra of random set into the investigation of measure-like
functionals on the sets themselves.

The capacity functional is not a direct replacement of a measure due
to its nonadditivity. Nonetheless, the classical results of Choquet
show that a few natural algebraic properties are all that is required
to establish a one-to-one correspondence between random set
distributions and capacities.

Let us introduce the notion of capacities over the class
$\mathcal{K}$
of compact subsets of a locally compact Hausdorff (LCH) space~$R$.
A set function $\Phi$ on $\mathcal{K}$ is called
\emph{continuous on the right} in \cite{choquet}
if for any $E\in\mathcal{K}$ and $\varepsilon > 0$
there is an open neighborhood $G$ of $E$ such that
$|\Phi(F) - \Phi(E)| < \varepsilon$
whenever $E\subseteq F\subseteq G$.
Call $\Phi$ a \emph{capacity} if it is nonnegative, increasing
[i.e.\ $\Phi(E)\le\Phi(F)$ whenever $E\subseteq F$],
and continuous on the right.
We assume $\Phi(\varnothing) = 0$,
although $\Phi$ may not be bounded in general.
When it is bounded, it is often assumed that
$\Phi$ is normalized
[i.e.\ $\sup_{Q\in\mathcal{K}}\Phi(Q) = 1$].

Define a difference
$\nabla_{Q_1}\Phi(Q)
{= \Phi(Q) - \Phi(Q\cup Q_1)}$,
and the successive difference
$\nabla_{Q_1,\ldots,Q_n}\Phi(Q)$
recursively by
$\nabla_{Q_n}\left(\nabla_{Q_1,\ldots,Q_{n-1}}\Phi\right)(Q)$.
A capacity $\Phi$ is called
\emph{completely alternating}
if
$\nabla_{Q_1,\ldots,Q_n}\Phi(Q) \le 0$
holds for any finite sequence $Q$, $Q_1,\ldots,Q_n$ of
$\mathcal{K}$.

The Fell topology on the space $\mathcal{F}'$ of nonempty closed
subsets in $R$ is LCH
(see Example~\ref{class.k}, or~\cite{matheron, molchanov}).
Now a natural capacity can be defined as
\begin{equation}\label{hitting}
\Phi(Q)
= \lambda(\{F\in\mathcal{F}': Q\cap F\neq\varnothing\}),
\quad
Q\in\mathcal{K}
\end{equation}
where $\lambda$ is a Radon measure on $\mathcal{F}'$.
It may be observed that
\begin{equation*}
-\nabla_{Q_1,\ldots,Q_n}\Phi(Q)
= \lambda(\{F\in\mathcal{F}':
Q\cap F =\varnothing,
Q_i\cap F \neq\varnothing,
i = 1,\ldots,n\}),
\end{equation*}
and that
$\Phi$ as defined above is always completely alternating.
Conversely, Choquet~\cite{choquet} has shown that
this property is sufficient for a given capacity to be representable
in the form above.

\begin{theorem}\label{choquet.1}
If a capacity $\Phi$ is completely alternating
then there exists a unique Radon measure $\lambda$ on $\mathcal{F}'$
satisfying (\ref{hitting}).
\end{theorem}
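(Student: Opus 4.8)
The plan is to construct $\lambda$ by the Radon-measure route, avoiding the Carath\'eodory extension that forces separability on Math\'eron and Norberg. Since $R$ is LCH, the space $\mathcal{F}$ of \emph{all} closed subsets of $R$, taken with the Fell topology, is compact Hausdorff, so $\mathcal{F}' = \mathcal{F}\setminus\{\varnothing\}$ is an open, hence locally compact Hausdorff, subspace, and a Radon measure on it is precisely a positive linear functional on $C_c(\mathcal{F}')$. Everything below uses only this local compactness, never separability.

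First I would extend the given $\Phi$ off $\mathcal{K}$: set $\Phi(G) = \sup\{\Phi(Q) : Q\in\mathcal{K},\ Q\subseteq G\}$ for open $G\subseteq R$, and $\Phi(A) = \inf\{\Phi(G) : G\supseteq A\text{ open}\}$ for arbitrary $A\subseteq R$. Continuity on the right of $\Phi$ makes this agree with $\Phi$ on $\mathcal{K}$, and monotonicity is preserved. The ``trapping sets''
\[
\mathcal{F}^{\,K}_{G_1,\ldots,G_n} = \{F\in\mathcal{F}' : K\cap F = \varnothing,\ G_i\cap F\neq\varnothing,\ i=1,\ldots,n\},
\qquad K\in\mathcal{K},\ G_i\subseteq R\text{ open},
\]
form a base of the Fell topology on $\mathcal{F}'$ which is closed under finite intersections, and the identity displayed just before the theorem dictates the only admissible value, $\lambda\big(\mathcal{F}^{\,K}_{G_1,\ldots,G_n}\big) = -\nabla_{G_1,\ldots,G_n}\Phi(K)$, the difference operators being computed with the extended $\Phi$. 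Complete alternation of $\Phi$ is exactly the assertion that all of these candidate values are nonnegative.

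Next I would verify that this candidate is a (finitely additive) content on the semiring of trapping sets and that it promotes to a countably additive Radon measure. Finite additivity is a manipulation in the calculus of the operators $\nabla$ in which the inclusion--exclusion bookkeeping closes up precisely because $\Phi$ is completely alternating, not merely alternating of finite order; continuity on the right of $\Phi$ together with the compactness of $\mathcal{F}$ supplies the regularity (continuity along monotone nets of trapping sets) needed for countable additivity. Fixing an exhaustion $R = \bigcup_n K_n^{\circ}$ with $K_n\in\mathcal{K}$, the open trapping sets $\mathcal{F}_{K_n^{\circ}} = \mathcal{F}^{\,\varnothing}_{K_n^{\circ}}$ cover $\mathcal{F}'$ and satisfy $\lambda(\mathcal{F}_{K_n^{\circ}})\le\Phi(K_n)<\infty$, so $\lambda$ is $\sigma$-finite; one carries out the extension on each piece $\mathcal{F}_{K_n^{\circ}}$ and glues. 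It is at this step that the representation theorems for Scott functions---equivalently, for locally finite valuations---developed in the body of the paper take the place of the Carath\'eodory construction and thereby remove the separability hypothesis. Finally, for $Q\in\mathcal{K}$ the set $\mathcal{F}_Q$ is the decreasing intersection $\bigcap\{\mathcal{F}_G : G\text{ open},\ Q\subseteq G\}$, so $\lambda(\mathcal{F}_Q) = \inf_G\lambda(\mathcal{F}_G) = \inf_G\Phi(G) = \Phi(Q)$, which is (\ref{hitting}).

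For uniqueness, any two Radon measures satisfying (\ref{hitting}) agree on every $\mathcal{F}_Q$, $Q\in\mathcal{K}$, hence---pushing the open coordinates of a trapping set up through compact exhaustions, again via the $\nabla$-calculus---on every trapping set; the trapping sets form a $\pi$-system generating the Borel $\sigma$-algebra of $\mathcal{F}'$ that contains the countable cover $\{\mathcal{F}_{K_n^{\circ}}\}$ on which both measures are finite, so a monotone class argument gives equality. The main obstacle is the middle step: showing that the signed quantities $-\nabla_{G_1,\ldots,G_n}\Phi(K)$ are mutually consistent under every way of writing a Fell-open set as a finite Boolean combination of basic trapping sets, and that the resulting content is regular enough to extend to a Radon measure on the possibly non-separable, merely locally compact space $\mathcal{F}'$; equivalently, one must prove that complete alternation is not only necessary but sufficient for the valuation-theoretic bookkeeping to close, which is exactly what the Scott-function representation machinery is designed to deliver.
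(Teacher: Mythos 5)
Your existence argument defers the crux to a black box. The plan---assign the values $-\nabla_{G_1,\ldots,G_n}\Phi(K)$ to trapping sets, check finite additivity on that semiring, and then ``promote'' to a countably additive Radon measure---is exactly the Carath\'eodory-type route of Math\'eron and Norberg, and the promotion step is precisely where separability is used in their proofs. You acknowledge this (``the main obstacle is the middle step'') and then assert that the paper's Scott-function representation machinery ``is designed to deliver'' it, but you never connect the two: the paper's machinery does not extend a content on a semiring at all. It builds the measure by a completely different mechanism---discrete approximating measures obtained from M\"obius inverses of $\varphi$ (or of $\varphi_a=-\nabla_a\Phi$) on finite subsemilattices, placed on the compact Hausdorff lattice of Scott open sets / open filters, followed by weak$^*$ compactness and the approximation theorems (Theorems~\ref{s.approx} and~\ref{t.approx}), with $\mathcal{F}'$ identified with the nonempty closed sets via Hofmann--Mislove (Theorem~\ref{hofmann.mislove}). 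As written, your proof has no actual construction of $\lambda$ on a non-separable $\mathcal{F}'$; the step that would distinguish it from the separability-bound extension argument is missing.

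There is also an unwarranted hypothesis: you fix an exhaustion $R=\bigcup_n K_n^{\circ}$, but Theorem~\ref{choquet.1} assumes only that $R$ is LCH, not $\sigma$-compact (Example~\ref{ex.nonsep} is not $\sigma$-compact), so the $\sigma$-finite ``glue over $\mathcal{F}_{K_n^{\circ}}$'' step and the countable cover in your uniqueness argument are unavailable. The uniqueness argument has a second, independent defect: a $\pi$-system/monotone-class argument only determines the measure on the $\sigma$-algebra generated by the trapping sets, which is strictly smaller than the Borel $\sigma$-algebra of $\mathcal{F}'$ when $R$ is not second-countable; indeed uniqueness over Borel measures genuinely fails in this generality (Example~\ref{ex.nonunq}). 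Uniqueness must be argued within the class of Radon measures using regularity---as the paper does in Lemma~\ref{mu.unique} and Theorem~\ref{m.ca}, where the value on each basic Lawson-open set is recovered as a supremum of the prescribed quantities via monotone convergence for nets---rather than by generating the Borel $\sigma$-algebra from a countable cover.
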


Theorem~\ref{choquet.1} is
referred to as ``Choquet theorem'' in \cite{berg,matheron,molchanov},
and reappears as Theorem~\ref{m.ca} below.
This result fundamentally characterizes
the distribution of a closed random set on an LCH space $R$.
That is, when $\Phi$ is normalized,
the representation (\ref{hitting})
is interpreted as the probability of a
closed random set hitting the set $Q$;
thus, $\Phi$ of Theorem~\ref{choquet.1}
is called a \emph{hitting capacity}.

Let $\mathcal{F} := \mathcal{F}'\cup\{\varnothing\}$ be
the space of closed sets which can be viewed as the one-point
compactification of $\mathcal{F}'$.
Choquet theorems over $\mathcal{F}$ and $\mathcal{F}'$
(Theorems~\ref{m.cm} and~\ref{m.ca}, respectively)
are discussed in Section~\ref{sec.cm}.
A nonnegative set function $\varphi$ on $\mathcal{K}$
is called a \emph{conjugate functional}
(or, simply a \emph{conjugate}) if it is decreasing and
continuous on the right,
and it is said to be \emph{completely monotone}
if $\nabla_{Q_1,\ldots,Q_n}\varphi(Q) \ge 0$ holds
for any $Q$ and $Q_i$'s.
A completely monotone conjugate functional $\varphi$ uniquely
corresponds to a Radon measure $\mu$ on $\mathcal{F}$
(Theorem~\ref{m.cm}) that
satisfies
\begin{equation}\label{avoidance}
\varphi(Q)
= \mu(\{F\in\mathcal{F}: Q\cap F=\varnothing\}),
\quad
Q\in\mathcal{K},
\end{equation}
in which case $\varphi$ is known as an \emph{avoidance functional}.

The conjugate $\varphi$ is bounded by $\varphi(\varnothing)$,
so we may assume $\varphi(\varnothing) = 1$ without loss of
generality.
Then the corresponding $\mu$ in (\ref{avoidance})
is a probability measure, and it has the mass
$\mu(\{\varnothing\}) = \inf_{Q\in\mathcal{K}}\varphi(Q)$.
We call $\varphi$ \emph{degenerate}
if $\inf_{Q\in\mathcal{K}}\varphi(Q) > 0$.
If an avoidance functional $\varphi$ is nondegenerate
then the measure $\mu$ of (\ref{avoidance}) is identified with
$\lambda$ of Theorem~\ref{choquet.1},
so that the hitting capacity $\Phi$ of (\ref{hitting})
is the complement of $\varphi$ of (\ref{avoidance})
[i.e., $\Phi(Q) = 1 - \varphi(Q)$].

We define another difference operator
$\Delta_{Q_1}\Phi(Q)$
$= \Phi(Q) - \Phi(Q\cap Q_1)$,
and the successive difference
$\Delta_{Q_1,\ldots,Q_n}\Phi$
recursively.
Then we call $\Phi$ \emph{completely $\cap$-monotone}
if $\Delta_{Q_1,\ldots,Q_n}\Phi(Q)\ge 0$ holds
for any finite sequence $Q$, $Q_1,\ldots,Q_n$ of $\mathcal{K}$.
Here we consider the LCH space
of $\mathcal{K}' := \mathcal{K}\setminus\{\varnothing\}$
as introduced in Example~\ref{class.k},
and form a \emph{containment} capacity
\begin{equation}\label{mono.rep}
\Phi(Q)
= \Lambda(\{E\in\mathcal{K}': E\subseteq Q\}),
\quad
Q\in\mathcal{K}
\end{equation}
using a Radon measure $\Lambda$ on $\mathcal{K}'$.
It was also shown by Choquet~\cite{choquet}
that the complete $\cap$-monotonicity can sufficiently characterize
a containment capacity.
This version of Choquet's theorem
over $\mathcal{K}'$ follows from Theorem~\ref{R-rep}.

Our investigation was inspired by the pioneering work
of Norberg~\cite{norberg} on the
existence theorems for measures on domains that extend the results of
Choquet's.
In Section~\ref{domains} we review domain theory,
and explore the notion of Scott and Lawson topology
for partially ordered sets (posets),
in which the ordering of $\mathcal{K}$ is given by
the reverse inclusion (Example~\ref{class.k}).
Consequently the conjugate $\varphi$ is increasing on the
poset $\mathcal{K}$, so $\Phi$ becomes its conjugate,
with both of them treated as continuous functions in the
Scott topology on $\mathcal{K}$, and called
\emph{Scott functions} (Definition~\ref{scott.fun}).

In Section~\ref{radon.measure}
we discuss a representation of Scott functions
in terms of a Radon measure over the
lattice $\mathcal{S}$ of nonempty Scott open sets
(over the semilattice $\mathcal{S}'$)
in Proposition~\ref{s.rep}
(Proposition~\ref{Phi.rep}, respectively);
in this paper $\mathcal{S}'$ indicates the removal of the top element from
$\mathcal{S}$.
A representation over $\mathcal{S}$ or $\mathcal{S}'$
has been suggested by Choquet~\cite{choquet},
and explored by Murofushi and Sugeno~\cite{murofushi}
in their study of monotone set functions
(Remark~\ref{km.theorem}). However, to our knowledge, this approach
has never been applied to establish Choquet's theorems in the general
case over domains.

One advantage of Radon measure approach over the technique
of~\cite{norberg}
is the lack of any separability conditions.
Throughout Sections~\ref{sec.cm}--\ref{sup.mono}
we will employ approximation theorems of
Section~\ref{approx.sec} extensively,
and establish Theorem~\ref{choquet.1} and other flavors of
Choquet theorem on domains without any appeal to separability.
Although approximation theorems are standard techniques
in harmonic analysis (cf.~\cite{berg,choquet.lecture}),
Theorems~\ref{s.approx} and~\ref{t.approx}
in this context are novel to the best of the authors' knowledge.
Let us outline the existence proof for the Choquet's theorem
over $\mathcal{F}$ (Theorem~\ref{m.cm}):
By Theorem~\ref{hofmann.mislove}, the space $\mathcal{F}$ is
homeomorphic to a compact subspace of $\mathcal{S}$.
Once $\varphi$ is approximated by Radon measures over $\mathcal{F}$
(see Definition~\ref{approx.def} and the existence proof of
Theorem~\ref{m.cm}),
Theorem~\ref{s.approx} guarantees the existence of a Radon measure
$\mu$ over $\mathcal{F}$ satisfying (\ref{avoidance}).

\subsection{Finite and locally finite random sets
and point processes}
\label{val.intro}
The continuous lattice and domain versions of Choquet's theorems
obtained in this paper provided an inspiration for the following idea
of a \emph{$k$-valuation\/} which proved useful in the study of
point-processes below.

A capacity $\Phi$ is called a \emph{valuation}
if it is completely alternating and completely $\cap$-monotone,
or equivalently if it satisfies
\begin{equation*}
  \Phi(Q_1) + \Phi(Q_2)
  = \Phi(Q_1\cup Q_2) + \Phi(Q_1\cap Q_2)
\end{equation*}
for every pair $Q_1, Q_2$ of $\mathcal{K}$.
We call $\Phi$ a \emph{$k$-valuation} 
if $\Phi$ is completely alternating and it satisfies
\begin{equation*}
  \nabla_{Q_1,\ldots,Q_{k+1}}
  \Phi\left(\bigcup_{i\neq j} Q_i\cap Q_j\right) = 0
\end{equation*}
for every $(k+1)$-tuple $Q_1,\ldots,Q_{k+1}$ of $\mathcal{K}$.
By $\mathcal{P}_k' := \{A\in\mathcal{F}': 1\le |A| \le k\}$
we denote the collection of non-empty finite subsets of
at most size~$k$.
To the best of our knowledge, the notion of a $k$-valuation and
the corresponding representation theorem below have not been studied before.

\begin{theorem}\label{val.2}
If $\Phi$ is a $k$-valuation
then the Radon measure
$\lambda$ of Theorem~\ref{choquet.1} is supported by
$\mathcal{P}_k'$.
\end{theorem}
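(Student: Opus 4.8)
The plan is to exploit the probabilistic interpretation of the representing measure $\lambda$ together with the identity for successive differences already recorded in the excerpt. Recall that for the hitting capacity $\Phi$ of (\ref{hitting}) one has
\begin{equation*}
-\nabla_{Q_1,\ldots,Q_n}\Phi(Q)
= \lambda(\{F\in\mathcal{F}':
Q\cap F =\varnothing,\ Q_i\cap F \neq\varnothing,\ i = 1,\ldots,n\}).
\end{equation*}
So the $k$-valuation condition $\nabla_{Q_1,\ldots,Q_{k+1}}\Phi\bigl(\bigcup_{i\neq j}Q_i\cap Q_j\bigr)=0$ translates, via this identity with $Q = \bigcup_{i\neq j}Q_i\cap Q_j$, into the statement that
\begin{equation*}
\lambda\Bigl(\bigl\{F\in\mathcal{F}':
F\cap\bigl(\textstyle\bigcup_{i\neq j}Q_i\cap Q_j\bigr)=\varnothing,\
F\cap Q_i\neq\varnothing\ \text{for all }i=1,\ldots,k+1\bigr\}\Bigr)=0
\end{equation*}
for every $(k+1)$-tuple $Q_1,\ldots,Q_{k+1}$ in $\mathcal{K}$. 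The geometric meaning is exactly what we want: if a closed set $F$ meets $k+1$ compact sets $Q_1,\ldots,Q_{k+1}$ but misses all of their pairwise intersections, then $F$ must contain at least $k+1$ distinct points (one in each $Q_i\setminus\bigcup_{i\neq j}Q_i\cap Q_j$, and these are forced to be distinct because the $Q_i$'s overlap only inside the excluded region). The condition says such $F$ form a $\lambda$-null set. Running over all $(k+1)$-tuples should therefore pin the support of $\lambda$ onto sets of at most $k$ points.

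The key steps, in order, are as follows. First I would fix a closed set $F$ with $|F|\ge k+1$ and produce a witness $(k+1)$-tuple: pick distinct points $x_1,\ldots,x_{k+1}\in F$; since $R$ is Hausdorff and locally compact, choose pairwise disjoint compact neighborhoods $Q_i$ of $x_i$. Then $\bigcup_{i\neq j}Q_i\cap Q_j=\varnothing$, so $F$ lies in the null set $\{G: G\cap\bigcup_{i\neq j}Q_i\cap Q_j=\varnothing,\ G\cap Q_i\neq\varnothing\ \forall i\}$ — in fact this null set equals $\{G: G\cap Q_i\neq\varnothing\ \forall i\}$. Second, I would assemble a single null set covering all of $\mathcal{F}'\setminus\mathcal{P}_k'$ out of these local pieces. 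The natural move is to reduce to a countable family of $(k+1)$-tuples: one does not get to quantify over uncountably many null sets for free, so one needs either a separability-type argument on $R$ or — more in the spirit of this paper — an inner-regularity argument using that $\lambda$ is a Radon measure, showing that $\lambda(\mathcal{F}'\setminus\mathcal{P}_k')>0$ would force a compact subset of $\mathcal{F}'\setminus\mathcal{P}_k'$ of positive measure, then extracting from that compact set (via the Fell topology) a finite subcover by the null sets above. Third, I would conclude that $\lambda(\mathcal{F}'\setminus\mathcal{P}_k')=0$, i.e.\ $\lambda$ is supported by $\mathcal{P}_k'$, and note that $\mathcal{P}_k'$ is a Borel subset of $\mathcal{F}'$ so the statement ``supported by $\mathcal{P}_k'$'' is meaningful.

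The main obstacle I anticipate is precisely the passage from ``each $(k+1)$-tuple gives a null set'' to ``the union over all relevant configurations is null,'' since there is no separability assumption in force and an uncountable union of null sets need not be null. I expect the resolution to run through the Radon (inner regular) property of $\lambda$ together with compactness in the Fell topology: a putative positive-measure compact set $\mathcal{C}\subseteq\mathcal{F}'\setminus\mathcal{P}_k'$ is covered by the open (in the Fell topology) sets $U_{Q_1,\ldots,Q_{k+1}}:=\{G:G\cap Q_i^{\circ}\neq\varnothing\ \forall i\}$ over tuples of disjoint compacts with nonempty interiors, each contained in a null set, and compactness yields a finite subcover, contradicting $\lambda(\mathcal{C})>0$. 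A secondary, more routine point is verifying that the sets $U_{Q_1,\ldots,Q_{k+1}}$ are Fell-open and that the earlier null-set identity applies to them (it is stated for the closed sets $Q_i$, but monotonicity of $\lambda$ lets one pass from $Q_i^{\circ}$ to $Q_i$). Once the covering argument is in place the theorem follows immediately.
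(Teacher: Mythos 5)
Your proposal is correct, but it takes a genuinely different route from the paper's. The paper never argues directly on $\mathcal{F}'$: it first proves the abstract domain-theoretic results (Theorem~\ref{val.rep}, then Theorem~\ref{k.ca}) by constructing, for each finite sublattice $F$ of $\mathbb{K}$, a discrete approximating measure from the M\"{o}bius inverse of $\varphi$ with atoms placed at complements of sets of prime elements chosen via Proposition~\ref{abundant}, bounding the mass escaping $\mathcal{P}_k$ by successive differences that vanish under the $k$-valuation hypothesis (Lemmas~\ref{p.k.lemma} and~\ref{v.approx}), and passing to the limit with the approximation Theorems~\ref{s.approx} and~\ref{t.approx}; Theorem~\ref{val.2} is then read off through Theorem~\ref{hofmann.mislove} and Remark~\ref{p.k.rem}. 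You instead take the measure $\lambda$ of Theorem~\ref{choquet.1} as given, note that for pairwise disjoint $Q_1,\ldots,Q_{k+1}$ the $k$-valuation identity (with $Q=\varnothing$) forces $\lambda(\{F\in\mathcal{F}': F\cap Q_i\neq\varnothing,\ i=1,\ldots,k+1\})=0$, and promote these uncountably many null sets to $\lambda(\mathcal{F}'\setminus\mathcal{P}_k')=0$ by inner regularity plus a finite subcover by the Fell-open sets $U_{Q_1,\ldots,Q_{k+1}}$. This is sound: the $U_{Q_1,\ldots,Q_{k+1}}$ are finite intersections of subbasic Fell-open sets, every closed set with at least $k+1$ points lies in one of them (disjoint compact neighborhoods exist since $R$ is Hausdorff and locally compact), and consequently $\mathcal{F}'\setminus\mathcal{P}_k'$ is itself open (equivalently, $\mathcal{P}_k'$ is closed because $\mathcal{P}_k$ is compact, as shown in Section~\ref{val.sec}), which is the property you actually need --- the paper's Radon measures are inner regular on open sets, so ``Borel'' alone would not suffice at that step. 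As for what each approach buys: yours is shorter and more elementary, uses only the disjoint-tuple instance of the $k$-valuation condition, and pinpoints exactly where the Radon hypothesis enters (it would fail for the non-Radon Borel representations of Example~\ref{ex.nonunq}); the paper's heavier argument works on arbitrary distributive domains satisfying Assumption~\ref{k.ass}, where singletons must be replaced by prime elements and your geometric separation step is unavailable, and it delivers existence and uniqueness of the representation over $\mathcal{P}_k'$ (Theorem~\ref{k.ca}) rather than only the support statement.
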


This flavor of Choquet's theorem
is discussed in Section~\ref{val.sec}.
We should note that
the collection $\mathcal{P}_1' = \{\{a\}: a\in R\}$
is closed in $\mathcal{F}'$, and is homeomorphic to $R$.
A Radon measure on the LCH space $R$,
viewed as a capacity on $\mathcal{K}$,
is a valuation. The converse is also true,
which is a special case of Theorem~\ref{val.2} when $k=1$.

In the rest of Section~\ref{val.intro}
we assume further that the LCH space $R$ is $\sigma$-compact.
A nonnegative integer-valued random valuation $N(Q)$
of $Q\in\mathcal{K}$ on some probability space
$(\Omega,\mathcal{B},\mathbb{P})$
is called a \emph{point process}, which is said to be \emph{simple}
if $N(\{a\}) \le 1$ for every $a\in R$.
A closed subset $F$ is called \emph{locally finite}
if $F\cap Q$ is finite for every $Q\in\mathcal{K}$.
By $\mathcal{P}_{\mathrm{lf}}$ we denote
the collection of locally finite sets.

Now a simple point process $N$ is associated with
a random closed set $\xi$ taking its values
on $\mathcal{P}_{\mathrm{lf}}$, defined as $N(Q)=|\xi\cap Q|$
where $|\xi\cap Q|$ is the cardinality of the finite set $\xi\cap Q$.
Kurtz~\cite{kurtz} studied simple point processes
in terms of the avoidance functional $\varphi$,
and called $\varphi$ a \emph{zero probability function}
because it satisfies $\varphi(Q) = \mathbb{P}(N(Q)=0)$.

In order to explore representations over
$\mathcal{P}_{\mathrm{lf}}$,
he also introduced a natural extension of capacities
over the Borel $\sigma$-algebra,
and used arbitrary finite partitions of a
compact set $W\in\mathcal{K}$
by means of Borel-measurable subsets.
Since such an extension is not available for Scott
functions, we replace partitions with
opening-free partitions of subsemilattice antichain.
A finite subset $\mathcal{G}$ of $\mathcal{K}$ is called
a \emph{subsemilattice covering} of $W$
if (i) $Q_1\cap Q_2\in\mathcal{G}$
whenever $Q_1,Q_2\in\mathcal{G}$,
and (ii) $\bigcup\mathcal{G} = W$,
where the union is over all the elements of $\mathcal{G}$.
For any antichain $Q_1,\ldots,Q_k$ of
$\mathcal{G}' = \mathcal{G}\setminus\{\bigcap\mathcal{G}\}$
(i.e., $Q_i\not\subseteq Q_j$ for any pair $(Q_i, Q_j)$),
we set
\begin{equation*}
  O_{Q_1,\ldots,Q_k}
  = \bigcup\{Q\in\mathcal{G}: Q_i\not\subseteq Q,
  \mbox{ for all $i=1,\ldots,k$}\}
\end{equation*}
if the above collection for the union
contains at least one element $Q\in\mathcal{G}$;
otherwise, put $O_{Q_1,\ldots,Q_k}=\varnothing$.
We call it an \emph{opening}
if $Q_i\not\subseteq O_{Q_1,\ldots,Q_k}$ for $i=1,\ldots,k$
so that the disjoint sequence
\begin{equation*}
  Q_1\setminus O_{Q_1,\ldots,Q_k},\ldots,
  Q_k\setminus O_{Q_1,\ldots,Q_k}
\end{equation*}
partitions $W\setminus O_{Q_1,\ldots,Q_k}$.

Call an avoidance functional $\varphi$
a \emph{locally finite valuation}
if for any $\delta > 0$ and $W$ of $\mathcal{K}$
we can find a sufficiently large $n$ such that
for an arbitrary finite subsemilattice $\mathcal{G}$ covering $W$,
\begin{equation}\label{varphi.lfv}
  \varphi(W) + \sum\nabla_{Q_1,\ldots,Q_k}
  \varphi(O_{Q_1,\ldots,Q_k}) \ge 1 - \delta
\end{equation}
where the summation is over all antichains
$Q_1,\ldots,Q_k$ in $\mathcal{G}'$
such that $O_{Q_1,\ldots,Q_k}$ is an opening
and $k \le n$.

The condition (\ref{varphi.lfv}) roughly corresponds to that of
Theorem~2.13 of~\cite{kurtz} in terms of avoidance functional.
If $\varphi$ is nondegenerate
then (\ref{varphi.lfv}) is equivalently expressed in terms of
the hitting capacity $\Phi$ by
\begin{equation}
  - \sum\nabla_{Q_1,\ldots,Q_k}\Phi(O_{Q_1,\ldots,Q_k})
  \ge \Phi(W) - \delta
\end{equation}
In Section~\ref{lfv.sec} we characterize the
representation over $\mathcal{P}_{\mathrm{lf}}$
and obtain

\begin{theorem}\label{val.3}
If a hitting capacity $\Phi$ of Theorem~\ref{choquet.1}
is a locally finite valuation
then the corresponding Radon measure $\lambda$
uniquely determines the distribution of a simple point process.
\end{theorem}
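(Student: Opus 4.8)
\emph{Proof strategy.} The plan is to funnel the statement through one decisive support claim — that the Radon measure $\lambda$ is concentrated on $\mathcal{P}_{\mathrm{lf}}$ — and then to read off the theorem from the uniqueness half of Theorem~\ref{choquet.1}. Write $\varphi:=1-\Phi$ for the associated avoidance functional; after the usual normalization $\varphi(\varnothing)=1$ it is nondegenerate, so the measure $\mu$ of (\ref{avoidance}) coincides with $\lambda$, which is then a probability measure on $\mathcal{F}'$. Let $\xi$ be a random closed set with law $\lambda$. Granting the support claim, $\xi\in\mathcal{P}_{\mathrm{lf}}$ almost surely, so $N(Q):=|\xi\cap Q|$ is a nonnegative integer-valued random function of $Q\in\mathcal{K}$; it is a pathwise valuation by the elementary identity $|A|+|B|=|A\cap B|+|A\cup B|$ for finite sets $A,B$, it is continuous on the right because local finiteness lets one separate the finitely many points of $\xi$ lying near a given compact $E$ from $E$ by an open set, and $N(\{a\})=|\xi\cap\{a\}|\le1$; hence $N$ is a simple point process whose zero-probability functional is $Q\mapsto\mathbb{P}(N(Q)=0)=\lambda(\{F\in\mathcal{F}':F\cap Q=\varnothing\})=\varphi(Q)$. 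Conversely, any simple point process $N'$ with zero-probability functional $\varphi$ is associated, as in Section~\ref{val.intro}, with a random locally finite closed set $\xi'$ whose avoidance functional on $\mathcal{K}$ is again $\varphi$; by the uniqueness in Theorem~\ref{choquet.1} its law must be $\lambda$, so $N'$ and $N$ have the same distribution. Thus $\lambda$ uniquely determines the distribution of a simple point process, and everything reduces to the support claim.

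For the support claim, use $\sigma$-compactness: writing $R=\bigcup_mW_m$ with $W_m\in\mathcal{K}$ increasing, $\mathcal{F}'\setminus\mathcal{P}_{\mathrm{lf}}=\bigcup_m\{F:|F\cap W_m|=\infty\}$, so it suffices to show $\lambda(\{F:|F\cap W|=\infty\})=0$ for each fixed $W\in\mathcal{K}$ (the locally finite analogue of the support statement in Theorem~\ref{val.2}). Fix $W$ and $\delta>0$ and let $n$ be supplied by the locally finite valuation hypothesis. The first step is to decode the left-hand side of (\ref{varphi.lfv}) through the representation: for a subsemilattice covering $\mathcal{G}$ of $W$, the opening construction is arranged so that, across the antichains $Q_1,\ldots,Q_k$ of $\mathcal{G}'$ producing openings, the events
\begin{equation*}
  \{F\in\mathcal{F}':F\cap O_{Q_1,\ldots,Q_k}=\varnothing,\ F\cap Q_i\neq\varnothing\ \text{for }i=1,\ldots,k\}
\end{equation*}
are pairwise disjoint and, together with $\{F:F\cap W=\varnothing\}$, exhaust exactly the collection of $F$ meeting at most $n$ of the cells of $\mathcal{G}$; since $\nabla_{Q_1,\ldots,Q_k}\varphi(Q)=\mu(\{F:F\cap Q=\varnothing,\ F\cap Q_i\neq\varnothing\ \text{for }i\le k\})$, the hypothesis says precisely that $\lambda(A_{\mathcal{G}})\le\delta$ for a cofinal family of subsemilattice coverings $\mathcal{G}$ of $W$, where $A_{\mathcal{G}}:=\{F:F\cap W\text{ meets more than }n\text{ cells of }\mathcal{G}\}$.

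Now let $\mathcal{G}$ vary. If $|F\cap W|=\infty$ then, for each $N$, $F\cap W$ contains $N+1$ distinct points, which by Hausdorffness admit pairwise disjoint compact neighbourhoods; adjoining $W$ and $\varnothing$ to these produces a subsemilattice covering of $W$ that $F$ meets in more than $N$ cells. Hence $\{F:|F\cap W|=\infty\}$ is contained in the union, over subsemilattice coverings $\mathcal{G}$, of the sets $A_{\mathcal{G}}$, and this family is directed upward under refinement of $\mathcal{G}$. Since $\lambda$ is a finite Radon measure, one combines inner regularity — approximating $\{F:|F\cap W|=\infty\}$ from within by compact sets, and fattening each condition ``$F$ meets a cell'' to the open condition ``$F$ meets an open neighbourhood of one of the $N+1$ separating points'' — with the per-covering bound $\lambda(A_{\mathcal{G}})\le\delta$ to conclude $\lambda(\{F:|F\cap W|=\infty\})\le\delta$; letting $\delta\downarrow0$ establishes the support claim. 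The remaining verifications — measurability of $\omega\mapsto|\xi(\omega)\cap Q|$ and the Borel character of $\mathcal{P}_{\mathrm{lf}}$, both standard in the theory of random sets — are routine.

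I expect the main obstacle to be exactly this last transfer, from the per-covering bound to a statement about the genuine topological count $|F\cap W|$: it requires controlling $\lambda$ on a directed union of events indexed by the generally uncountable family of subsemilattice coverings, and since no separability is assumed this cannot be handled by a naive monotone-convergence argument — one must route the argument through compactness and open approximations, which is precisely where inner regularity of the Radon measure and the approximation theorems of Section~\ref{approx.sec} do the work. A secondary, purely combinatorial difficulty is the bookkeeping behind the identification of the left-hand side of (\ref{varphi.lfv}) with ``$F$ meets at most $n$ cells of $\mathcal{G}$'': one must verify both that the opening events are mutually disjoint and that, jointly with the void event, they leave nothing out, which is where the somewhat intricate definition of an opening is essential.
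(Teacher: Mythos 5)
Your proposal is correct in outline, but it establishes the crucial support claim by a genuinely different route from the paper. The paper never argues directly with $\lambda$ and the condition (\ref{varphi.lfv}): it first proves the abstract equivalence of Theorem~\ref{lfv.rep} (a completely monotone Scott function is a locally finite valuation in the sense of Definition~\ref{lfv.def} iff its representation is concentrated on the $F_{\sigma\delta}$-set $\mathcal{P}_{\mathrm{lf}}$ of (\ref{p.lf})), using the approximation machinery: the discrete measures $\mu_F$ of Lemma~\ref{v.approx} obtained by M\"obius inversion on finite sublattices, the lower bound of Corollary~\ref{w.approx}, weak$*$ limits of subnets indexed by sublattices $F'$ with $w_i\ll\bigwedge F'$, and uniqueness of the representation restricted to the subdomains $\llangle w_i\rrangle^*$; Proposition~\ref{bound.prop} then identifies Definition~\ref{lfv.def} with the concrete condition (\ref{varphi.lfv}), and Theorem~\ref{val.3} is read off exactly as you do, via $\varphi=1-\Phi$. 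You instead work directly with the Radon measure $\lambda$ on $\mathcal{F}'$ furnished by Theorem~\ref{choquet.1}: you decode the left side of (\ref{varphi.lfv}) as the $\lambda$-measure of a disjoint union of events (this is in substance Lemma~\ref{partition} combined with Proposition~\ref{bound.prop}), manufacture a subsemilattice covering from disjoint compact neighbourhoods of $n+1$ points of $F\cap W$, and finish with inner regularity, Fell-open fattening and compactness. Your argument is more elementary and self-contained for the theorem at hand, but it is tied to the concrete LCH setting where points of $R$ can be separated; the paper's detour through Theorem~\ref{lfv.rep} buys the statement (indeed an equivalence) at the level of general domains, where no points are available and prime elements plus the approximating nets must do the work.

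One step of your transfer should be made explicit, since it is where the argument could silently fail: after extracting a finite subcover $U_{F_1},\ldots,U_{F_m}$ of a compact $C\subseteq\{F:|F\cap W|=\infty\}$, summing the per-covering bounds gives only $m\delta$. You must instead pass to the single common refinement $\mathcal{G}^*$ generated (under intersections) by $\mathcal{G}_{F_1}\cup\cdots\cup\mathcal{G}_{F_m}$ and check $U_{F_j}\subseteq A_{\mathcal{G}^*}$ for every $j$, so that one application of the hypothesis to $\mathcal{G}^*$ yields $\lambda(C)\le\delta$. This does hold: a closed set hitting all $n+1$ pairwise disjoint cells of $\mathcal{G}_{F_j}$ meets $O_B$ for every antichain $B$ of $(\mathcal{G}^*)'$ with $|B|\le n$, because each nonempty $Q\in B$ is contained in at most one of those disjoint cells, so some cell lies inside $O_B$; your phrase ``directed upward under refinement'' gestures at this, but the verification is the crux. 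Two minor repairs: choose the $n+1$ points in the interior of a member of the exhaustion (local finiteness need only be tested on $\mathrm{int}(W_m)$), so that the separating compact neighbourhoods are genuine cells inside $W$ containing the open sets used in the fattening; and note that disjointness of the opening events is genuinely needed to convert (\ref{varphi.lfv}) into $\lambda(A_{\mathcal{G}})\le\delta$, so that bookkeeping, which you flag, cannot be skipped.
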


For the last part of Section~\ref{val.intro}
we generally assume that $\Phi$ is unbounded.
If $\Phi$ is completely alternating
then the conjugate $\varphi(Q) = \exp[-\Phi(Q)]$
is completely monotone (Lemma~\ref{levy.lem}).
The converse is not always true even
if $\varphi$ is strictly positive.
Hence, $\Phi$ is called the \emph{L\'{e}vy exponent}
if $\Phi(Q) = -\log\varphi(Q)$ is completely alternating.
In Section~\ref{levy.exp} we demonstrate that 
the L\'{e}vy exponent has a probabilistic interpretation
analogous to that of L\'{e}vy-Khinchin formula
(cf.~\cite{berg,bertoin}),
and that
it is sufficiently characterized by the conjugate
$\varphi$ being infinitely divisible
(Proposition~\ref{inf.div}).

A point process $N$ is said to be a \emph{general Poisson process}
if there exists a Radon measure $\lambda$ on $R$
satisfying for any sequence $Q_1,\ldots,Q_k$
of disjoint compact subsets of $R$
\begin{equation}\label{pois.gen}
\mathbb{P}(N(Q_i) = n_i,\,i=1,\ldots,k)
= \prod_{i=1}^k e^{-\lambda(Q_i)}\frac{[\lambda(Q_i)]^{n_i}}{n_i!} ,
\end{equation}
where $\lambda$ is called a parameter measure; see~\cite{daley}.
It should be noted that the parameter measure $\lambda$
could be atomic with positive measure $\lambda(\{a\})$ for some
singleton~$\{a\}$,
therefore the general Poisson process $N$ may not be
simple.

\begin{theorem}\label{pois.thm}
A conjugate $\varphi$ is a zero probability function
of a general Poisson process
if and only if it is strictly positive and
satisfies
\begin{displaymath}
  \varphi(Q_1)\varphi(Q_2)
  = \varphi(Q_1\cup Q_2)\varphi(Q_1\cap Q_2)
\end{displaymath}
for every pair $Q_1,Q_2$ of $\mathcal{K}$.
\end{theorem}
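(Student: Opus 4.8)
The plan is to prove the two implications separately. For the \emph{only if} direction, suppose $\varphi(Q) = \mathbb{P}(N(Q) = 0)$ for a general Poisson process $N$ with parameter measure $\lambda$. Specialising~(\ref{pois.gen}) to a single set ($k=1$, $n_1 = 0$) gives $\varphi(Q) = e^{-\lambda(Q)}$ for every $Q \in \mathcal{K}$; since $\lambda$ is Radon we have $\lambda(Q) < \infty$, so $\varphi$ is strictly positive, and the modularity $\lambda(Q_1 \cup Q_2) + \lambda(Q_1 \cap Q_2) = \lambda(Q_1) + \lambda(Q_2)$ of $\lambda$ yields
\begin{equation*}
  \varphi(Q_1 \cup Q_2)\,\varphi(Q_1 \cap Q_2)
  = e^{-[\lambda(Q_1\cup Q_2)+\lambda(Q_1\cap Q_2)]}
  = e^{-[\lambda(Q_1)+\lambda(Q_2)]}
  = \varphi(Q_1)\,\varphi(Q_2),
\end{equation*}
which is the asserted functional equation.

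For the \emph{if} direction, let $\varphi$ be a strictly positive conjugate with the stated multiplicative property; after normalising we may assume $\varphi(\varnothing) = 1$. Put $\Phi(Q) = -\log\varphi(Q)$. Then $\Phi$ is nonnegative, increasing, and vanishes at $\varnothing$, and it is continuous on the right: near any $E \in \mathcal{K}$ the values of $\varphi$ stay bounded away from $0$ (as $\varphi(E) > 0$ and $\varphi$ is decreasing), so $-\log$ is Lipschitz there and transfers the right-continuity of $\varphi$ to $\Phi$. Hence $\Phi$ is a capacity, and the hypothesis becomes
\begin{equation*}
  \Phi(Q_1) + \Phi(Q_2) = \Phi(Q_1 \cup Q_2) + \Phi(Q_1 \cap Q_2)
  \qquad (Q_1, Q_2 \in \mathcal{K}),
\end{equation*}
i.e.\ $\Phi$ is a valuation, hence completely alternating and a $1$-valuation. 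By Theorem~\ref{choquet.1} and the $k=1$ instance of Theorem~\ref{val.2} --- the identification of valuations with Radon measures on $R$ --- there is a Radon measure $\lambda$ on $R$ with $\Phi(Q) = \lambda(Q)$, equivalently $\varphi(Q) = e^{-\lambda(Q)}$, for every $Q \in \mathcal{K}$.

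It remains to realise $\lambda$ as the parameter measure of an actual general Poisson process. Since $R$ is $\sigma$-compact and $\lambda$ is Radon, $\lambda$ is $\sigma$-finite; decomposing $R$ into an increasing sequence of compact sets and superposing independent Poisson processes built on the resulting finite-measure pieces (each a Poisson-distributed number of i.i.d.\ points with the appropriately normalised restriction of $\lambda$) produces a point process $N$ satisfying~(\ref{pois.gen}) for every finite disjoint family in $\mathcal{K}$, with $\mathbb{P}(N(Q) = 0) = e^{-\lambda(Q)} = \varphi(Q)$; thus $\varphi$ is its zero probability function. The one step carrying real technical weight is this last construction --- checking that the superposition is a bona fide locally finite point process with the prescribed finite-dimensional laws --- but it is the classical construction of Poisson processes on $\sigma$-finite spaces (cf.~\cite{daley}) and introduces no new ideas, everything upstream being bookkeeping once $\Phi = -\log\varphi$ is recognised as a valuation and the $k=1$ case of Theorem~\ref{val.2} is invoked.
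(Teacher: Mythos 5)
Your proposal is correct and follows essentially the same route as the paper: recognizing $\Phi=-\log\varphi$ as a (conjugate) valuation, invoking the $k=1$ case of Theorem~\ref{val.2} (the paper phrases this via Proposition~\ref{pois.prop} and Theorem~\ref{k.ca}, i.e.\ the L\'{e}vy exponent language, but it is the same identification of valuations with Radon measures on $R$), and then constructing the general Poisson process by superposing Poisson numbers of i.i.d.\ points over a $\sigma$-compact exhaustion, which the paper likewise treats as a routine classical construction. The converse direction (strict positivity and the multiplicative identity for $e^{-\lambda(Q)}$) matches the paper as well.
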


We call a conjugate $\varphi$ of Theorem~\ref{pois.thm}
an \emph{exponential valuation}.
The proof of Theorem~\ref{pois.thm}
is presented at the end of Section~\ref{levy.exp}.

\subsection{Separability and the assertion of uniqueness}
\label{nonsep.intro}

The following example is the classical case
handled by Choquet's original paper~\cite{choquet},
and separability may not be an entirely natural restriction.

\begin{example}\label{ex.nonsep}
Let $R_0 = [0,1]$ be equipped with the discrete topology,
and let $R_1 = [0,1]$ be the standard Euclidean metric space.
Then the product topology $R=R_0\times R_1$ is LCH,
but not second-countable; see Example~\ref{nonsep.ex}.
Let $\pi_1$ be the canonical projection from $R$ to $R_1$,
and let $\nu$ be the standard Lebesgue measure on $R_1$.
Introduce a capacity $\Phi$
over the family $\mathcal{K}$ of compact subsets of $R$
by setting
$\Phi(Q) = \nu(\pi_1(Q))$ for $Q\in\mathcal{K}$.
This capacity is normalized, satisfies
\begin{equation*}
  \nabla_{Q_1,\ldots,Q_n}\Phi(Q)
  = \nu(\pi_1(Q)) - \nu\left(
  \pi_1(Q)\cup\bigcap_{i=1}^n\pi_1(Q_i)
  \right);
\end{equation*}
and thus, completely alternating.
Let $\mathcal{R}_1 = \{\pi_1^{-1}(\{x_1\}):x_1\in R_1\}$
be a collection of closed subsets in $R$.
Then $\mathcal{R}_1$ can be shown to be a compact subspace of
$\mathcal{F}'$ in the Fell topology, homeomorphic to $R_1$.
We can introduce a measure $\lambda$ on $\mathcal{R}_1$ by
setting
\begin{equation*}
  \lambda\left(\{\pi_1^{-1}(\{x_1\}):x_1\in B\}\right)
  = \nu(B)
\end{equation*}
for any Borel measurable subset $B$ of $R_1$,
and view it as a measure on $\mathcal{F}'$ supported by
$\mathcal{R}_1$.
One can show that $\lambda$ is the Radon measure of
Theorem~\ref{choquet.1}, establishing that
$\Phi$ represents a hitting capacity over $\mathcal{R}_1$.
\end{example}

The approach taken by Math\'{e}ron~\cite{matheron} and
Norberg~\cite{norberg} used Carath\'{e}odory's method of
construction,
and proved the unique existence of Borel measure~$\lambda$ on
$\mathcal{F}'$ satisfying~(\ref{hitting})
when the space $R$ is second-countable.
Therefore, the existence of $\lambda$ in Example~\ref{ex.nonsep}
does not follow from their versions of Choquet theorem.
Ross~\cite{ross} applied essentially the same approach
without separability, and built a measure on $\mathcal{F}$
satisfying~(\ref{hitting}) for $Q\in\mathcal{V}$ with a different
choice of space $\mathcal{V}$.
Although the conditions for the pair of
$\mathcal{V}$ and $\mathcal{F}$
are less restrictive, they require that
$\mathcal{F}$ contain all the complements $Q^c$ for
$Q\in\mathcal{V}$; thus, excluding the pair of
$\mathcal{K}$ and $\mathcal{F}$
handled in Example~\ref{ex.nonsep}.

Theorem~\ref{choquet.1} establishes the uniqueness of
$\lambda$ over Radon measures,
that is, over Borel measures which are inner regular on all open
sets and outer regular on all Borel sets
(see, e.g., Folland~\cite{folland} in the setting of LCH spaces).
If the space $R$ is second-countable,
so is $\mathcal{F}'$ (Remark~\ref{separable}).
Consequently our results include those of Matheron
and Norberg,
since Borel and Radon measures coincide for second-countable LCH
spaces.
The example below, however, demonstrates that the representation
$\lambda$ of
Theorem~\ref{choquet.1} may not be unique over Borel measures
for an $R$ that is not second-countable.

\begin{example}\label{ex.nonunq}
Let $R$ be the LCH space of Example~\ref{ex.nonsep},
and let $C_c(R)$ be the space of continuous functions on $R$ with
compact support.
If $f\in C_c(R)$ then $f(x_0,\cdot)\equiv 0$ for all but finitely
many $x_0\in R_0$.
By the Riesz theorem we can construct a Radon measure $\lambda$ on
$R$ satisfying
$\int f\,d\lambda = \sum_{x_0\in R_0}\int_0^1f(x_0,x_1)\,dx_1$
for $f\in C_c(R)$,
and define an unbounded capacity $\Phi$ by setting
$\Phi(Q) = \lambda(Q)$ for $Q\in\mathcal{K}$.
This capacity is apparently a valuation, and it has the obvious
representation $\lambda$ supported by $\mathcal{P}_1'$.
We can choose an open subset $A = R_0\times(R_1\setminus\{0\})$,
and define a Borel measure $\tilde{\lambda}$ by
setting $\tilde{\lambda}(B) = \lambda(A\cap B)$ for any
Borel-measurable subset $B$ of $R$.
Then $\tilde{\lambda}\neq\lambda$,
and $\Phi(Q) = \tilde{\lambda}(Q)$
for every $Q\in\mathcal{K}$.
Thus, $\tilde{\lambda}$ is a distinct representation,
though it is not a Radon measure;
see, e.g., Folland~\cite{folland}.
\end{example}

It may be worth pointing out that one may naively hope to demonstrate
nonuniqueness of such Borel measures by constructing
a measure on $R$ endowed with the discrete topology. Alas, no Borel measures
exist on $R$ with such a topology (at least assuming the Continuum Hypothesis (CH),
see~\cite{dudley} for a simple proof).

The measure in the example above may be made finite (and the space compact) if one is willing
to make some extra set-theoretic assumptions (such as the existence of measurable cardinals).
While the construction is straightforward, we omit it here for the sake of brevity.

It should be emphasized that the original results of
Choquet~\cite{choquet} asserted the unique existence of Radon
measure, not requiring the space $R$ to be separable.
Our treatise of Choquet theorems over domains
could lend further weight to the justification of unified Radon
measure approach
as Tjur presented so enthusiastically in his book~\cite{tjur}.

\section{Domains and topologies}
\label{domains}

Let $\mathbb{L}$ be a poset equipped with a partial order
$\le$.
A nonempty subset $E$ of $\mathbb{L}$ is called \emph{directed}
if every pair of elements in $E$ has an upper bound in $E$,
and $\mathbb{L}$ is called a \emph{directed complete poset} (or, \emph{dcpo} for
short)
if $\sup E$ exists for any directed subset $E$.
A subset $E$ is called a \emph{lower} set
if $x\in E$ and $y \le x$ imply $y \in E$,
and a lower set $E$ is called an \emph{ideal}
if it is also directed.
In particular, the ideal
$\langle x \rangle := \{z: z \le x\}$
generated by an element $x$
is called \emph{principal}.
Assuming that $\mathbb{L}$ is a dcpo,
an element $x$ of $\mathbb{L}$ is said to be ``way below'' $y$,
denoted by $x \ll y$,
if for every directed set $E \subseteq \mathbb{L}$ that satisfies
$y \le \sup E$ one can find $w \in E$ such that $x \le w$.
An element $x$ is called \emph{isolated from below}
if $x \ll x$.
A dcpo $\mathbb{L}$ is called a \emph{domain}
if (i) $\llangle x \rrangle := \{z: z \ll x\}$ is an ideal
and (ii) it satisfies $x = \sup\llangle x \rrangle$
for any $x\in \mathbb{L}$.
Every domain possesses
the \emph{strong interpolation property}:
If $x \ll z$ and $z \neq x$ then
there exists some $y \neq x$ interpolating $x \ll y \ll z$.

Throughout this paper
we frequently use \cite{scott}
as a standard reference on domain theory and generally follow the
notation introduced therein.
One notable exception is our choice of notation
$\langle\cdot\rangle$ and $\llangle\cdot\rrangle$
for generators
(which in \cite{scott} are denoted as $\downarrow\!\!\cdot$ and
$\twoheaddownarrow\cdot$, respectively).
A poset with the reversed (or ``dual'') order relation
$\le^*$ is referred to as a \emph{dual} poset, denoted by $\mathbb{L}^*$.
A subset is called \emph{filtered} in $\mathbb{L}$
if it is directed in $\mathbb{L}^*$.
An \emph{upper} set of $\mathbb{L}$
is dually defined as a lower set of $\mathbb{L}^*$,
and a filtered upper set is simply called a \emph{filter}.
In the analogous manner
we write
$\langle x \rangle^* := \{z: x \le z\}$
and
$\llangle x \rrangle^* := \{z: x \ll z\}$
(which are denoted $\uparrow\!\!{x}$ and
$\twoheaduparrow{x}$, respectively, in \cite{scott}).
For a subset $A$ we can write
$\langle A \rangle := \{z: z \le x \mbox{ for some $x\in A$ }\}$
and
$\llangle A \rrangle := \{z: z \ll x \mbox{ for some $x\in A$ }\}$.
Again, analogously we can define
$\langle A \rangle^*$ and $\llangle A \rrangle^*$.

\begin{definition}\label{topology.def}
Let $\mathbb{L}$ be a domain.
Then a subset $U$ of $\mathbb{L}$ is said to be \emph{Scott open} 
if (i) it is an upper set and (ii) $U\cap E \neq\varnothing$
holds whenever $E$ is a directed subset and
satisfies $\sup E \in U$.
A refinement can be made by introducing
additional closed upper sets $\langle x \rangle^*$,
thus defining a \emph{Lawson topology} on $\mathbb{L}$.
A subbase of the Lawson topology is formed by
all the Scott open subsets $U$
and all the lower subsets of the form
$\mathbb{L}\setminus\langle x \rangle^*$.
\end{definition}

Scott topology is $T_0$;
specifically, if $x\not\le y$ then there is a Scott open set $U$ such
that $x\in U$ and $y\not\in U$. On the other hand it is not Hausdorff
in general as the following quick example shows.
The real line $(-\infty, \infty)$
is a domain in which
$x \ll y$ is equivalent to $x < y$,
and the Scott topology consists of
open intervals $(x,\infty)$ unbounded above,
while the Lawson topology is the standard metric one.

A poset $\mathbb{L}$ is said to be a \emph{semilattice}
if $x\wedge y := \inf\{x,y\}$,
called the \emph{meet}, exists for every pair $\{x,y\}$.
Similarly we can define a \emph{sup-semilattice}
if $x \vee y := \sup\{x,y\}$, called the \emph{join},
exists for every pair $\{x,y\}$.
A poset is called a \emph{lattice}
if both the meet and the join exist for every pair,
and it is said to be a \emph{complete lattice}
if both the supremum and the infimum exist for every subset of $\mathbb{L}$.
A domain $\mathbb{L}$ is called a \emph{continuous sup-semilattice}
if it is a sup-semilattice.
It is called a \emph{continuous lattice}
if it is a complete lattice.
For example, a half-closed interval $(0, 1]$
is a continuous sup-semilattice.
It is also a lattice, but not a continuous lattice.
By $\mathrm{Scott}(\mathbb{L})$
we denote the family of Scott open subsets in a domain $\mathbb{L}$.
The poset $\mathrm{Scott}(\mathbb{L})$ ordered by inclusion
is a continuous lattice,
in which $U \ll V$ if
$U \subseteq \langle A \rangle^*$ holds
for some finite subset $A$ of $V$.

If a sup-semilattice $\mathbb{L}$ is a dcpo 
then it would suffice to check
$x = \sup\llangle x \rrangle$
in order to see whether it is a domain, 
or equivalently, to find some $z \ll x$
with $z \not\le y$ whenever $x \not\le y$.
A continuous sup-semilattice $\mathbb{L}$ is unital,
containing the top element $\hat{1} := \sup \mathbb{L}$.
If it also has the bottom element $\inf \mathbb{L}$
then it becomes a continuous lattice.
Regardless of whether
there exists a bottom element or not,
we can always form a continuous lattice,
denoted by $\check{\mathbb{L}} := \mathbb{L}\cup\{\hat{0}\}$,
by adjoining a bottom element $\hat{0}$.
Equipped with the Lawson topology,
a continuous sup-semilattice $\mathbb{L}$ is LCH,
and $\check{\mathbb{L}}$ can be viewed as the one-point compactification of
$\mathbb{L}$
(cf.\ Theorem III-1.9 of \cite{scott}).

By $\mathrm{OFilt}(\mathbb{L})$ we denote the semilattice
of Scott open filters ordered by inclusion.
Assuming that a domain $\mathbb{L}$ is a semilattice,
the way-below relation $\ll$ is said to be \emph{multiplicative}
if $a \wedge b \ll x \wedge y$ holds
whenever $a \ll x$ and $b \ll y$.
Lawson~\cite{lawson} proved that
if a domain $\mathbb{L}$ is a semilattice with multiplicative way-below relation
that has a top element $\hat{1}$ satisfying $\hat{1}\ll\hat{1}$
then $\mathrm{OFilt}(\mathbb{L})$
is a continuous lattice with the bottom element $\{\hat{1}\}$.

\begin{assumption}\label{k.ass}
In the rest of the paper we will consider
a domain $\mathbb{K}$ which is also a lattice with the top element $\hat{1}$,
and assume that the way-below relation satisfies
Lawson's conditions for the continuous lattice of open filters,
that is, (i) it is multiplicative, and (ii) $\hat{1} \ll \hat{1}$.
\end{assumption}

Now $\mathbb{K}$ is a continuous sup-semilattice
(although not necessarily a continuous lattice),
and $\mathrm{OFilt}(\mathbb{K})$ is a continuous lattice.

\begin{example}\label{class.k}
The family $\mathcal{K}$ of compact subsets of an LCH space $R$
is a domain and a lattice with reverse inclusion.
Here we have $E \ll F$ in $\mathcal{K}$
if and only if $F \subseteq \mathrm{int}(E)$
(cf.~Proposition~I-1.24.2 of \cite{scott}),
which is multiplicative.
The top element $\varnothing$ is isolated from below,
and therefore, Assumption~\ref{k.ass} holds for $\mathcal{K}$.
It should be noted
that $\mathcal{K}$ is a continuous lattice if the
entire space $R$ itself is compact,
and that each connected compact component of $R$, if any, is isolated
from below.
Let $\mathcal{F}$ denote the class of closed sets in $R$.
The Lawson topology of $\mathcal{K}$ is formed by a subbase
consisting of
$\mathcal{K}_F = \{Q\in\mathcal{K}: Q\cap F=\varnothing\}$,
$F\in\mathcal{F}$, and
$\mathcal{K}\setminus\langle{E}\rangle^*
= \{Q\in\mathcal{K}: Q\cap E^c\neq\varnothing\}$,
$E\in\mathcal{K}$.
\end{example}

The class $\mathcal{F}$ of closed subsets of $R$
is a continuous lattice with reverse inclusion,
in which $E \ll F$ if and only if there exists
$Q\in\mathcal{K}$ such that $E\cup Q = R$ and $F\cap Q =
\varnothing$
(cf. Section~III-1 of~\cite{scott}).
The Lawson topology on $\mathcal{F}$ is also known as the Fell topology
(see~\cite{scott,matheron}),
with a subbase consisting of
$\mathcal{F}_Q = \{F\in\mathcal{F}: F\cap Q=\varnothing\}$,
$Q\in\mathcal{K}$,
and
$\mathcal{F}\setminus\langle{E}\rangle^*
= \{F\in\mathcal{F}: F\cap E^c\neq\varnothing\}$,
$E\in\mathcal{F}$.
The next result is a version of Hofmann-Mislove
theorem; see~\cite{scott}.

\begin{theorem}\label{hofmann.mislove}
The map $\Psi(F) = \mathcal{K}_F$
is a homeomorphism from $F\in\mathcal{F}$ to
$\Psi(F)\in\mathrm{OFilt}(\mathcal{K})$.
\end{theorem}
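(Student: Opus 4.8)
The plan is to verify that $\Psi(F) = \mathcal{K}_F = \{Q \in \mathcal{K} : Q \cap F = \varnothing\}$ is (a) a Scott open filter in $\mathcal{K}$ for every closed $F$, (b) that $\Psi$ is a bijection onto $\mathrm{OFilt}(\mathcal{K})$, and (c) that $\Psi$ and $\Psi^{-1}$ are continuous for the Lawson topologies on both sides. Since $\mathcal{K}$ is ordered by reverse inclusion, an upper set is one closed under passing to compact \emph{subsets}, and a filter is one that is also closed under finite unions (the meet in $\mathbb{K}$ being union). So step (a) amounts to: if $Q \cap F = \varnothing$ and $Q' \subseteq Q$ then $Q' \cap F = \varnothing$ (trivial); if $Q_1 \cap F = Q_2 \cap F = \varnothing$ then $(Q_1 \cup Q_2) \cap F = \varnothing$ (trivial); and Scott openness: if $E \subseteq \mathcal{K}$ is directed (i.e.\ filtered under inclusion, so the $Q$'s form a downward-directed family of compact sets) with $\sup E = \bigcap E \in \mathcal{K}_F$, i.e.\ $(\bigcap E) \cap F = \varnothing$, we must produce $Q \in E$ with $Q \cap F = \varnothing$. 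This is exactly the classical compactness argument: $\{Q \cap F : Q \in E\}$ is a downward-directed family of compact sets with empty intersection, hence some member is already empty.

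For the bijection in step (b), I would show $\Psi$ is injective by noting that $F$ can be recovered from $\mathcal{K}_F$ as $R \setminus \bigcup\{\mathrm{int}(E) : E \in \mathcal{K}_F\}$, using that $R$ is LCH so every point of $F^c$ has a compact neighborhood inside $F^c$, and that $E \ll F'$ iff $F' \subseteq \mathrm{int}(E)$ from Example~\ref{class.k}. Surjectivity is the substantive direction (the Hofmann--Mislove content): given a Scott open filter $\mathcal{U} \subseteq \mathcal{K}$, set $F := R \setminus \bigcup\{\mathrm{int}(Q) : Q \in \mathcal{U}\}$, a closed set, and argue $\mathcal{U} = \mathcal{K}_F$. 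The inclusion $\mathcal{U} \subseteq \mathcal{K}_F$ is easy; for the reverse, take $Q$ with $Q \cap F = \varnothing$, so $Q \subseteq \bigcup\{\mathrm{int}(Q') : Q' \in \mathcal{U}\}$; by compactness of $Q$ finitely many $\mathrm{int}(Q'_1),\ldots,\mathrm{int}(Q'_n)$ cover $Q$, and since $\mathcal{U}$ is a filter, $Q'_1 \cup \cdots \cup Q'_n \in \mathcal{U}$; but this union has interior containing $Q$, i.e.\ $Q'_1 \cup \cdots \cup Q'_n \ll Q$ in $\mathcal{K}$, and since $\mathcal{U}$ is Scott open (hence an upper set) and $Q$ lies above $Q'_1 \cup \cdots \cup Q'_n$, we get $Q \in \mathcal{U}$.

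For step (c) I would match subbases. The Lawson topology on $\mathcal{F}$ has subbasic sets $\mathcal{F}_Q$ and $\mathcal{F} \setminus \langle E\rangle^*$ (Example~\ref{class.k} and the discussion after). On $\mathrm{OFilt}(\mathcal{K})$, a continuous lattice by Assumption~\ref{k.ass}, the Lawson subbase consists of Scott open families and complements of principal upper sets, where $\mathcal{V} \ll \mathcal{W}$ in $\mathrm{OFilt}(\mathcal{K})$ means $\mathcal{V} \subseteq \langle A\rangle^*$ for a finite $A \subseteq \mathcal{W}$, i.e.\ $\mathcal{V} \subseteq \mathcal{K}_{F'}$ with $F'$ the closed set forced by finitely many members of $\mathcal{W}$. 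Unwinding, the condition $\mathcal{K}_{F_1} \supseteq \mathcal{K}_{F_2}$ corresponds to $F_1 \subseteq F_2$, so $\Psi$ is an order anti-isomorphism matching reverse inclusion with reverse inclusion; and one checks that $\Psi$ sends $\mathcal{F}_Q = \{F : Q \in \mathcal{K}_F\}$ to the subbasic Scott open set $\{\mathcal{U} : Q \in \mathcal{U}\}$ in $\mathrm{OFilt}(\mathcal{K})$, and sends $\mathcal{F} \setminus \langle E\rangle^*$ (for $E \in \mathcal{F}$, where one may reduce to $E \in \mathcal{K}$ by approximation, or argue directly) to a complement of a principal upper set. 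The main obstacle I anticipate is precisely this topological identification: keeping the order-reversal bookkeeping straight while verifying that the way-below relation of $\mathrm{OFilt}(\mathcal{K})$ translates correctly under $\Psi$ into the ``$E \ll F$ via some $Q$'' relation on $\mathcal{F}$, so that the complementary Lawson subbasic sets correspond. The compactness arguments in (a) and (b) are standard; the care is all in (c).
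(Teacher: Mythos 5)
Your proposal is correct and follows essentially the same route as the paper: surjectivity via $F = R\setminus\bigcup_{Q\in\mathcal{U}}\mathrm{int}(Q)$ with the compactness-plus-filter argument, and the homeomorphism by matching Lawson subbases, i.e.\ $\Psi(\mathcal{F}_Q)=\{\mathcal{V}:Q\in\mathcal{V}\}$ and $\Psi(\mathcal{F}\setminus\langle E\rangle^*)=\mathrm{OFilt}(\mathcal{K})\setminus\{\mathcal{V}:\Psi(E)\subseteq\mathcal{V}\}$. Your step (a) (that $\mathcal{K}_F$ is a Scott open filter) is an extra check the paper leaves implicit, and in step (c) no reduction of $E\in\mathcal{F}$ to compact $E$ is needed, since $F\subseteq E$ is equivalent to $\Psi(E)\subseteq\Psi(F)$ directly for closed $E$.
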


\begin{proof}
The map $\Psi$ is clearly injective.
Let $\mathcal{V}\in\mathrm{OFilt}(\mathcal{K})$ be fixed arbitrarily,
and let $F = R\setminus\cup_{Q\in\mathcal{V}} Q$.
Since $\cup_{Q\in\mathcal{V}} Q = \cup_{Q\in\mathcal{V}}\mathrm{int}(Q)$,
we find $F\in\mathcal{F}$.
If $Q\in\mathcal{V}$ then $Q\cap F=\varnothing$.
If $E\in\Psi(F)$ then
$E\subseteq\cup_{Q\in\mathcal{V}}\mathrm{int}(Q)$,
therefore $E \subseteq Q$ for some $Q\in\mathcal{V}$.
Thus we obtain $\mathcal{V} = \Psi(F)$,
and consequently, $\Psi$ is bijective.

Moreover,
$\Psi(\mathcal{F}_Q) = \{\mathcal{V}\in\mathrm{OFilt}(\mathcal{K}):
Q\in\mathcal{V}\}$,
$Q\in\mathcal{K}$,
and
$\Psi(\mathcal{F}\setminus\langle{E}\rangle^*)
= \mathrm{OFilt}(\mathcal{K})\setminus
\{\mathcal{V}\in\mathrm{OFilt}(\mathcal{K}):
\Psi(E)\subseteq\mathcal{V}\}$,
$E\in\mathcal{F}$,
form a subbase for $\mathrm{OFilt}(\mathcal{K})$,
which implies that $\Psi$ is a homeomorphism.
\end{proof}

\begin{remark}
The top element $\varnothing$ of the continuous lattice $\mathcal{F}$ is
not isolated from below unless $R$ is compact,
and therefore, the domain $\mathcal{F}$ does not satisfy
Assumption~\ref{k.ass}.
\end{remark}

\begin{definition}\label{scott.def}
By $\mathcal{S}$
we denote the lattice
$\mathrm{Scott}(\mathbb{K})\setminus\{\varnothing\}$
of nonempty Scott open subsets in $\mathbb{K}$
ordered by inclusion.
We view $\mathrm{Scott}(\mathbb{K})$ as
an extension of $\mathcal{S}$
by adjoining the bottom element $\varnothing$,
and denote it by $\check{\mathcal{S}}$.
By $\mathcal{F}$
we denote the lattice
$\mathrm{OFilt}(\mathbb{K})$ of Scott open filters;
there should be no confusion with
the class $\mathcal{F}$ of closed sets
in light of Theorem~\ref{hofmann.mislove}.
\end{definition}

The lattice $\mathcal{S}$
itself becomes a continuous lattice
with the bottom element $\{\hat{1}\}$,
and it becomes a compact Hausdorff space when equipped with the
Lawson topology.
For $x\in \mathbb{K}$ we can define a filter
$\mathcal{S}_x := \{U\in\mathcal{S}: x \in U\}$.
Given a directed subset $\mathcal{E}$ of $\mathcal{S}$ satisfying
$\sup\mathcal{E} \in \mathcal{S}_x$,
we can find some $U \in \mathcal{E}$ which contains an $x$
such that $U \in \mathcal{S}_x$;
thus, $\mathcal{S}_x$ is Scott-open.
The collection of $\mathcal{S}_x$, $x\in \mathbb{K}$,
becomes an open subbase for the Scott topology of
$\mathcal{S}$;
in fact, it forms a base since $\mathbb{K}$ is sup-semilattice.

We can view $\mathcal{F}$ as a base
for the Scott topology of $\mathbb{K}$,
therefore we can express
a principal filter $\langle U \rangle^*_{\mathcal{S}}
:= \{W\in\mathcal{S}: U \subseteq W\}$
on $\mathcal{S}$
as the intersection of
$\langle V \rangle^*_{\mathcal{S}}$,
$V\in\mathcal{F}$ with $V \subseteq U$.
Thus, the Lawson topology of $\mathcal{S}$
is formed by a subbase consisting of
$\mathcal{S}_x$, $x\in \mathbb{K}$, and
$\mathcal{S}\setminus\langle V \rangle^*_{\mathcal{S}}$,
$V\in\mathcal{F}$.

\begin{lemma}\label{f.closed}
$\mathcal{F}$ is a closed subset of the compact Hausdorff space
$\mathcal{S}$.
\end{lemma}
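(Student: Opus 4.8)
The plan is to avoid showing directly that the complement of $\mathcal{F}=\mathrm{OFilt}(\mathbb{K})$ is Lawson open in $\mathcal{S}$ (where ``failing to be a filter'' is not readily an open condition) and instead to exhibit $\mathcal{F}$ as the continuous image of a compact space sitting inside the Hausdorff space $\mathcal{S}$. Recall that, under Assumption~\ref{k.ass}, $\mathrm{OFilt}(\mathbb{K})$ is a continuous lattice with bottom element $\{\hat{1}\}$ (Lawson's theorem; cf.~\cite{lawson,scott}), so, equipped with its own Lawson topology, it is compact and Hausdorff, exactly as $\mathcal{S}$ is. Let $\iota\colon \mathrm{OFilt}(\mathbb{K})\to\mathcal{S}$ be the inclusion: this is well defined because every Scott open filter is nonempty and therefore lies in $\mathcal{S}=\mathrm{Scott}(\mathbb{K})\setminus\{\varnothing\}$, and its range is precisely $\mathcal{F}$. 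I will show that $\iota$ is continuous from the Lawson topology of $\mathrm{OFilt}(\mathbb{K})$ to the Lawson topology of $\mathcal{S}$; then $\mathcal{F}=\iota(\mathrm{OFilt}(\mathbb{K}))$ is compact, and a compact subset of the Hausdorff space $\mathcal{S}$ is closed.

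To prove continuity it suffices to check that $\iota^{-1}$ sends each member of a subbase of the Lawson topology of $\mathcal{S}$ to an open subset of $\mathrm{OFilt}(\mathbb{K})$. Such a subbase was described above: the sets $\mathcal{S}_x$, $x\in\mathbb{K}$, together with the sets $\mathcal{S}\setminus\langle V\rangle^*_{\mathcal{S}}$, $V\in\mathrm{OFilt}(\mathbb{K})$. On the one hand, $\iota^{-1}(\mathcal{S}_x)=\{W\in\mathrm{OFilt}(\mathbb{K}):x\in W\}$ is an upper set in $\mathrm{OFilt}(\mathbb{K})$; moreover a directed union of Scott open filters is again a Scott open filter (it is a union of Scott open sets, hence Scott open; it is filtered, since any two of its elements already lie in a common member of the directed family; and it is nonempty), so directed suprema in $\mathrm{OFilt}(\mathbb{K})$ are computed as unions, and consequently $x\in\sup\mathcal{E}$ forces $x\in W$ for some $W\in\mathcal{E}$. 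Hence $\iota^{-1}(\mathcal{S}_x)$ is Scott open in $\mathrm{OFilt}(\mathbb{K})$, a fortiori Lawson open. On the other hand, for $V\in\mathrm{OFilt}(\mathbb{K})$ one has $\iota^{-1}(\langle V\rangle^*_{\mathcal{S}})=\{W\in\mathrm{OFilt}(\mathbb{K}):V\subseteq W\}=\langle V\rangle^*$, the principal upper set of $V$ within the poset $\mathrm{OFilt}(\mathbb{K})$, which by Definition~\ref{topology.def} is one of the subbasic closed sets for the Lawson topology on $\mathrm{OFilt}(\mathbb{K})$; thus $\iota^{-1}(\mathcal{S}\setminus\langle V\rangle^*_{\mathcal{S}})=\mathrm{OFilt}(\mathbb{K})\setminus\langle V\rangle^*$ is Lawson open. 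This establishes the continuity of $\iota$.

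It then follows that $\mathcal{F}=\iota(\mathrm{OFilt}(\mathbb{K}))$ is the image of a compact space under a continuous map, hence compact, and therefore closed in the Hausdorff space $\mathcal{S}$. As a byproduct, $\iota$ is a continuous bijection from a compact space onto $\mathcal{F}\subseteq\mathcal{S}$, hence a homeomorphism onto $\mathcal{F}$, so the subspace topology that $\mathcal{F}$ inherits from $\mathcal{S}$ coincides with the intrinsic Lawson topology of $\mathrm{OFilt}(\mathbb{K})$. The only routine verification needed along the way is the identity $\sup\mathcal{E}=\bigcup\mathcal{E}$ for directed families $\mathcal{E}$ in $\mathrm{OFilt}(\mathbb{K})$, used to see that $\iota^{-1}(\mathcal{S}_x)$ is Scott open; the one conceptual point is simply to realize that compactness of $\mathrm{OFilt}(\mathbb{K})$ should be exploited rather than trying to build Lawson-open neighbourhoods of non-filters inside $\mathcal{S}$ by hand.
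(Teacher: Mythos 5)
Your proof is correct and follows essentially the same route as the paper: both exploit that $\mathrm{OFilt}(\mathbb{K})$ is a continuous lattice, hence Lawson-compact, and compare its intrinsic Lawson topology with the topology induced from $\mathcal{S}$, concluding closedness from compactness plus Hausdorffness. The only difference is one of detail: the paper asserts outright that the induced and intrinsic topologies coincide on the subbases, whereas you verify continuity of the inclusion subbasic set by subbasic set and recover the coincidence afterwards as a byproduct of the compact-to-Hausdorff argument.
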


\begin{proof}
A subbase of the Lawson topology of $\mathcal{F}$ consists of
$\mathcal{F}_x = \{V\in\mathcal{F}:x\in V\}$, $x\in \mathbb{K}$,
and
$\mathcal{F}\setminus\langle V \rangle^*_{\mathcal{F}}$,
$V\in\mathcal{F}$,
which coincides with the topology induced
by the Lawson topology of $\mathcal{S}$.
Since $\mathcal{F}$ is a continuous lattice,
$\mathcal{F}$ is compact in the Lawson topology,
and therefore, it is closed in $\mathcal{S}$.
\end{proof}

\section{Scott functions and their representations}
\label{radon.measure}

A map $f$ from a domain $\mathbb{L}$ to another domain $\mathcal{L}$
is called \emph{Scott-continuous}
if it is continuous in their respective Scott topologies.
Recall that the domains of interest satisfy Assumption~\ref{k.ass},
and that their Lawson topologies are LCH.
Thus, we call $f$ simply ``continuous''
(not \emph{Lawson-continuous})
if $f$ is continuous with respect to the Lawson topology.
Scott-continuity implies monotonicity, and the
following equivalent conditions hold
(Proposition II-2.1 of \cite{scott}).

\begin{proposition}\label{scott}
Suppose $f$ maps some domain $\mathbb{L}$ to another domain $\mathcal{L}$.
Then the following statements are equivalent:
\begin{enumerate}
\item
$f$ is Scott-continuous;
\item
$f(x) = \sup f(E)$
whenever $E$ is a directed subset of $\mathbb{L}$
converging to $x$;  
\item
$f(x) = \sup f(\llangle x\rrangle)$ for every $x\in \mathbb{L}$.
\end{enumerate}
\end{proposition}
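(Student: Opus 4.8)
\emph{Proof proposal.} The plan is to prove the cycle (i) $\Rightarrow$ (ii) $\Rightarrow$ (iii) $\Rightarrow$ (i). The middle implication costs nothing: in a domain $\llangle x\rrangle$ is by definition an ideal, hence a directed set with supremum $x$, so (ii) applied to $E=\llangle x\rrangle$ is precisely the assertion (iii). The substance is therefore in the other two implications, and in each the real content is a single step descending from a supremum to a witnessing element below it.

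For (i) $\Rightarrow$ (ii) I would use that a Scott-continuous map is monotone (recorded just above the proposition). Fix a directed $E\subseteq\mathbb{L}$ with $\sup E=x$. By monotonicity $f(E)$ is directed and bounded above by $f(x)$, so $y:=\sup f(E)$ exists in the dcpo $\mathcal{L}$ and $y\le f(x)$; the reverse inequality is the point. Since $\langle y\rangle$ is a lower set closed under directed suprema it is Scott closed, so $U:=\mathcal{L}\setminus\langle y\rangle$ is Scott open; if $f(x)\not\le y$ then $x=\sup E$ lies in the Scott open set $f^{-1}(U)$, hence some $e\in E$ satisfies $f(e)\in U$, i.e.\ $f(e)\not\le\sup f(E)$ --- absurd. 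Hence $f(x)=\sup f(E)$.

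For (iii) $\Rightarrow$ (i) I would first extract monotonicity from (iii) directly: $x\le y$ gives $\llangle x\rrangle\subseteq\llangle y\rrangle$, whence $f(x)=\sup f(\llangle x\rrangle)\le\sup f(\llangle y\rrangle)=f(y)$. Let $U\subseteq\mathcal{L}$ be Scott open and put $V:=f^{-1}(U)$; monotonicity together with $U$ being an upper set makes $V$ an upper set. If $E$ is directed with $\sup E=x\in V$, then $f(x)=\sup f(\llangle x\rrangle)\in U$; since $\llangle x\rrangle$ is an ideal, $f(\llangle x\rrangle)$ is directed, so the Scott open $U$ already meets it, giving $z\ll x$ with $f(z)\in U$. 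The way-below relation now does the descending: $z\ll x=\sup E$ yields $e\in E$ with $z\le e$, and then $f(e)\in U$ by monotonicity and $U$ being an upper set, so $E\cap V\neq\varnothing$. Thus $V$ is Scott open and $f$ is Scott-continuous.

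I do not anticipate a genuine obstacle --- this is essentially Proposition~II-2.1 of \cite{scott} recast in the present notation --- but the two steps to watch are exactly the descents back to an individual element: in (i) $\Rightarrow$ (ii) via Scott-closedness of the principal lower set $\langle y\rangle$, and in (iii) $\Rightarrow$ (i) via the defining property of $\ll$ applied to $z\ll\sup E$. Both rest squarely on the standing hypotheses that $\mathbb{L}$ and $\mathcal{L}$ are domains (so $\llangle x\rrangle$ is an ideal with supremum $x$) and dcpos (so the directed suprema invoked genuinely exist).
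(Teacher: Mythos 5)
Your proposal is correct: the paper offers no proof of its own here, simply citing Proposition~II-2.1 of \cite{scott}, and your cycle (i)\,$\Rightarrow$\,(ii)\,$\Rightarrow$\,(iii)\,$\Rightarrow$\,(i) --- using Scott-closedness of the principal lower set $\langle y\rangle$ for the first implication and the defining property of $\ll$ applied to $z\ll\sup E$ for the last --- is exactly the standard textbook argument being invoked. All the individual steps (monotonicity from Scott-continuity, directedness of monotone images, and the two ``descent'' steps you flag) check out against the paper's definitions.
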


When a real-valued function $f$ on a domain $\mathbb{L}$
is considered, $f$ is Scott-continuous
if and only if $f$ is increasing
[i.e.\ $f(x)\le f(y)$ whenever $x\le y$]
and lower semi-continuous (l.s.c)
on $\mathbb{L}$ equipped with the Scott topology.

\begin{definition}\label{scott.fun}
Let $\varphi$ and $\Phi$ be nonnegative functions on $\mathbb{K}$.
Then we call $\varphi$ a \emph{Scott function}
if $\varphi$ is Scott-continuous from $\mathbb{K}$ to
$[0,\infty)$,
and call $\Phi$ a \emph{conjugate Scott function}
(or, simply a \emph{conjugate})
if $\Phi$ is Scott-continuous from $\mathbb{K}$
to $[0,\infty)^*$
(i.e., to the dual poset of $[0,\infty)$).
\end{definition}

A Scott function $\varphi$ is increasing
and bounded, while
a conjugate $\Phi$ is decreasing, and not necessarily bounded.
Without loss of generality
we set $\varphi(\hat{1}) = 1$ and $\Phi(\hat{1}) = 0$
throughout this paper.
Observe that
an increasing (or a decreasing) nonnegative function
$\psi$ is a Scott function (a conjugate Scott function, respectively)
if for any $x\in \mathbb{K}$ and
$\varepsilon > 0$ there exists some element $z \ll x$ such that
$|\psi(x)-\psi(y)| < \varepsilon$ whenever $z \ll y \le x$.
In Section~\ref{choquet.intro}
a conjugate functional $\varphi$ over $\mathcal{K}$
corresponds to a Scott function on the domain $\mathcal{K}$ of
Example~\ref{class.k},
and a capacity $\Phi$ over $\mathcal{K}$
to a conjugate Scott function on $\mathcal{K}$.

Let $\xi$ be a Scott-continuous map from $(0,1]$
to the continuous lattice $\mathcal{S}$.
Then $\xi$ is a Borel-measurable map from $(0,1]$
to the compact Hausdorff space $\mathcal{S}$.
By the Riesz theorem there exists a unique Radon measure $\mu$
on $\mathcal{S}$
which corresponds to the positive functional
\begin{equation}\label{pos.func}
I(f) = \int_0^1 f(\xi(r)) dr = \int f d\mu
\end{equation}
over the space $C(\mathcal{S})$
of continuous functions on $\mathcal{S}$.

\begin{lemma}\label{nu.cons}
Let $\nu$ be the standard Lebesgue measure on $(0,1]$,
and let $\mathcal{U}$ be a Scott open subset of $\mathcal{S}$.
Then the Radon measure $\mu$ of (\ref{pos.func}) satisfies
$\mu(\mathcal{U}) = \nu(\xi^{-1}(\mathcal{U}))$.
\end{lemma}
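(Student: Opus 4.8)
The plan is to establish the identity $\mu(\mathcal{U}) = \nu(\xi^{-1}(\mathcal{U}))$ for Scott open $\mathcal{U}$ by a regularity/approximation argument, exploiting the fact that $\mu$ is determined by its values on continuous functions through~(\ref{pos.func}). First I would recall that $\mathcal{S}$ is a continuous lattice equipped with the Lawson topology, hence compact Hausdorff, so $\mu$ is a genuine Radon measure; in particular $\mu$ is inner regular with respect to compact sets and outer regular with respect to open sets, and a Scott open set $\mathcal{U}$ is Lawson open, so $\mu(\mathcal{U})$ is well defined. The key observation is that a Scott open subset $\mathcal{U}$ of the continuous lattice $\mathcal{S}$ can be written as a directed union $\mathcal{U} = \bigcup_i \langle V_i\rangle^*_{\mathcal{S}}$ of principal filters $\langle V\rangle^*_{\mathcal{S}} = \{W : V \subseteq W\}$ with $V \ll \mathcal{U}$ (using $\mathcal{S} = \sup\llangle\cdot\rrangle$ and the description of $\ll$ on $\mathrm{Scott}(\mathbb{K})$ recalled in Section~\ref{domains}), and by Scott-continuity of $\xi$ the preimage $\xi^{-1}(\mathcal{U})$ is itself open in $(0,1]$, so $\nu(\xi^{-1}(\mathcal{U}))$ makes sense and equals $\sup_i \nu(\xi^{-1}(\langle V_i\rangle^*_{\mathcal{S}}))$.

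Next I would reduce to showing the identity for indicator-like continuous functions. Because $\xi$ is Scott-continuous, the function $r \mapsto \mathbf{1}_{\mathcal{U}}(\xi(r))$ is the monotone (l.s.c.) indicator of an open subset of $(0,1]$, and I can approximate $\mathbf{1}_{\mathcal{U}}$ from below by an increasing sequence of continuous functions $f_n \uparrow \mathbf{1}_{\mathcal{U}}$ on $\mathcal{S}$ (Urysohn, using compact-inside-open pairs $\mathcal{C}_n \subseteq \mathcal{U}$ with $\mu(\mathcal{U}\setminus\mathcal{C}_n)\to 0$ by inner regularity). For each such $f_n$, equation~(\ref{pos.func}) gives $\int_0^1 f_n(\xi(r))\,dr = \int f_n\,d\mu$. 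I then let $n\to\infty$: the right-hand side converges to $\mu(\mathcal{U})$ by monotone convergence, while the left-hand side converges to $\int_0^1 \mathbf{1}_{\mathcal{U}}(\xi(r))\,dr = \nu(\xi^{-1}(\mathcal{U}))$, again by monotone convergence applied to $f_n(\xi(r)) \uparrow \mathbf{1}_{\mathcal{U}}(\xi(r))$. This yields the claimed equality.

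The step I expect to be the main obstacle is controlling the \emph{lower} approximation cleanly: one wants continuous $f_n$ on the compact Hausdorff space $\mathcal{S}$ with $f_n \uparrow \mathbf{1}_{\mathcal{U}}$ \emph{pointwise on all of $\mathcal{S}$}, not merely $\mu$-a.e., so that the left-hand integrand $f_n(\xi(r))$ also increases pointwise to $\mathbf{1}_{\mathcal{U}}(\xi(r))$ and monotone convergence applies there too. The clean way around this is to use the directed-union structure $\mathcal{U} = \bigcup \langle V_i\rangle^*_{\mathcal{S}}$ noted above: each $\langle V_i\rangle^*_{\mathcal{S}}$ is itself Scott open, and one shows directly that $\mu(\langle V_i\rangle^*_{\mathcal{S}}) = \nu(\xi^{-1}(\langle V_i\rangle^*_{\mathcal{S}}))$ — here the outer/inner regularity of $\mu$ on the compact Hausdorff space and the continuity of the maps $W \mapsto \mathbf{1}_{\langle V\rangle^*}(W)$ from the right are enough to produce a genuine pointwise-increasing sequence of continuous functions — and then passes to the directed supremum over $i$ on both sides, using that both $\mu$ and $\nu\circ\xi^{-1}$ are measures and hence continuous along increasing unions. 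Everything else is a routine application of the Riesz representation and monotone convergence.
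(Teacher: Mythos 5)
Your upper bound is sound and is exactly the paper's: for any $f\in C(\mathcal{S})$ with $0\le f\le 1$ and $\mathrm{supp}f\subseteq\mathcal{U}$ one has $\int f\,d\mu=\int_0^1 f(\xi(r))\,dr\le\nu(\xi^{-1}(\mathcal{U}))$, and the Riesz inner formula for open sets gives $\mu(\mathcal{U})\le\nu(\xi^{-1}(\mathcal{U}))$; no pointwise increasing sequence $f_n\uparrow$ indicator is needed for that direction. The genuine gap is in your lower bound. Your reduction rests on the claims that $\mathcal{U}$ is a \emph{directed} union of the principal filters $\langle V\rangle^*_{\mathcal{S}}$ with $V\in\mathcal{U}$, that each such filter is Scott open, and that ``$\mu$ and $\nu\circ\xi^{-1}$ are continuous along increasing unions,'' so that $\mu(\mathcal{U})=\sup_{V\in\mathcal{U}}\mu(\langle V\rangle^*_{\mathcal{S}})$. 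None of these hold as stated: $\langle V\rangle^*_{\mathcal{S}}$ is Lawson closed and is Scott open only when $V\ll V$; the family $\{\langle V\rangle^*_{\mathcal{S}}:V\in\mathcal{U}\}$ is directed under inclusion only when $\mathcal{U}$ is filtered, which a Scott open set need not be; and measures are not continuous along uncountable directed unions of \emph{closed} sets (Radon regularity gives this only for directed unions of open sets). Worse, the identity $\mu(\mathcal{U})=\sup_{V\in\mathcal{U}}\mu(\langle V\rangle^*_{\mathcal{S}})$ is simply false for a general Radon measure on a continuous lattice: take a measure with two atoms at incomparable points of $\mathcal{U}$ whose infimum lies outside $\mathcal{U}$; then no $V\in\mathcal{U}$ lies below both atoms, so the supremum misses part of the mass. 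Hence this step cannot follow from regularity of $\mu$ alone; it must use that the particular $\mu$ of (\ref{pos.func}) is carried by the chain $\xi((0,1])$, which your argument never invokes on the $\mu$ side. As written, that step is essentially equivalent to the lemma itself.

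What is missing, and is the key point of the paper's proof, is the structural consequence of monotonicity and Scott-continuity of $\xi$: the preimage $\xi^{-1}(\mathcal{U})$ is a single interval $(r,1]$, so for any $\varepsilon>0$ the \emph{one} compact saturated set $\langle\xi(r+\varepsilon)\rangle^*\subseteq\mathcal{U}$ already contains $\xi(s)$ for every $s\ge r+\varepsilon$. Choosing by Urysohn an $f\in C(\mathcal{S})$ with $f\equiv 1$ on $\langle\xi(r+\varepsilon)\rangle^*$ and $\mathrm{supp}f\subseteq\mathcal{U}$ gives $I(f)\ge 1-r-\varepsilon$, hence $\mu(\mathcal{U})\ge 1-r-\varepsilon$ by the Riesz formula, and $\varepsilon\to 0$ together with your upper bound finishes the proof. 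So your plan can be repaired, but only by selecting the principal filters along the chain $\xi((0,1])$ (where a single one suffices up to $\varepsilon$) rather than ranging over all $V\in\mathcal{U}$ and appealing to a general continuity-of-measure principle that does not exist.
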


\begin{proof}
By the Riesz representation we have
\begin{equation*}
  \mu(\mathcal{U})
  = \sup\{I(f): f\in C(\mathcal{S}), 0\le f\le 1,
  \mathrm{supp}f\subseteq \mathcal{U}\} ,
\end{equation*}
where $\mathrm{supp} f$ denotes the support of $f$.
Thus, we find $\mu(\mathcal{U}) \le \nu(\xi^{-1}(\mathcal{U}))$.
Assuming $\xi^{-1}(\mathcal{U}) \neq \varnothing$,
we can write $\xi^{-1}(\mathcal{U}) = (r,1]$ for some $0 \le r < 1$.
By choosing an arbitrary $0 < \varepsilon < 1-r$,
we can construct an $f\in C(\mathcal{S})$ such that $f \equiv 1$ on 
the compact subset $\langle \xi(r+\varepsilon) \rangle^*$ of $\mathcal{U}$
and $\mathrm{supp} f \subseteq \mathcal{U}$
using a locally compact version of Urysohn's lemma
(cf. Section 4.5 of \cite{folland}).
By the Riesz representation we obtain
$\mu(\mathcal{U}) \ge 1 - r - \varepsilon$,
which completes the proof.
\end{proof}

Let $\varphi$ be a Scott function.
Then we can construct a Scott-continuous map $\xi(r) = \{z\in \mathbb{K}: \varphi(z) > 1-r\}$
from $(0,1]$ to $\mathcal{S}$,
where the Scott-continuity of $\xi$ is implied by
$\xi^{-1}(\mathcal{S}_x) = (1-\varphi(x),1]$.
By Lemma~\ref{nu.cons}
the corresponding Radon measure $\mu$ of (\ref{pos.func})
on $\mathcal{S}$ satisfies
$\mu(\mathcal{S}_x) = \nu(\xi^{-1}(\mathcal{S}_x)) = \varphi(x)$,
which shows the existence of $\mu$ for the following proposition.

\begin{proposition}\label{s.rep}
There exists a Radon measure $\mu$ on $\mathcal{S}$
that satisfies $\varphi(x) = \mu(\mathcal{S}_x)$
if and only if $\varphi$ is a Scott function.
\end{proposition}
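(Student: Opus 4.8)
The plan is to prove both directions of Proposition~\ref{s.rep}. The ``only if'' direction is essentially complete from the discussion preceding the statement: given a Scott function $\varphi$, one defines $\xi(r) = \{z\in\mathbb{K}:\varphi(z) > 1-r\}$, checks Scott-continuity of $\xi$ via $\xi^{-1}(\mathcal{S}_x) = (1-\varphi(x),1]$, invokes the Riesz theorem to obtain the Radon measure $\mu$ of~(\ref{pos.func}), and applies Lemma~\ref{nu.cons} to conclude $\mu(\mathcal{S}_x) = \nu(\xi^{-1}(\mathcal{S}_x)) = \varphi(x)$. I would just reproduce this chain of equalities compactly, making sure to note that $\xi(r)$ is indeed a \emph{nonempty} Scott open set (it contains $\hat 1$ since $\varphi(\hat 1) = 1 > 1-r$ for $r > 0$), so $\xi$ genuinely lands in $\mathcal{S}$ rather than $\check{\mathcal{S}}$.

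For the converse, suppose $\mu$ is a Radon measure on $\mathcal{S}$ and set $\varphi(x) = \mu(\mathcal{S}_x)$ for $x\in\mathbb{K}$. I must show $\varphi$ is a Scott function, i.e.\ nonnegative, increasing, and Scott-continuous (equivalently lower semicontinuous and monotone) from $\mathbb{K}$ to $[0,\infty)$. Nonnegativity is immediate. Monotonicity follows because $x\le y$ implies $\mathcal{S}_x\subseteq\mathcal{S}_y$: any Scott open $U$ containing $x$ is an upper set, hence contains $y$. Boundedness is clear since $\varphi(x)\le\mu(\mathcal{S})<\infty$. For Scott-continuity I would use Proposition~\ref{scott}(iii): fix $x\in\mathbb{K}$ and verify $\varphi(x) = \sup_{z\ll x}\varphi(z)$, i.e.\ $\mu(\mathcal{S}_x) = \sup_{z\ll x}\mu(\mathcal{S}_z)$. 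Since $\llangle x\rrangle$ is directed (Assumption~\ref{k.ass}), the sets $\mathcal{S}_z$ for $z\ll x$ form a directed (increasing) family, so by continuity of the measure from below it suffices to prove $\mathcal{S}_x = \bigcup_{z\ll x}\mathcal{S}_z$. The inclusion $\supseteq$ holds by monotonicity. For $\subseteq$: if $U\in\mathcal{S}_x$, i.e.\ $x\in U$ with $U$ Scott open, then since $x = \sup\llangle x\rrangle$ and $\llangle x\rrangle$ is directed, Scott-openness of $U$ gives some $z\in U$ with $z\ll x$, whence $U\in\mathcal{S}_z$. This establishes the identity and hence Scott-continuity.

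The step I expect to require the most care is confirming that $\bigcup_{z\ll x}\mathcal{S}_z$ is computed over a genuinely directed family so that $\sup$ of the measures equals the measure of the union — this rests on $\llangle x\rrangle$ being an ideal, which is exactly the defining property of a domain, and on the fact that unions of Scott open sets are Scott open. A secondary subtlety is that $\mu$ need only be finite on compact sets a priori; but since $\mathcal{S}$ is itself compact Hausdorff (it is a continuous lattice in the Lawson topology), any Radon measure on it is finite, so $\varphi$ is automatically bounded and no $\sigma$-finiteness gymnastics are needed. I would close by remarking that the two constructions are mutually inverse only in the sense of determining $\varphi$; uniqueness of the representing measure $\mu$ is not asserted here and is taken up separately.
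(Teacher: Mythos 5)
Your proposal is correct and follows essentially the paper's own route: the existence direction reuses the construction given just before the statement (the map $\xi(r)=\{z:\varphi(z)>1-r\}$, the Riesz functional of (\ref{pos.func}), and Lemma~\ref{nu.cons}), while your converse, via Proposition~\ref{scott} and $\mathcal{S}_x=\bigcup_{z\ll x}\mathcal{S}_z$, is the same idea the paper packages as the monotone convergence theorem for nets of l.s.c.\ indicators. The one point to state precisely is that ``continuity of the measure from below'' along the possibly uncountable directed family $\{\mathcal{S}_z\}_{z\ll x}$ is not mere sequential continuity but uses the Radon property --- inner regularity of $\mu$ on the open union, since any compact subset is contained in a single $\mathcal{S}_z$ by directedness --- which is exactly what the paper's citation of the net version of monotone convergence supplies.
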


\begin{proof}
It suffices to show that $\varphi(x) = \mu(\mathcal{S}_x)$
is Scott-continuous.
Suppose that $E$ is a directed subset of $\mathbb{K}$
converging to $x = \sup E$.
Then for any $U \in\mathcal{S}$
satisfying $\delta_x(U) = 1$
there exists some $z\in E$ such that
$\delta_z(U) = 1$.
Thus, we can view
$\{\delta_z\}_{z\in E}$ as an increasing net of l.s.c.~functions
on $\mathcal{S}$ converging to
$\delta_x = \sup_{z\in E} \delta_z$.
By applying the monotone convergence theorem for nets
(MCT for short; Proposition 7.12 of~\cite{folland}),
we obtain
\begin{equation*}
  \sup_{z\in E} \varphi(z)
  = \sup_{z\in E} \int\delta_z d\mu
  = \int\delta_x d\mu
  = \varphi(x),
\end{equation*}
which implies that $\varphi$ is Scott-continuous.
\end{proof}

\begin{remark}\label{km.theorem}
Proposition~\ref{s.rep} provides a representation
of a Scott function on the compact Hausdorff space $\mathcal{S}$.  
In the context of Krein-Milman theorem
the collection $\mathcal{C}_1$ of
Scott functions can be viewed as a compact convex space.
For each $U\in\mathcal{S}$
the indicator function $I_U$
[i.e., $I_U(x) = 1$ if $x\in U$; otherwise, $I_U(x) = 0$]
is a Scott function.
Moreover, $\mathcal{S}$ is naturally embedded onto $\mathcal{C}_1$
as the collection $\mathrm{ex}(\mathcal{C}_1)$ of
extreme points in $\mathcal{C}_1$.
Choquet~\cite{choquet}
and others (e.g.\ \cite{berg}) showed that
$\mathcal{C}_1$ is the closure of convex hull of
$\mathrm{ex}(\mathcal{C}_1)$, therefore an element
$\varphi\in\mathcal{C}_1$ is represented by the integral
$\ell(\varphi) =
\int_{\mathrm{ex}(\mathcal{C}_1)}\ell(\rho)d\mu(\rho)$
for any continuous linear functional $\ell$ on $\mathcal{C}_1$.
In particular, the measure $\mu$ on $\mathcal{S}$
satisfies
$\varphi(x) = \mu(\mathcal{S}_x)$, $x\in \mathbb{K}$.
In contrast the construction of $\mu$ in
Proposition~\ref{s.rep}
is elementary, and was first presented
by Murofushi and Sugeno in~\cite{murofushi}.
\end{remark}

\begin{lemma}\label{s.cover}
Let $\langle\llangle x\rrangle^*\rangle^*_{\mathcal{S}}
= \{V\in\mathcal{S}: \llangle x\rrangle^*\subseteq V\}$
be the closed upper subset of $\mathcal{S}$
generated by $\llangle x\rrangle^*\in\mathcal{S}$,
and let $\mathcal{S}' = \mathcal{S}\setminus\{\mathbb{K}\}$.
Then
\begin{math}
  \mathcal{S}\setminus
  \langle\llangle x\rrangle^*\rangle^*_{\mathcal{S}},
\end{math}
$x\in \mathbb{K}$, cover the LCH space $\mathcal{S}'$.
\end{lemma}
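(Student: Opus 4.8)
The plan is to reduce the covering assertion to the following pointwise statement: for every $V\in\mathcal{S}$ with $V\neq\mathbb{K}$ there is at least one $x\in\mathbb{K}$ with $\llangle x\rrangle^*\not\subseteq V$, which is exactly the condition $V\in\mathcal{S}\setminus\langle\llangle x\rrangle^*\rangle^*_{\mathcal{S}}$. So I would fix $V\in\mathcal{S}'$ and, since $V$ is a nonempty proper subset of $\mathbb{K}$, choose a point $y\in\mathbb{K}\setminus V$.

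The witness $x$ is then produced from the domain structure of $\mathbb{K}$: because $\mathbb{K}$ is a domain, $\llangle y\rrangle$ is an ideal, hence nonempty, so one may pick some $x\ll y$. Then $y\in\llangle x\rrangle^*$ while $y\notin V$, so $\llangle x\rrangle^*\not\subseteq V$; that is, $V\in\mathcal{S}\setminus\langle\llangle x\rrangle^*\rangle^*_{\mathcal{S}}$. As $V$ ranges over $\mathcal{S}'$ this shows the sets in question cover $\mathcal{S}'$. (Note that Scott-openness of $V$ is not even used here; only $V\subsetneq\mathbb{K}$ matters.)

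It is worth recording first why the sets in the statement are well-formed. The set $\llangle x\rrangle^*$ is Scott open (interpolation property of domains) and nonempty: since $x\le\hat{1}$ and $\hat{1}\ll\hat{1}$ by Assumption~\ref{k.ass}, one gets $x\ll\hat{1}$, so $\hat{1}\in\llangle x\rrangle^*$; hence $\llangle x\rrangle^*\in\mathcal{S}$ and $\langle\llangle x\rrangle^*\rangle^*_{\mathcal{S}}$ is a genuine Lawson-closed upper subset of the continuous lattice $\mathcal{S}$, while $\mathcal{S}'=\mathcal{S}\setminus\{\mathbb{K}\}$ is LCH as an open subspace of the compact Hausdorff space $\mathcal{S}$. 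I do not expect any real obstacle: the only substantive ingredient is the defining property of a domain, that $\llangle y\rrangle$ is always a nonempty directed set with supremum $y$, which is precisely what supplies an $x\ll y$ with $y\notin V$.
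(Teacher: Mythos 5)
Your proof is correct and is in substance the paper's own argument: the paper states that the decreasing net of closed sets $\langle\llangle x\rrangle^*\rangle^*_{\mathcal{S}}$, indexed by $x\in\mathbb{K}^*$, converges to the singleton $\{\mathbb{K}\}$, which is exactly your pointwise observation that any $V\in\mathcal{S}$ with $V\neq\mathbb{K}$ omits some $y$, and any $x\ll y$ (which exists since $\llangle y\rrangle$ is a nonempty ideal) then witnesses $\llangle x\rrangle^*\not\subseteq V$. Your preliminary remarks on well-formedness (e.g.\ $\hat{1}\ll\hat{1}$ giving $\hat{1}\in\llangle x\rrangle^*$, so $\llangle x\rrangle^*\in\mathcal{S}$) are also consistent with Assumption~\ref{k.ass}.
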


\begin{proof}
Let $\mathbb{K}^*$ be the dual poset of $\mathbb{K}$.
Then the net of the closed sets
$\langle\llangle x\rrangle^*\rangle^*_{\mathcal{S}}$
indexed by $x\in \mathbb{K}^*$
converges to the singleton $\{\mathbb{K}\}$.
Thus, their complements cover $\mathcal{S}'$.
\end{proof}

Let $\mu$ be the Radon measure of Proposition~\ref{s.rep}.
In the proof of Lemma~\ref{s.cover}
for any $x\in \mathbb{K}$
we can find some $y\ll x$ so that
$\langle\llangle y\rrangle^*\rangle^*_{\mathcal{S}}
\subseteq \mathcal{S}_x \subseteq
\langle\llangle x\rrangle^*\rangle^*_{\mathcal{S}}$,
therefore we obtain
\begin{math}
  \inf_{x\in \mathbb{K}}
  \mu\left(\langle\llangle x\rrangle^*\rangle^*_{\mathcal{S}}\right)
  = \inf_{x\in \mathbb{K}}
  \mu\left(\mathcal{S}_x\right).
\end{math}
By MCT we have
\begin{equation*}
  \inf_{x\in \mathbb{K}} \varphi(x)
  = \inf_{x\in \mathbb{K}}
  \mu\left(\langle\llangle x\rrangle^*\rangle^*_{\mathcal{S}}\right)
  = \mu(\{\mathbb{K}\});
\end{equation*}
thus, $\mu$ is non-atomic at $\mathbb{K}$ whenever $\inf_{x\in \mathbb{K}} \varphi(x) = 0$.

\begin{definition}\label{degen.def}
We call a Scott function $\varphi$ \emph{degenerate}
if $\inf_{x\in \mathbb{K}} \varphi(x) > 0$.
When $\varphi$ is nondegenerate,
the representation $\mu$
of Proposition~\ref{s.rep}
can be viewed as a Radon measure on
$\mathcal{S}'$.
\end{definition}

Let $\Phi$ be a conjugate Scott function,
and let $\mathcal{S}^x = \{U\in\mathcal{S}:x\not\in U\}$
be the complement of $\mathcal{S}_x$.
Observe that $\mathcal{S}_x$ is an open neighborhood of $\mathbb{K}$
in $\mathcal{S}$,
and therefore, that $\mathcal{S}^x$
is a compact subset of $\mathcal{S}'$.
If $\sup_{z\in \mathbb{K}}\Phi(z) < \infty$,
we may normalize $\Phi$ [i.e.\ require that $\sup_{z\in \mathbb{K}}\Phi(z) = 1$].
Then $\varphi(z) = 1 - \Phi(z)$ is a nondegenerate Scott
function,
and its representation $\mu$ of Proposition~\ref{s.rep}
is a Radon measure on $\mathcal{S}'$
satisfying $\Phi(x) = \mu(\mathcal{S}^x)$.
Suppose that $\sup_{z\in \mathbb{K}}\Phi(z) = \infty$.
Then the map $\xi(r) = \{z\in \mathbb{K}: \Phi(z) < r\}$
is Scott-continuous from $(0,\infty)$ to $\mathcal{S}'$.
By the Riesz theorem there is a Radon measure $\lambda$ on
$\mathcal{S}'$ satisfying
\begin{equation*}
  \int_0^{\infty} f(\xi(r)) dr = \int f d\lambda,
  \quad f \in C_c(\mathcal{S}'),
\end{equation*}
where $C_c(\mathcal{S}')$ denotes
the space of continuous functions on $\mathcal{S}'$
with compact support.
Similar to the proof of
Lemma~\ref{nu.cons} and Proposition~\ref{s.rep},
we can show that
$\lambda(\mathcal{S}^x)
= \nu(\xi^{-1}(\mathcal{S}^x))
= \nu([0,\Phi(x)]) = \Phi(x)$,
and that
$-\Phi(x) = -\lambda(\mathcal{S}^x)$
is Scott-continuous;
thus, establishing

\begin{proposition}\label{Phi.rep}
There exists a Radon measure $\lambda$ on $\mathcal{S}'$
that satisfies $\Phi(x) = \lambda(\mathcal{S}^x)$
if and only if
$\Phi$ is a conjugate Scott function on $\mathbb{K}$.
\end{proposition}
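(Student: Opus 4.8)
The plan is to prove Proposition~\ref{Phi.rep} by mirroring the structure already used for Proposition~\ref{s.rep}, splitting into the bounded and unbounded cases, since the existence direction has essentially been laid out in the paragraph preceding the statement and only the converse (Scott-continuity of $x\mapsto\lambda(\mathcal{S}^x)$) and a clean assembly remain.

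For the ``only if'' direction I would argue as follows. Suppose $\lambda$ is a Radon measure on $\mathcal{S}'$ with $\Phi(x)=\lambda(\mathcal{S}^x)$ for all $x\in\mathbb{K}$. Since $x\le y$ implies $\mathcal{S}_y\subseteq\mathcal{S}_x$ (every Scott open set containing $y$ need not contain $x$, but every one containing $x$ contains $y$ because Scott opens are upper sets), hence $\mathcal{S}^y\subseteq\mathcal{S}^x$ — wait, I must be careful with the direction: $x\le y$ gives $\mathcal{S}_x\subseteq\mathcal{S}_y$, so $\mathcal{S}^x\supseteq\mathcal{S}^y$, hence $\Phi(x)\ge\Phi(y)$, i.e. $\Phi$ is decreasing, as a conjugate should be. For Scott-continuity I use the characterization recorded after Proposition~\ref{scott}: a decreasing nonnegative function is a conjugate Scott function iff it is l.s.c.\ for the dual order, equivalently iff $\Phi(x)=\inf_{z\ll x}\Phi(z)$ for every $x$, equivalently $-\Phi(x)=\sup_{z\ll x}(-\Phi(z))$. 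Take a directed set $E$ with $\sup E=x$. Then $\{\mathcal{S}^z\}_{z\in E}$ is an increasing (by the monotonicity just established, since $z\le x$) family of compact subsets of $\mathcal{S}'$, and I claim $\bigcup_{z\in E}\mathcal{S}^z=\mathcal{S}^x$: the inclusion $\subseteq$ is monotonicity; for $\supseteq$, if $U\in\mathcal{S}^x$ then $x\notin U$, and since $U$ is Scott open and $E$ is directed with $\sup E=x$, were some $z\in E$ in $U$ we would have $x\in U$ by the upper-set property — so no $z\in E$ lies in $U$, giving $U\in\mathcal{S}^z$ for every $z\in E$, which actually shows $\mathcal{S}^x\subseteq\bigcap_z\mathcal{S}^z$, even stronger. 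Hmm, that over-shoots; I instead want the indicator functions $I_{\mathcal{S}^z}$ to increase to $I_{\mathcal{S}^x}$, but the computation shows they are all equal on $\mathcal{S}^x$. The correct move is to work with $\mathcal{S}_z$ directly: $I_{\mathcal{S}_z}\uparrow I_{\mathcal{S}_x}$ is false too. The honest route is: $I_{\mathcal{S}^z}$ for $z\in E$ form a \emph{decreasing} net with pointwise infimum $I_{\mathcal{S}^x}$ (if $U\in\mathcal{S}^x$ it is in every $\mathcal{S}^z$; if $U\notin\mathcal{S}^x$ then $x\in U$ so some $z\in E$ has $z\in U$, i.e.\ $U\notin\mathcal{S}^z$). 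Applying MCT (Proposition 7.12 of~\cite{folland}) in its decreasing form — legitimate here because $\mathcal{S}^{z_0}$ has finite $\lambda$-measure for any fixed $z_0\in E$, as it is compact in $\mathcal{S}'$ and $\lambda$ is Radon, and by monotonicity we may assume $E\subseteq\langle z_0\rangle^*$ — we get $\inf_{z\in E}\Phi(z)=\inf_{z\in E}\lambda(\mathcal{S}^z)=\lambda(\mathcal{S}^x)=\Phi(x)$, which is exactly Scott-continuity into $[0,\infty)^*$.

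For the ``if'' direction I would simply reproduce the two constructions already sketched before the statement: (a) if $\sup_z\Phi(z)<\infty$, normalize so $\sup_z\Phi(z)=1$, set $\varphi=1-\Phi$, invoke Proposition~\ref{s.rep} to get a Radon $\mu$ on $\mathcal{S}$ with $\varphi(x)=\mu(\mathcal{S}_x)$, note $\varphi$ is nondegenerate (indeed $\inf_x\varphi(x)=1-\sup_x\Phi(x)=0$) so by Definition~\ref{degen.def} $\mu$ lives on $\mathcal{S}'$, and then $\Phi(x)=1-\varphi(x)=\mu(\mathcal{S}')-\mu(\mathcal{S}_x)=\mu(\mathcal{S}^x)$ — taking $\lambda=\mu$; (b) if $\sup_z\Phi(z)=\infty$, use $\xi(r)=\{z\in\mathbb{K}:\Phi(z)<r\}$, which is Scott-continuous from $(0,\infty)$ to $\mathcal{S}'$ because $\xi^{-1}(\mathcal{S}_x)=(\Phi(x),\infty)$, invoke the Riesz theorem on $C_c(\mathcal{S}')$ to obtain $\lambda$, and repeat the Lemma~\ref{nu.cons}/Proposition~\ref{s.rep} argument verbatim to get $\lambda(\mathcal{S}^x)=\nu(\xi^{-1}(\mathcal{S}^x))=\nu((0,\Phi(x)])=\Phi(x)$. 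In case (b) I should remark that $\xi(r)$ is always a proper Scott open subset (never all of $\mathbb{K}$) because $\Phi(\hat1)=0<r$ forces nothing, but $\sup\Phi=\infty$ means for any $r$ there is $z$ with $\Phi(z)\ge r$, so $z\notin\xi(r)$ — hence $\xi(r)\in\mathcal{S}'$ as needed.

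The main obstacle is bookkeeping rather than depth: making sure the MCT application in the converse uses the \emph{decreasing} version correctly, which requires the finite-measure domination by some $\mathcal{S}^{z_0}$ — this is where compactness of $\mathcal{S}^x$ in $\mathcal{S}'$ (already observed just before the statement) together with Radon-ness of $\lambda$ does the work. A secondary subtlety is the normalization in case (a): one must check that after rescaling $\Phi$ by a constant the representing measure rescales by the same constant, so the final $\lambda$ represents the \emph{original} $\Phi$; this is immediate since $\mathcal{S}_x$ and $\mathcal{S}^x$ do not depend on $\Phi$. Everything else is a transcription of arguments already in hand.
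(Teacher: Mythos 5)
Your proposal is correct and takes essentially the same route as the paper: the existence direction reproduces the paper's own two-case construction (normalize and apply Proposition~\ref{s.rep} to $\varphi=1-\Phi$ when $\Phi$ is bounded; the Scott-continuous map $\xi(r)=\{z\in\mathbb{K}:\Phi(z)<r\}$ together with the Riesz theorem and the Lemma~\ref{nu.cons} argument when it is not), while your converse is just the paper's ``similar to Proposition~\ref{s.rep}'' step made explicit via the decreasing net $\{\mathcal{S}^z\}_{z\in E}$ of compact sets with intersection $\mathcal{S}^x$. One caution: a ``decreasing MCT'' for nets is false for general dominated measurable functions, so justify that step instead by compactness of the sets $\mathcal{S}^z$ and outer regularity of the Radon measure $\lambda$ (pick an open $\mathcal{U}\supseteq\mathcal{S}^x$ with $\lambda(\mathcal{U})\le\lambda(\mathcal{S}^x)+\varepsilon$; by compactness some $\mathcal{S}^z\subseteq\mathcal{U}$), which is exactly what your finiteness remark was reaching for.
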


\section{Approximation theorems}
\label{approx.sec}

As introduced in Section~\ref{domains}, the
Lawson topology of $\mathcal{S}$ is compact Hausdorff,
and therefore, normal.
Let $C(\mathcal{S})$ be the space of continuous functions on
$\mathcal{S}$,
and let $I_{\mathcal{S}_x}(U)$ be the indicator function of a
Scott-open subset $\mathcal{S}_x$ over $U\in\mathcal{S}$.
By applying Urysohn's lemma we obtain the following result.

\begin{lemma}\label{urysohn}
If $a \ll x$ then there exists $f\in C(\mathcal{S})$
such that
\begin{math}
  I_{\mathcal{S}_a}(U)\le f(U)\le I_{\mathcal{S}_x}(U)
\end{math}
for every $U\in\mathcal{S}$.
\end{lemma}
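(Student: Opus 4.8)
The plan is to exhibit the desired $f$ by composing a standard continuous Urysohn function on $\mathcal{S}$ with a suitable separation of the two Scott-open sets $\mathcal{S}_a$ and $\mathcal{S}_x$ by a closed set. First I would recall that $\mathcal{S}$ is compact Hausdorff in the Lawson topology (hence normal), so Urysohn's lemma applies once I produce a closed set $C$ with $\mathcal{S}_a \subseteq \mathrm{int}(C)$ (interior in the Lawson topology) and $C \subseteq \mathcal{S}_x$; then any $f \in C(\mathcal{S})$ with $f \equiv 1$ on $C$ and $\mathrm{supp}\,f \subseteq \mathcal{S}_x$ gives $I_{\mathcal{S}_a}(U) \le f(U) \le I_{\mathcal{S}_x}(U)$ for all $U \in \mathcal{S}$. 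The natural candidate is the closed upper set $C = \langle a \rangle^*_{\mathcal{S}} = \{U \in \mathcal{S} : a \in U\}$ — wait, that is just $\mathcal{S}_a$ again and it is Scott-open, not Lawson-closed; instead the right choice is a principal Lawson-closed upper set of $\mathcal{S}$, namely $C = \{U \in \mathcal{S} : W \subseteq U\}$ for an appropriately chosen $W \in \mathcal{S}$.

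The key step is to use the way-below relation $a \ll x$ in $\mathbb{K}$ to interpolate. By the strong interpolation property of the domain $\mathbb{K}$ (stated in Section~\ref{domains}), since $a \ll x$ I can find $b \in \mathbb{K}$ with $a \ll b \ll x$. Now consider the Scott-open set $W = \llangle b \rrangle^* = \{z : b \ll z\}$, which lies in $\mathcal{S}$ since it is a nonempty Scott-open filter, and note $b \in W$ so $a \ll b$ gives, via the way-below characterization in $\mathrm{Scott}(\mathbb{K})$ recalled in Section~\ref{domains}, the relation $\mathcal{S}_a \supseteq$ or $\subseteq$ the principal closed upper set generated by $W$. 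Concretely: the closed upper set $\langle W \rangle^*_{\mathcal{S}} = \{U \in \mathcal{S} : W \subseteq U\}$ satisfies $\langle W \rangle^*_{\mathcal{S}} \subseteq \mathcal{S}_b \subseteq \mathcal{S}_x$ (since $b \ll x$ implies $x \in \mathcal{S}_b$ is too weak — rather $U \ni b$ and $b \ll x$ do not directly give $x \in U$, so I must be careful here); the cleaner inclusion to chase is $\mathcal{S}_a \subseteq \langle \llangle a \rrangle^* \rangle^*_{\mathcal{S}}$ and, using $a \ll b$, that $\langle \llangle a \rrangle^* \rangle^*_{\mathcal{S}} \subseteq \mathcal{S}_b$, combined with $\mathcal{S}_b$ being Scott-open. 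So the chain is $\mathcal{S}_a \subseteq \langle \llangle a \rrangle^* \rangle^*_{\mathcal{S}} \subseteq \mathcal{S}_b \subseteq \langle \llangle b \rrangle^* \rangle^*_{\mathcal{S}} \subseteq \mathcal{S}_x$, with the middle term $\langle\llangle b\rrangle^*\rangle^*_{\mathcal{S}}$ serving as the Lawson-closed set $C$ sandwiched strictly between the Scott-open $\mathcal{S}_a$ and $\mathcal{S}_x$.

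With $C = \langle\llangle b\rrangle^*\rangle^*_{\mathcal{S}}$ Lawson-closed (it is a principal closed upper set, closed by definition of the Lawson topology) and $\mathcal{S}_x$ Lawson-open (Scott-open sets are Lawson-open) with $C \subseteq \mathcal{S}_x$, normality of the compact Hausdorff space $\mathcal{S}$ yields $f \in C(\mathcal{S})$ with $0 \le f \le 1$, $f \equiv 1$ on $C$, and $f \equiv 0$ off $\mathcal{S}_x$. Then for $U \in \mathcal{S}_a \subseteq C$ we get $f(U) = 1 \ge I_{\mathcal{S}_a}(U)$, for $U \notin \mathcal{S}_x$ we get $f(U) = 0 = I_{\mathcal{S}_x}(U)$, and in all cases $0 \le f(U) \le 1$, giving the sandwich $I_{\mathcal{S}_a} \le f \le I_{\mathcal{S}_x}$. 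The main obstacle is verifying the inclusion chain, i.e.\ that $a \ll b \ll x$ in $\mathbb{K}$ forces $\mathcal{S}_a \subseteq \langle\llangle b\rrangle^*\rangle^*_{\mathcal{S}} \subseteq \mathcal{S}_x$; this rests on the elementary fact that if $a \ll b$ then every Scott-open set containing $a$ contains $\llangle b \rrangle^*$ — which follows because $U$ Scott-open with $a \in U$, $a \ll b$, and $b = \sup\llangle b\rrangle^*$ (as $\llangle b\rrangle^*$ is, up to directedness of $\llangle\cdot\rrangle$, cofinal below $b$) forces $\llangle b\rrangle^* \subseteq U$ — and I would spell this out carefully using Proposition~\ref{scott}(iii) and the definition of Scott-open sets.
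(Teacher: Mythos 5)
Your proof is correct and takes essentially the same route as the paper: a Lawson-closed principal upper set of $\mathcal{S}$ generated by a way-below filter is sandwiched between the Scott-open sets $\mathcal{S}_a$ and $\mathcal{S}_x$, and Urysohn's lemma is applied on the compact Hausdorff (hence normal) space $\mathcal{S}$. The only difference is that your interpolation $a\ll b\ll x$ is superfluous: the paper uses $\langle\llangle a\rrangle^*\rangle^*_{\mathcal{S}}$ directly, since any upper set $U$ containing $a$ satisfies $\llangle a\rrangle^*\subseteq\langle a\rangle^*\subseteq U$ and $a\ll x$ gives $x\in\llangle a\rrangle^*$, so $\mathcal{S}_a\subseteq\langle\llangle a\rrangle^*\rangle^*_{\mathcal{S}}\subseteq\mathcal{S}_x$ already holds.
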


\begin{proof}
Observe that the closed set
$\langle\llangle a\rrangle^*\rangle^*_{\mathcal{S}}$
satisfies
\begin{math}
  \mathcal{S}_a\subseteq
  \langle\llangle a\rrangle^*\rangle^*_{\mathcal{S}}
  \subseteq\mathcal{S}_x .
\end{math}
By Urysohn's lemma there exists
$f\in C(\mathcal{S})$
such that
$0\le f\le 1$,
$f(U)=0$ for $U\not\in\mathcal{S}_x$,
and $f(U)=1$ for
$U\in\langle\llangle a\rrangle^*\rangle^*_{\mathcal{S}}$,
as desired.
\end{proof}

We consider the collection $M^+(\mathcal{S})$
of Radon measures on $\mathcal{S}$,
and equip it with the weak$*$ topology,
in which the convergence $\mu = \lim_\alpha \mu_\alpha$ of Radon
measures is characterized by
$\int f d\mu = \lim_\alpha \int f d\mu_\alpha$
for every $f \in C(\mathcal{S})$.
It should be noted that
$M^+(\mathcal{S})$ is complete,
that is,
that every Cauchy net $\{\mu_\alpha\}$ has the limit $\mu$
in $M^+(\mathcal{S})$ (cf. Section~12 of~\cite{choquet.lecture}),
and that
\begin{math}
M^+_1(\mathcal{S})
= \{\mu\in M^+(\mathcal{S}): \mu(\mathcal{S}) = 1\}
\end{math}
is compact
(cf.\ Corollary~12.7 of \cite{choquet.lecture}).

\begin{definition}\label{approx.def}
Let $\varphi$ be a Scott function on $\mathbb{K}$,
and let $\mathcal{H}$ be a closed subset of $\mathcal{S}$.
A net $\{\mu_F\}$ of $M^+(\mathcal{S})$
indexed by finite subsets $F$ of $\mathbb{K}$ is said to
\emph{approximate} $\varphi$ over $\mathcal{H}$
if each $\mu_F$ is supported by $\mathcal{H}$
[i.e.\ $\mu_F(\mathcal{S}\setminus\mathcal{H}) = 0$]
and satisfies
\begin{math}
  \varphi(x) = \mu_F(\mathcal{S}_x)
\end{math}
for all $x\in F$.

A Radon measure $\mu$ on $\mathcal{S}$ is said to
\emph{represent} $\varphi$ over $\mathcal{H}$
if $\mu$ is supported by $\mathcal{H}$
and satisfies
$\varphi(x) = \mu(\mathcal{H}_x)$, $x\in \mathbb{K}$,
where we simply write
$\mathcal{H}_x = \mathcal{H}\cap\mathcal{S}_x$.
\end{definition}

\begin{theorem}\label{s.approx}
If a Scott function $\varphi$ is approximated over $\mathcal{H}$
then there exists some $\mu\in M^+_1(\mathcal{S})$ that
represents $\varphi$ over $\mathcal{H}$.
\end{theorem}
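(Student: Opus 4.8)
The plan is to extract $\mu$ as a weak$*$ cluster point of the approximating net $\{\mu_F\}$ and then verify that $\mu$ inherits both the support condition and the representing identity $\varphi(x)=\mu(\mathcal{H}_x)$. First I would note that since $\mu_F(\mathcal{S}_{\hat 1})=\varphi(\hat 1)=1$ and $\mathcal{S}_{\hat 1}\subseteq\mathcal{S}$ is (essentially) all of $\mathcal{S}$ — more precisely $\hat 1$ lies in every nonempty Scott open set, so $\mathcal{S}_{\hat 1}=\mathcal{S}$ — each $\mu_F$ lies in the compact set $M^+_1(\mathcal{S})$. Hence the net $\{\mu_F\}$ has a subnet converging weak$*$ to some $\mu\in M^+_1(\mathcal{S})$; relabel and call the limit $\mu$.

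Next I would check that $\mu$ is supported by $\mathcal{H}$. Since $\mathcal{H}$ is closed, its complement $\mathcal{S}\setminus\mathcal{H}$ is open, and by inner regularity of Radon measures $\mu(\mathcal{S}\setminus\mathcal{H})=\sup\{I(f):f\in C(\mathcal{S}),\ 0\le f\le 1,\ \mathrm{supp}\,f\subseteq\mathcal{S}\setminus\mathcal{H}\}$. For each such $f$ we have $\int f\,d\mu_F=0$ for every $F$ because $\mu_F$ is supported by $\mathcal{H}$, so $\int f\,d\mu=\lim_F\int f\,d\mu_F=0$; taking the supremum gives $\mu(\mathcal{S}\setminus\mathcal{H})=0$. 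Consequently $\mu$ is a Radon measure on $\mathcal{H}$ and $\mu(\mathcal{H}_x)=\mu(\mathcal{S}_x)$ for every $x$, so it remains only to prove $\varphi(x)=\mu(\mathcal{S}_x)$.

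The main obstacle is exactly this last identity, because $\mathcal{S}_x$ is open but not closed, so weak$*$ convergence gives only the inequalities $\mu(\mathcal{S}_x)\le\liminf_F\mu_F(\mathcal{S}_x)$ in one direction (via l.s.c.\ of open sets) and an upper bound via closed sets, and neither alone pins down the value; moreover the defining relation $\mu_F(\mathcal{S}_x)=\varphi(x)$ holds only for $x\in F$, not for all $x$. To get the lower bound $\mu(\mathcal{S}_x)\ge\varphi(x)$, I would use the way-below structure: fix $x$ and $a\ll x$, apply Lemma~\ref{urysohn} to obtain $f\in C(\mathcal{S})$ with $I_{\mathcal{S}_a}\le f\le I_{\mathcal{S}_x}$; then for any $F$ containing $a$ we have $\int f\,d\mu_F\ge\mu_F(\mathcal{S}_a)=\varphi(a)$, and since the indices $F$ eventually contain $a$, passing to the limit yields $\mu(\mathcal{S}_x)\ge\int f\,d\mu\ge\varphi(a)$. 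Letting $a$ range over $\llangle x\rrangle$ and using $\varphi(x)=\sup\varphi(\llangle x\rrangle)$ (Proposition~\ref{scott}(iii), Scott-continuity of $\varphi$) gives $\mu(\mathcal{S}_x)\ge\varphi(x)$. For the reverse inequality I would again use $f\le I_{\mathcal{S}_x}$: for any $F$ containing $x$, $\int f\,d\mu_F\le\mu_F(\mathcal{S}_x)=\varphi(x)$, and also $\mu_F(\mathcal{S}_a)\le\mu_F(\mathcal{S}_x)=\varphi(x)$; taking $f$ to approximate $I_{\mathcal{S}_x}$ from below on compact subsets and passing to the limit gives $\mu(K)\le\varphi(x)$ for every compact $K\subseteq\mathcal{S}_x$, and inner regularity of $\mu$ on the open set $\mathcal{S}_x$ upgrades this to $\mu(\mathcal{S}_x)\le\varphi(x)$. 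Combining the two bounds yields $\varphi(x)=\mu(\mathcal{S}_x)=\mu(\mathcal{H}_x)$, so $\mu$ represents $\varphi$ over $\mathcal{H}$, as claimed.
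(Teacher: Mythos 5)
Your proposal is correct and follows essentially the same route as the paper: extract a weak$*$ cluster point of the approximating net in the compact set $M^+_1(\mathcal{S})$, use Lemma~\ref{urysohn} together with Scott-continuity of $\varphi$ for the lower bound on $\mu(\mathcal{S}_x)$, and control the open sets $\mathcal{S}_x$ and $\mathcal{S}\setminus\mathcal{H}$ from above by inner regularity and Urysohn functions. The only cosmetic difference is that you inline the argument of the paper's Lemma~\ref{portmanteau} (and implicitly assume, as the paper does explicitly via ``without loss of generality $\hat{1}\in F$,'' that the relation $\mu_F(\mathcal{S}_{\hat{1}})=\varphi(\hat{1})=1$ is available) rather than citing it.
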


To prove Theorem~\ref{s.approx} we first need the following lemma.

\begin{lemma}\label{portmanteau}
Let $\{\mu_\alpha\}$ be a net
converging to $\mu$ in $M^+(\mathcal{S})$.
Then $\mu(\mathcal{U})\le\liminf_\alpha\mu_\alpha(\mathcal{U})$
for any open subset $\mathcal{U}$ of $\mathcal{S}$.
\end{lemma}

\begin{proof}
Let $\mathcal{U}$ be an open subset of $\mathcal{S}$,
and let $\varepsilon > 0$ be arbitrary.
Since $\mu$ is a Radon measure,
there exists a compact subset
$\mathcal{V}\subseteq\mathcal{U}$ such that
$\mu(\mathcal{U})-\varepsilon<\mu(\mathcal{V})$.
By Urysohn's lemma we can find $f\in C(\mathcal{S})$
such that $I_{\mathcal{V}}\le f\le I_{\mathcal{U}}$,
and obtain
\begin{equation*}
  \mu(\mathcal{U})-\varepsilon
  <\mu(\mathcal{V})
  \le \int f d\mu
  = \lim_{\alpha} \int f d\mu_\alpha
  \le\liminf_{\alpha} \mu_\alpha(\mathcal{U})
\end{equation*}
which completes the proof.
\end{proof}

\begin{proof}[Proof of Theorem~\ref{s.approx}]
Let $\{\mu_F\}$ be an approximating net of $M^+_1(\mathcal{S})$.
Without loss of generality assume $\hat{1}\in F$.
Since
\begin{math}
  \mu_F(\mathcal{S}) = \mu_F(\mathcal{S}_{\hat{1}}) =
  \varphi(\hat{1}) = 1,
\end{math}
the net $\{\mu_F\}$ is a subset of $M^+_1(\mathcal{S})$,
therefore it has a subnet $\{\mu_{F'}\}$
converging to some $\mu\in M^+_1(\mathcal{S})$.
Let $x\in \mathbb{K}$ and fix an arbitrary $\varepsilon > 0$.
By Lemma~\ref{portmanteau}
we can observe that
\begin{math}
  \mu(\mathcal{S}_x)
  \le \liminf_{F'} \mu_{F'}(\mathcal{S}_x) \le \varphi(x).
\end{math}
By the Scott-continuity of $\varphi$
we can find some $a\ll x$ satisfying
$\varphi(x)-\varepsilon < \varphi(a)$.
By Lemma~\ref{urysohn}
there exists $f\in C(\mathcal{S})$
such that
$I_{\mathcal{S}_a}\le f\le I_{\mathcal{S}_x}$.
Together we obtain
\begin{equation*}
\varphi(x) - \varepsilon
< \varphi(a) \le \limsup_{F'} \mu_{F'}(\mathcal{S}_a)
\le \lim_{F'} \int f d\mu_{F'}
= \int f d\mu
\le \mu(\mathcal{S}_x),
\end{equation*}
which implies that
$\varphi(x) = \mu(\mathcal{S}_x)$.
Again by Lemma~\ref{portmanteau}
we have
\begin{math}
  0\le\mu(\mathcal{S}\setminus\mathcal{H})
  \le\liminf_{F'} \mu_{F'}(\mathcal{S}\setminus\mathcal{H})
  \le 0;
\end{math}
thus, $\mu$ is also supported by $\mathcal{H}$.
\end{proof}

We set $\mathcal{S}' = \mathcal{S}\setminus\{\mathbb{K}\}$,
and view it as an LCH space.
Then $\mathcal{S}^x$ is a compact subset of $\mathcal{S}'$.

\begin{definition}
Let $\Phi$ be a conjugate Scott function on $\mathbb{K}$,
and let $\mathcal{H}' = \mathcal{H}\setminus\{\mathbb{K}\}$
be a closed subset of the LCH space $\mathcal{S}'$.
A Radon measure $\lambda$ on $\mathcal{S}'$ is said to
\emph{represent} $\Phi$ over $\mathcal{H}'$
if $\lambda$ is supported by $\mathcal{H}'$
and satisfies $\Phi(x) = \lambda(\mathcal{H}^x)$,
$x\in \mathbb{K}$,
where we customarily write
$\mathcal{H}^x = \mathcal{H}'\cap\mathcal{S}^x$.
When $\lambda(\mathcal{S}')<\infty$,
we identify a measure $\lambda$ on $\mathcal{S}'$
interchangeably as a measure on $\mathcal{S}$
with $\lambda(\{\mathbb{K}\})=0$.
\end{definition}

\begin{theorem}\label{t.approx}
Let $\{\lambda_\alpha\}$ be a net of Radon measures on
$\mathcal{S}'$ supported by $\mathcal{H}'$.
Suppose that
$\lambda_\alpha(\mathcal{S}^x) \le \Phi(x)$
for any $x\in \mathbb{K}$,
and that
for each $x\in \mathbb{K}$ there is some $\beta$
such that $\lambda_\alpha(\mathcal{H}^x) = \Phi(x)$
for all $\alpha\succ\beta$.
Then there exists some $\lambda\in M^+(\mathcal{S}')$
that represents $\Phi$ over $\mathcal{H}'$.
\end{theorem}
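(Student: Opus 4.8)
The plan is to follow the architecture of the proof of Theorem~\ref{s.approx}, but to replace the weak$*$ topology on the compact space $M^+_1(\mathcal{S})$ by the vague topology on $M^+(\mathcal{S}')$, since the $\lambda_\alpha$ are now (possibly infinite) measures on the locally compact, non-compact space $\mathcal{S}'$. The first step is to show that $\{\lambda_\alpha\}$ is uniformly bounded on compacta. Write $\mathcal{U}_x := \mathcal{S}\setminus\langle\llangle x\rrangle^*\rangle^*_{\mathcal{S}}$; by Lemma~\ref{s.cover} these sets cover $\mathcal{S}'$, and from $\mathcal{S}_x\subseteq\langle\llangle x\rrangle^*\rangle^*_{\mathcal{S}}$ one gets $\mathcal{U}_x\subseteq\mathcal{S}^x$. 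Hence any compact $C\subseteq\mathcal{S}'$ lies in $\mathcal{U}_{x_1}\cup\cdots\cup\mathcal{U}_{x_n}$, and with $w=x_1\wedge\cdots\wedge x_n$ in the lattice $\mathbb{K}$ one has $\mathcal{S}^{x_i}\subseteq\mathcal{S}^{w}$ for each $i$ (an upper set avoiding $x_i$ avoids $w$ since $w\le x_i$), so $C\subseteq\mathcal{S}^{w}$ and $\lambda_\alpha(C)\le\lambda_\alpha(\mathcal{S}^{w})\le\Phi(w)<\infty$ uniformly in $\alpha$.

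Next I would construct the vague limit. For each $f\in C_c(\mathcal{S}')$ the values $\int f\,d\lambda_\alpha$ lie in the fixed compact interval $[-c_f,c_f]$ with $c_f=\|f\|_\infty\sup_\alpha\lambda_\alpha(\mathrm{supp}\,f)$, which is finite by the previous step. Thus $\alpha\mapsto(\int f\,d\lambda_\alpha)_{f\in C_c(\mathcal{S}')}$ is a net in the product $\prod_f[-c_f,c_f]$, compact by Tychonoff's theorem, so a subnet $\{\lambda_{\alpha'}\}$ makes $L(f):=\lim_{\alpha'}\int f\,d\lambda_{\alpha'}$ exist for every $f$. The functional $L$ is positive and linear on $C_c(\mathcal{S}')$, so the Riesz representation theorem yields a Radon measure $\lambda\in M^+(\mathcal{S}')$ with $\int f\,d\lambda=L(f)$ for all $f$, i.e.\ $\lambda_{\alpha'}\to\lambda$ vaguely. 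The portmanteau inequalities for vague convergence — $\lambda(\mathcal{U})\le\liminf_{\alpha'}\lambda_{\alpha'}(\mathcal{U})$ for open $\mathcal{U}$, and $\lambda(C)\ge\limsup_{\alpha'}\lambda_{\alpha'}(C)$ for compact $C$ — follow just as in Lemma~\ref{portmanteau}, using $C_c(\mathcal{S}')$, the locally compact Urysohn lemma, and inner/outer regularity of Radon measures.

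It remains to check that $\lambda$ represents $\Phi$ over $\mathcal{H}'$. That $\lambda$ is supported by $\mathcal{H}'$ follows from inner regularity on the open set $\mathcal{S}'\setminus\mathcal{H}'$: for compact $C\subseteq\mathcal{S}'\setminus\mathcal{H}'$ choose, by local compactness and Urysohn, an $f\in C_c(\mathcal{S}')$ with $I_C\le f\le 1$ and $\mathrm{supp}\,f\cap\mathcal{H}'=\varnothing$; then $\lambda(C)\le\int f\,d\lambda=\lim_{\alpha'}\int f\,d\lambda_{\alpha'}\le\limsup_{\alpha'}\lambda_{\alpha'}(\mathrm{supp}\,f)=0$, and taking the supremum over such $C$ gives $\lambda(\mathcal{S}'\setminus\mathcal{H}')=0$. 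For the equality $\lambda(\mathcal{S}^x)=\Phi(x)$ I would prove two inequalities. Since $\mathcal{S}^x$ is compact and $\lambda_{\alpha'}(\mathcal{S}^x)=\Phi(x)$ eventually, the compact portmanteau inequality gives $\lambda(\mathcal{S}^x)\ge\Phi(x)$. Conversely, fix $\varepsilon>0$; Scott-continuity of $\Phi$ supplies $a\ll x$ with $\Phi(a)<\Phi(x)+\varepsilon$, and one checks the inclusions $\mathcal{S}^x\subseteq\mathcal{U}_a\subseteq\mathcal{S}^a$ — the first because $a\ll x$ forces $x\in\llangle a\rrangle^*$, so no $U\in\mathcal{S}^x$ can contain $\llangle a\rrangle^*$; the second directly. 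Since $\mathcal{U}_a$ is open, $\lambda(\mathcal{S}^x)\le\lambda(\mathcal{U}_a)\le\liminf_{\alpha'}\lambda_{\alpha'}(\mathcal{U}_a)\le\liminf_{\alpha'}\lambda_{\alpha'}(\mathcal{S}^a)\le\Phi(a)<\Phi(x)+\varepsilon$. Letting $\varepsilon\downarrow 0$ gives $\lambda(\mathcal{S}^x)\le\Phi(x)$, hence equality, and since $\lambda$ lives on $\mathcal{H}'$ we conclude $\lambda(\mathcal{H}^x)=\lambda(\mathcal{S}^x)=\Phi(x)$.

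The main obstacle is producing the vague limit without any countability assumption on $\mathcal{S}'$: one cannot extract a convergent subsequence by a metric/diagonal argument and must instead rely on Tychonoff compactness of a generally uncountable product of intervals followed by the Riesz representation theorem, which is only legitimate once the uniform bound on compacta has been obtained from Lemma~\ref{s.cover} and the lattice structure of $\mathbb{K}$. The remaining verifications are routine once the open sandwich $\mathcal{S}^x\subseteq\mathcal{U}_a\subseteq\mathcal{S}^a$ for $a\ll x$ is noticed.
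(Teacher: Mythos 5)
Your proposal is correct and follows essentially the same route as the paper: uniform boundedness on compacta via Lemma~\ref{s.cover} (the paper's inclusion $\mathcal{V}\subseteq\bigcup_{i}\mathcal{S}\setminus\langle\llangle x_i\rrangle^*\rangle^*_{\mathcal{S}}\subseteq\mathcal{S}^z$), extraction of a vaguely convergent subnet, Urysohn-type sandwiching between $\mathcal{S}^x$ and $\mathcal{S}^a$ for $a\ll x$, and a portmanteau argument for the support. The only deviations are minor: you construct the vague limit explicitly via Tychonoff and the Riesz representation where the paper cites the relative-compactness criterion from Choquet's lectures, and you obtain $\lambda(\mathcal{S}^x)\ge\Phi(x)$ from the compact-set portmanteau inequality, whereas the paper instead uses that $x\mapsto\lambda(\mathcal{S}^x)$ is itself a conjugate Scott function to choose $a$ with $\lambda(\mathcal{S}^a)<\lambda(\mathcal{S}^x)+\varepsilon$ and runs both inequalities through a single $g\in C_c(\mathcal{S}')$ with $I_{\mathcal{S}^x}\le g\le I_{\mathcal{S}^a}$.
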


\begin{proof}
Let $\mathcal{V}$ be a compact subset of $\mathcal{H}'$.
By Lemma~\ref{s.cover} we can find a finite sequence
$x_1,\ldots,x_n\in \mathbb{K}$ and some $z\le x_i$ for all $i$
so that
\begin{math}
  \mathcal{V}\subseteq
  \bigcup_{i=1}^n \mathcal{S}\setminus
  \langle\llangle x_i\rrangle^*\rangle^*_{\mathcal{S}}
  \subseteq\mathcal{S}^z
\end{math}
Thus, the net satisfies
\begin{math}
  \lambda_\alpha(\mathcal{V})
  \le \lambda_\alpha(\mathcal{S}^z)
  \le \Phi(z)
\end{math}
for each $\alpha$,
therefore it is relatively compact in
$M^+(\mathcal{S}')$ (see~Section~12 of~\cite{choquet.lecture}).
Let $\{\lambda_{\alpha'}\}$ be a converging subnet,
$\lambda$ be the limit of the subnet,
and let $\varepsilon > 0$ be arbitrary.
Since $\Phi(x)$ and $\lambda(\mathcal{S}^x)$
are conjugate Scott functions,
we can find some $a\ll x$ satisfying
$\Phi(a)<\Phi(x)+\varepsilon$ and
$\lambda(\mathcal{S}^a)<\lambda(\mathcal{S}^x)+\varepsilon$.
Similarly to Lemma~\ref{urysohn} (but applying
the locally compact version of Urysohn's lemma)
we can find $g\in C_c(\mathcal{S}')$ such that
$I_{\mathcal{S}^x}\le g\le I_{\mathcal{S}^a}$.
Thus, we obtain
\begin{equation*}
  \lambda(\mathcal{S}^x)
  \le \int g d\lambda
  = \lim_{\alpha'}\int g d\lambda_{\alpha'}
  \le \liminf_{\alpha'}\lambda_{\alpha'}(\mathcal{S}^a)
  \le \Phi(a) < \Phi(x)+\varepsilon ,
\end{equation*}
and
\begin{equation*}
  \Phi(x)
  \le\limsup_{\alpha'}\lambda_{\alpha'}(\mathcal{S}^x)
  \le\lim_{\alpha'}\int g d\lambda_{\alpha'}
  = \int g d\lambda
  \le\lambda(\mathcal{S}^a)
  < \lambda(\mathcal{S}^x)+\varepsilon;
\end{equation*}
thus, the equality holds.
Similarly to the proof of Theorem~\ref{s.approx}
we can observe that Lemma~\ref{portmanteau} holds for
a Radon measure $\lambda$ on $\mathcal{S}'$
thus
\begin{math}
  0\le\lambda(\mathcal{S}'\setminus\mathcal{H}')
  \le\liminf_{\alpha'}
  \lambda_{\alpha'}(\mathcal{S}'\setminus\mathcal{H}')
  =0 .
\end{math}
\end{proof}

\begin{remark}
Theorem~\ref{t.approx}
provides an alternative construction of Radon measure
of Proposition~\ref{Phi.rep}
when $\sup_{x\in \mathbb{K}}\Phi(x) = \infty$.
For each $a\in \mathbb{K}^*$
one can define the Scott function
$\psi_a(x) = \max(\Phi(a)-\Phi(x),0)$,
which is nondegenerate with
$\psi_a(\hat{1}) = \Phi(a)$.
By Proposition~\ref{s.rep}
we have a representation $\lambda_a$ on $\mathcal{S}'$ for $\psi_a$.
Observe that
$\lambda_a(\mathcal{S}^x)
= \Phi(a) - \lambda_a(\mathcal{S}_x)
\le \Phi(x)$,
so the equality holds if $\Phi(a) \ge \Phi(x)$.
Hence, we can apply Theorem~\ref{t.approx},
and show the existence of a Radon measure $\lambda$
on $\mathcal{S}'$ satisfying
$\Phi(x) = \lambda(\mathcal{S}^x)$.
\end{remark}

\section{Choquet theorems on domains}
\label{sec.cm}

Let $F$ be a semilattice,
and let $\phi$ be a function on $F$.
Then we can define a difference operator $\nabla_{z}$ by
$\nabla_{z} \phi(x) = \phi(x) - {\phi(x\wedge z)}$,
and the \emph{successive difference operator}
$\nabla_{z_1,\ldots,z_n}$ recursively by
$\nabla_{z_1,\ldots,z_n} \phi
= \nabla_{z_n}(\nabla_{z_1,\ldots,z_{n-1}} \phi)$
for $n = 2,3,\ldots$.
The operator $\nabla_{z_1,\ldots,z_n}$ does not depend on an order of
$z_i$'s, nor a repetition of elements,
and therefore, it is denoted by $\nabla_A$ for a finite subset
$A = \{z_1,\ldots,z_n\}$.

\begin{definition}
An increasing function $\phi$ is called \emph{completely monotone}
if $\nabla_{A} \phi \ge 0$ holds
for every nonempty finite subset $A$.
We can call similarly a decreasing function $\phi$ \emph{completely alternating}
if $\nabla_{A} \phi \le 0$ for each $A$.
\end{definition}

\begin{proposition}\label{cm.r}
Suppose $m$ is a finite measure on $F$,
and $\langle x \rangle$ is measurable for each $x\in F$.
Then
\begin{equation}\label{cm.phi}
  \phi(x) = m(\langle x \rangle)
\end{equation}
is nonnegative and completely monotone.
\end{proposition}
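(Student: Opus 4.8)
The plan is to verify the two claimed properties of $\phi(x) = m(\langle x \rangle)$ directly from the definitions. Nonnegativity is immediate since $m$ is a measure and $\langle x\rangle$ is measurable, so $\phi(x) = m(\langle x\rangle)\ge 0$. Monotonicity follows because $x\le y$ implies $\langle x\rangle\subseteq\langle y\rangle$, hence $\phi(x)\le\phi(y)$. The substance of the proposition is the complete monotonicity, and the key observation is that the difference operator $\nabla_z$ acts on the indicator functions of principal ideals in a transparent way.

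The main step is to unwind $\nabla_{z_1,\ldots,z_n}\phi$ at a point $x$ in terms of the measure $m$. First I would record the pointwise identity for a single difference: for the function $x\mapsto m(\langle x\rangle)$, writing $\langle x\wedge z\rangle = \langle x\rangle\cap\langle z\rangle$ in the semilattice $F$, one has
\begin{equation*}
  \nabla_z\phi(x) = m(\langle x\rangle) - m(\langle x\rangle\cap\langle z\rangle) = m(\langle x\rangle\setminus\langle z\rangle).
\end{equation*}
Iterating, I would show by induction on $n$ that
\begin{equation*}
  \nabla_{z_1,\ldots,z_n}\phi(x) = m\left(\langle x\rangle\setminus\bigcup_{i=1}^n\langle z_i\rangle\right),
\end{equation*}
the inductive step being exactly the single-difference identity applied to the set-valued expression, using $\left(\langle x\rangle\setminus\bigcup_{i=1}^{n-1}\langle z_i\rangle\right)\setminus\langle z_n\rangle = \langle x\rangle\setminus\bigcup_{i=1}^{n}\langle z_i\rangle$. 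Since $m$ is a nonnegative measure and the argument on the right is a measurable set (a finite Boolean combination of the measurable sets $\langle x\rangle,\langle z_1\rangle,\ldots,\langle z_n\rangle$), this quantity is $\ge 0$, which is precisely complete monotonicity.

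The only genuine obstacle is bookkeeping: one must confirm that all the sets appearing are measurable (they are, being finite intersections, unions, and complements of the $\langle\cdot\rangle$'s, which are assumed measurable) and that the difference identity is being applied at the level of sets consistently, since $\nabla_z$ is defined via the meet operation on $F$ rather than directly on sets. The translation $\langle x\wedge z\rangle = \langle x\rangle\cap\langle z\rangle$ is what bridges the two, and it holds because $w\le x\wedge z$ iff $w\le x$ and $w\le z$. Once this identity is in hand the induction is routine and the sign of $m$ does the rest; I would also note in passing that finiteness of $m$ is not essential for complete monotonicity itself but guarantees $\phi$ is real-valued, which is implicitly needed for the successive differences to be well-defined.
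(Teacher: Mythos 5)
Your proof is correct, and it reaches exactly the paper's key identity $\nabla_A\phi(x) = m\left(\langle x\rangle\setminus\langle A\rangle\right)$ (equation~(\ref{nabla.r})), but by a different mechanism. You prove it by induction on $n$ straight from the recursive definition of the successive difference, using the bridge $\langle x\wedge z\rangle = \langle x\rangle\cap\langle z\rangle$ and subtraction of nested measurable sets at each step; the paper instead expands $\nabla_A\phi(x)$ as the alternating sum (\ref{nabla.phi}) over subsets $B\subseteq A$ and matches it term-by-term against the inclusion--exclusion formula for $m\left(\bigcup_{z\in A}\langle z\rangle\cap\langle x\rangle\right)$. Your induction is more elementary and avoids inclusion--exclusion altogether; what the paper's route buys is the explicit expansion formula (\ref{nabla.phi}) itself, which is recorded because it is reused later (e.g.\ in the M\"{o}bius-inverse construction following the proposition and in Lemma~\ref{k.char}). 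Your bookkeeping points are accurate: measurability of the Boolean combinations follows from the hypothesis that each $\langle\cdot\rangle$ is measurable, and finiteness of $m$ is used only to keep $\phi$ real-valued so the differences (and the cancellation $m(A)-m(A\cap B)=m(A\setminus B)$) are well-defined without $\infty-\infty$ issues.
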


\begin{proof}
For any nonempty finite subset $A$
we can express
\begin{equation}\label{nabla.phi}
\nabla_A \phi(x) =
  \sum_{B \subseteq A} (-1)^{|B|}
  \phi(\bigwedge B\wedge x),
\end{equation}
where
$\bigwedge B\wedge x$
denotes the greatest lower bound of $B\cup\{x\}$.
By applying the inclusion-exclusion principle we can also show that
\begin{align*}
m\left(\bigcup_{z\in A}\langle z \rangle \cap \langle x \rangle \right)
& = \sum_{B\subseteq A, B\neq\varnothing}
(-1)^{|B|+1}
m\left(\bigcap_{z\in B}\langle z \rangle \cap \langle x \rangle \right)
\\ &
= \sum_{B\subseteq A, B\neq\varnothing}
(-1)^{|B|+1} \phi(\bigwedge B \wedge x)
\end{align*}
Comparing the sums above, we obtain
\begin{equation}\label{nabla.r}
\nabla_A \phi(x) = m(\langle x \rangle)
- m\left(\langle A \rangle \cap \langle x \rangle \right)
= m\left(\langle x \rangle \setminus \langle A \rangle \right),
\end{equation}
which immediately implies $\nabla_A \phi \ge 0$.
\end{proof}

The converse of Proposition~\ref{cm.r} is also true
if $F$ is finite.
We say ``$x$ covers $z$'' in $F$
if $z < x$ and there is no other element of $F$ between $z$ and $x$.
We set $r(x) = \phi(x)$
for the bottom element $x = \bigwedge F$,
and $r(x) = \nabla_A\phi(x)$ with
where $A$ is the collection of all the elements covered by $x$
when $x > \bigwedge F$.
Then we can construct a measure
$m(E) = \sum_{x\in E} r(x)$ for each $E\subseteq F$.
Using the inclusion-exclusion principle,
we can show
by induction on each element $x$ of a linear extension of $F$
that
\begin{math}
  r(x) = \nabla_A\phi(x)
  = m(\langle x\rangle) - m\left(\langle A \rangle\right)
\end{math}
for the collection $A$ of elements covered by $x$;
thus we obtain (\ref{cm.phi}).
Even if $\phi$ is neither nonnegative nor completely monotone,
the above construction of $m$
(which becomes a signed measure in general)
uniquely satisfies (\ref{cm.phi})
on a finite semilattice $F$,
and $r$ is known as the \emph{M\"{o}bius inverse} of $\phi$.

\begin{theorem}\label{m.cm}
There exists a unique Radon measure $\mu$ on $\mathcal{F}$
that represents $\varphi$
if and only if
$\varphi$ is a completely monotone Scott function on $\mathbb{K}$.
\end{theorem}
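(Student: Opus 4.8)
The plan is to establish the two directions of the equivalence, with the bulk of the work in the ``only if'' direction being immediate from earlier results and the substance lying in the existence half of ``if''. For necessity, suppose $\mu$ is a Radon measure on $\mathcal{F} = \mathrm{OFilt}(\mathbb{K})$ representing $\varphi$. First I would note that $\mathcal{F}$ is a closed subset of the compact Hausdorff space $\mathcal{S}$ by Lemma~\ref{f.closed}, so $\mu$ extends trivially to a Radon measure on $\mathcal{S}$ supported by $\mathcal{F}$ with $\varphi(x) = \mu(\mathcal{S}_x)$; Proposition~\ref{s.rep} then gives that $\varphi$ is a Scott function. For complete monotonicity, recall that $\Psi\colon F \mapsto \mathcal{K}_F$ of Theorem~\ref{hofmann.mislove} identifies the class of closed sets with $\mathcal{F}$, and under this identification $\mathcal{F}_x = \{V \in \mathcal{F} : x \in V\}$ corresponds to a set of the form $\langle F_0 \rangle$ (a principal ideal) in the semilattice $\mathcal{F}$ ordered by inclusion; more directly, I would observe that $\mathcal{F}_x \cap \mathcal{F}_y = \mathcal{F}_{x \vee y}$ since $x, y \in V$ iff $x \vee y \in V$ for an open filter $V$, so the sets $\{\mathcal{F}_x\}$ behave like principal ideals in a semilattice, and then apply Proposition~\ref{cm.r} (with $\langle x \rangle$ there playing the role of $\mathcal{F}_x$) to conclude $\nabla_A \varphi \ge 0$ for every finite $A$.

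For sufficiency, suppose $\varphi$ is a completely monotone Scott function. Uniqueness should come from the fact that the sets $\mathcal{F}_x$, $x \in \mathbb{K}$, generate the Borel $\sigma$-algebra of $\mathcal{F}$ (they form a base for its topology, as remarked after Definition~\ref{scott.def}, being the trace of the subbasic $\mathcal{S}_x$) together with a $\pi$-$\lambda$ argument: the family $\{\mathcal{F}_x\}$ is closed under finite intersections by the identity $\mathcal{F}_x \cap \mathcal{F}_y = \mathcal{F}_{x\vee y}$, so any two Radon measures agreeing on all $\mathcal{F}_x$ and on $\mathcal{F}$ itself (both give $\varphi(\hat 1) = 1$) agree on all Borel sets. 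The existence is the heart of the matter, and here the strategy outlined in the introduction is to invoke Theorem~\ref{s.approx}: it suffices to produce a net $\{\mu_F\}$ in $M^+_1(\mathcal{S})$, indexed by finite subsets $F \subseteq \mathbb{K}$, each supported by $\mathcal{F}$, with $\mu_F(\mathcal{S}_x) = \varphi(x)$ for all $x \in F$.

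To build $\mu_F$, fix a finite $F \subseteq \mathbb{K}$ containing $\hat 1$, and let $\langle F \rangle^\vee$ be the finite sub-sup-semilattice of $\mathbb{K}$ generated by $F$ under $\vee$; the restriction of $\varphi$ to $\langle F \rangle^\vee$ is completely monotone on this finite semilattice (viewing it as the dual ordering so that $\vee$ is the meet), so by the finite converse to Proposition~\ref{cm.r} --- the M\"obius-inversion construction discussed right before the theorem --- there is a nonnegative measure on $\langle F \rangle^\vee$, i.e.\ weights $r(y) \ge 0$ summing to $\varphi(\hat 1) = 1$, with $\varphi(x) = \sum_{y : y \ge x} r(y)$ for $x \in \langle F \rangle^\vee$. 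Each generator $y \in \langle F \rangle^\vee$ should then be realized as a point of $\mathcal{F}$: take the open filter $\mathcal{V}_y := \{U \in \mathcal{S} : y' \in U \text{ for some } y' \ll y\}$, or more carefully the Scott-open filter generated by $\langle y \rangle^*$ so that $x \in \mathcal{V}_y$ (i.e.\ $\mathcal{V}_y \in \mathcal{S}_x$, reading $\mathcal{S}_x$ as a subset of $\mathcal{F} \subseteq \mathcal{S}$) iff $x \le y$, matching the condition $y \ge x$; set $\mu_F = \sum_y r(y)\,\delta_{\mathcal{V}_y}$. Then $\mu_F$ is supported on finitely many points of $\mathcal{F}$, $\mu_F(\mathcal{S}) = 1$, and $\mu_F(\mathcal{S}_x) = \sum_{y \ge x} r(y) = \varphi(x)$ for $x \in F$, as required.

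\textbf{The main obstacle} will be pinning down this last correspondence $y \mapsto \mathcal{V}_y$ precisely --- verifying that the Scott-open filter attached to each semilattice element $y$ lands in $\mathcal{F} = \mathrm{OFilt}(\mathbb{K})$ and that membership $\mathcal{V}_y \in \mathcal{S}_x$ is exactly equivalent to the order relation $x \le y$ used in the M\"obius inversion, so that the combinatorial identity $\varphi(x) = \sum_{y \ge x} r(y)$ transports correctly to $\mu_F(\mathcal{S}_x) = \varphi(x)$. Assumption~\ref{k.ass} (multiplicativity of $\ll$ and $\hat 1 \ll \hat 1$) is what guarantees $\mathrm{OFilt}(\mathbb{K})$ is well-behaved enough for this, and the strong interpolation property of domains is what lets one pass between $\langle y \rangle^*$ and genuinely Scott-open filters. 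Once the approximating net is in hand, Theorem~\ref{s.approx} delivers a representing $\mu \in M^+_1(\mathcal{S})$ supported by $\mathcal{F}$ with $\varphi(x) = \mu(\mathcal{S}_x) = \mu(\mathcal{F}_x)$, completing the existence proof.
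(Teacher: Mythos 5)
Your overall skeleton (discrete approximating measures built by M\"obius inversion on finite subsemilattices, then Theorem~\ref{s.approx} with $\mathcal{H}=\mathcal{F}$, necessity from an inclusion--exclusion identity) is indeed the paper's strategy, but the execution has the order direction systematically inverted, and this is not a cosmetic slip. The identity you rely on is false: for an open filter $V$ one has $\mathcal{F}_x\cap\mathcal{F}_y=\mathcal{F}_{x\wedge y}$, not $\mathcal{F}_{x\vee y}$ (a filter is a \emph{filtered} upper set, so $x,y\in V$ gives a common lower bound in $V$, hence $x\wedge y\in V$; conversely $x\vee y\in V$ already follows from $x\in V$ alone and does not imply $y\in V$). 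It is the $\wedge$-identity that matches Proposition~\ref{cm.r} and yields $\nabla_{z_1,\ldots,z_n}\varphi(x)=\mu(\mathcal{F}_x^{z_1,\ldots,z_n})\ge 0$, i.e.\ (\ref{nabla.varphi}). The same inversion breaks your existence step: you take the $\vee$-generated subsemilattice and invert in the dual order, but $\varphi$ is \emph{decreasing} on the dual poset, and nonnegativity of the resulting weights would require $\varphi$ to be completely $\vee$-alternating --- that is exactly the hypothesis of Lemma~\ref{m.sup-ca}/Theorem~\ref{R-rep}, not of Theorem~\ref{m.cm}, and it does not follow from complete monotonicity. Moreover your desired criterion ``$x\in\mathcal{V}_y$ iff $x\le y$'' is impossible for an open filter containing $y$, since any $x\ge y$ lies in every upper set containing $y$. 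The correct construction (as in the paper) works with a finite $\wedge$-subsemilattice $F$, takes the M\"obius inverse $r\ge 0$ of $\varphi|_F$ in the \emph{original} order so that $\varphi(x)=\sum_{z\le x}r(z)$, and places mass $r(z)$ at an open filter $V_z$ chosen (using that $\mathcal{F}$ is a base of the Scott topology) so that $F\cap V_z=F\cap\langle z\rangle^*$; then $x\in V_z$ iff $z\le x$ for $x\in F$, and $\mu_F(\mathcal{F}_x)=\sum_{z\le x}r(z)=\varphi(x)$.

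The uniqueness argument also has a genuine gap. A $\pi$--$\lambda$ argument on the family $\{\mathcal{F}_x\}$ only identifies the two measures on the $\sigma$-algebra generated by these sets, and without second countability (which is precisely the hypothesis this paper is designed to avoid) that $\sigma$-algebra need not contain all Borel sets of the Lawson topology on $\mathcal{F}$: the subbasic closed sets $\langle V\rangle^*_{\mathcal{F}}=\bigcap_{x\in V}\mathcal{F}_x$ are uncountable intersections, and Lawson-open sets are arbitrary unions of basic sets. Uniqueness here is uniqueness \emph{among Radon measures}, and the paper's Lemma~\ref{mu.unique} exploits exactly that: inclusion--exclusion reduces everything to the values $\mu(\mathcal{F}_x)$ and $\mu(\mathcal{F}_x^{z_1,\ldots,z_n})$, and then MCT for increasing nets (valid for Radon measures) recovers the measure of every basic Lawson-open set $\mathcal{F}_x\setminus\langle V_1,\ldots,V_n\rangle^*_{\mathcal{F}}$, of finite unions of these, and finally of arbitrary open sets, which by regularity determines $\mu$. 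Some replacement for this net-approximation step is needed; countable generation alone will not do.
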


Theorem~\ref{m.cm} is the first in a series of results, which we collectively
call ``Choquet theorems on domains.''
The existence of $\mu$ in Theorem~\ref{m.cm} will be proved first,
followed by the proof of Lemma~\ref{mu.unique} for the uniqueness.

\begin{proof}[The existence part of Theorem~\ref{m.cm}]
Let $F$ be a finite subsemilattice of $\mathbb{K}$
(possibly generated by a finite subset of $\mathbb{K}$),
and let $r$ be the M\"{o}bius inverse
of $\varphi$ restricted on $F$.
Since $\mathcal{F}$ is a Scott open base for $\mathbb{K}$,
we can select a collection of distinct elements
$V_z$'s from $\mathcal{F}$
so that $F\cap V_z = F\cap\langle z\rangle^*$ for each $z\in F$.
Then we can define a discrete measure $\mu$ on $\mathcal{F}$
by setting $\mu(\{V_z\}) = r(z)$, $z\in F$,
and obtain
$\mu(\mathcal{F}_x) = \sum_{z\le x} r(z) = \varphi(x)$
for each $x\in F$.
Hence, $\varphi$ can be approximated over $\mathcal{F}$,
and $\mu$ is obtained by Theorem~\ref{s.approx}.
\end{proof}

Assuming the representation of $\mu$ in Theorem~\ref{m.cm}
above we can obtain
\begin{equation}\label{nabla.varphi}
\nabla_{z_1,\ldots,z_n}\varphi(x) =
\mu(\mathcal{F}_x^{z_1,\ldots,z_n}),
\end{equation}
where
\begin{equation}\label{F.x}
\mathcal{F}_x^{z_1,\ldots,z_n}
:= \{V\in\mathcal{F}: x\in V, z_i\not\in V,
i=1,\ldots,n\}.
\end{equation}
Representation (\ref{nabla.varphi}) implies
the necessity of complete monotonicity of $\varphi$
in Theorem~\ref{m.cm}.
The uniqueness of $\mu$ is implied by (\ref{nabla.varphi})
and $\varphi(x) = \mu(\mathcal{F}_x)$,
which is the immediate consequence of
the lemma below.

\begin{lemma}\label{mu.unique}
A Radon measure $\mu$ on $\mathcal{F}$
is uniquely determined by
the measure
on the collection of subsets
$\mathcal{F}_x$ and $\mathcal{F}_x^{z_1,\ldots,z_n}$.
\end{lemma}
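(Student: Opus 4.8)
The plan is to show that the values $\mu(\mathcal{F}_x)$ and $\mu(\mathcal{F}_x^{z_1,\ldots,z_n})$ determine $\mu$ on a $\pi$-system that generates the Borel $\sigma$-algebra of $\mathcal{F}$, and then to invoke the regularity of Radon measures to pass from agreement on Borel sets to equality of the measures. The essential point is that two Radon measures on a compact Hausdorff space (recall $\mathcal{F} = \mathrm{OFilt}(\mathbb{K})$ is closed in the compact Hausdorff space $\mathcal{S}$ by Lemma~\ref{f.closed}) which agree on a generating $\pi$-system of fixed total mass must coincide, by the Dynkin $\pi$--$\lambda$ theorem, and then the Radon property is automatic since both are already assumed Radon.

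First I would identify the relevant $\pi$-system. The sets $\mathcal{F}_x = \{V\in\mathcal{F}: x\in V\}$ are closed under finite intersections --- indeed $\mathcal{F}_x\cap\mathcal{F}_y$ contains exactly those open filters containing both $x$ and $y$, hence containing $x\vee y$, so $\mathcal{F}_x\cap\mathcal{F}_y = \mathcal{F}_{x\vee y}$ since $\mathbb{K}$ is a sup-semilattice. Together with the fact (noted after Definition~\ref{scott.def}) that the $\mathcal{S}_x$ form a base for the Scott topology of $\mathcal{S}$, the traces $\mathcal{F}_x = \mathcal{F}\cap\mathcal{S}_x$ form a base for the Scott topology of $\mathcal{F}$. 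The sets $\mathcal{F}_x^{z_1,\ldots,z_n}$ of \eqref{F.x} are then nothing but $\mathcal{F}_x \setminus \bigcup_{i=1}^n \mathcal{F}_{z_i}$, i.e.\ the intersection of a basic Scott-open set with finitely many complements of basic Scott-open sets; these are precisely the basic sets of the Lawson-type topology generated by the $\mathcal{F}_x$ and their complements. I would argue that this collection of sets is itself a $\pi$-system: the intersection of $\mathcal{F}_x\setminus\bigcup_i\mathcal{F}_{z_i}$ with $\mathcal{F}_y\setminus\bigcup_j\mathcal{F}_{w_j}$ equals $\mathcal{F}_{x\vee y}\setminus\bigcup_{i,j}(\mathcal{F}_{z_i}\cup\mathcal{F}_{w_j})$, which is again of the same form.

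Next I would verify that this $\pi$-system generates the Borel $\sigma$-algebra of $\mathcal{F}$. Since $\mathcal{F}$ carries the Lawson topology, whose subbase (per Lemma~\ref{f.closed}) consists of the $\mathcal{F}_x$, $x\in\mathbb{K}$, together with the complements $\mathcal{F}\setminus\langle V\rangle^*_{\mathcal{F}}$, $V\in\mathcal{F}$, and since every $\langle V\rangle^*_{\mathcal{F}}$ can be written (using that $\mathcal{F}=\mathrm{OFilt}(\mathbb{K})$ is a base for the Scott topology of $\mathbb{K}$) as an intersection of sets $\langle V'\rangle^*_{\mathcal{F}}$ with $V'$ ranging over a base, I can reduce the subbasic complements to countable or directed intersections of complements of $\mathcal{F}_x$'s; compactness of $\mathcal{F}$ then lets me extract the relevant structure. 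Passing to the $\sigma$-algebra, I get that the Borel sets are generated by the $\mathcal{F}_x$, hence a fortiori by the larger $\pi$-system, and in fact the $\pi$-system and $\{\mathcal{F}_x : x\in\mathbb{K}\}$ generate the same $\sigma$-algebra. Then, since $\hat{1}\in V$ for every open filter $V$, we have $\mathcal{F}_{\hat{1}} = \mathcal{F}$, so the prescribed value $\mu(\mathcal{F}_{\hat{1}}) = \varphi(\hat{1})$ fixes the total mass; by the $\pi$--$\lambda$ theorem any two Radon measures agreeing on the $\pi$-system agree on all Borel sets, and being Radon they are equal as measures.

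The main obstacle I anticipate is the measurability/generation step: one must be careful that the Lawson topology's subbasic closed sets $\langle V\rangle^*_{\mathcal{F}}$ lie in the $\sigma$-algebra generated by the $\mathcal{F}_x$. This is where the hypothesis that $\mathcal{F}$ is a \emph{base} for the Scott topology of $\mathbb{K}$ (not merely a subbase) is crucial, and where I would need to exploit compactness of $\mathcal{F}$ to replace an arbitrary (possibly uncountable) intersection $\langle V\rangle^*_{\mathcal{F}} = \bigcap\{\langle V'\rangle^*_{\mathcal{F}}: V'\subseteq V,\ V'\in\mathcal{F}\}$ by a countable one, or else to argue directly that $\langle V\rangle^*_{\mathcal{F}}$ is closed and hence Borel in a way compatible with the generating system. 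A clean way around it is to observe that it suffices to prove uniqueness against the weak$*$ topology: two Radon measures on the compact space $\mathcal{F}$ with $\int f\,d\mu = \int f\,d\mu'$ for all $f\in C(\mathcal{F})$ coincide, and by Lemma~\ref{urysohn}-type Urysohn arguments the continuous functions are uniformly approximable by combinations built from the indicators of the $\mathcal{F}_x$, so agreement on the $\pi$-system upgrades to agreement of all integrals of continuous functions, whence $\mu = \mu'$.
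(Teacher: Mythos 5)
There is a genuine gap, and it sits exactly where you flagged it: the generation step. Your argument hinges on the claim that the $\pi$-system built from the sets $\mathcal{F}_x$ and $\mathcal{F}_x^{z_1,\ldots,z_n}$ generates the Borel $\sigma$-algebra of $\mathcal{F}$ in the Lawson topology. That is true when $\mathcal{F}$ is second countable, but the whole point of this lemma is the non-separable case, and there it fails: a subbasic closed set $\langle V\rangle^*_{\mathcal{F}}$ is the intersection $\bigcap_{z\in V}\mathcal{F}_z$ over a generally uncountable index set, and a Lawson-open set is a generally uncountable union of basic open sets, so the $\sigma$-algebra generated by the $\mathcal{F}_x$'s is in general a proper sub-$\sigma$-algebra of the Borel sets. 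Compactness does not rescue this --- it gives finite subcovers of open covers, not countable reductions of intersections of closed sets. Consequently the Dynkin $\pi$--$\lambda$ theorem only yields agreement of the two measures on $\sigma(\{\mathcal{F}_x,\mathcal{F}_x^{z_1,\ldots,z_n}\})$, not on all Borel sets; and note that your argument never actually uses the Radon hypothesis, whereas it must be used, since Example~\ref{ex.nonunq} shows that uniqueness over mere Borel measures is false. The weak$*$ fallback in your last paragraph does not close the gap either: the indicators $I_{\mathcal{F}_x}$ are only lower semicontinuous, and finite combinations of them are not uniformly dense in $C(\mathcal{F})$, so ``agreement on the $\pi$-system upgrades to agreement of all integrals of continuous functions'' is unsubstantiated. (A separate, easily fixed slip: since elements of $\mathcal{F}$ are filters, $\mathcal{F}_x\cap\mathcal{F}_y=\mathcal{F}_{x\wedge y}$, not $\mathcal{F}_{x\vee y}$; your $\pi$-system closure computation should use the meet.)

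The paper's proof avoids the $\sigma$-algebra generation issue entirely by exploiting regularity: a Radon measure is determined by its values on open sets, so it suffices to compute $\mu$ of every Lawson-open subset of $\mathcal{F}$ from the given data. This is done in three monotone-convergence steps (MCT for nets, i.e.\ $\tau$-smoothness of Radon measures): first, each basic open set $\mathcal{F}_x\setminus\langle V_1,\ldots,V_n\rangle^*_{\mathcal{F}}$ is exhausted by the decreasing net of closed sets $\langle\llangle z_1\rrangle^*,\ldots,\llangle z_n\rrangle^*\rangle^*_{\mathcal{F}}$ indexed by $(z_1,\ldots,z_n)\in V_1\times\cdots\times V_n$, so its measure is the supremum of $\mu(\mathcal{F}_x^{z_1,\ldots,z_n})$ over that net; second, finite intersections of basic opens are again basic (via $x\wedge y$), so finite unions are handled by inclusion--exclusion; third, an arbitrary open set is the directed union of such finite unions, and MCT determines its measure. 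If you want to salvage your approach, this net/regularity argument is precisely the replacement for the $\pi$--$\lambda$ theorem that is needed; without it (or some equivalent use of the Radon property), the proof does not go through in the non-second-countable setting.
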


\begin{proof}
The open filters $\mathcal{F}_x$, $x\in \mathbb{K}$,
and the closed upper subsets
$$
\langle V_1,\ldots,V_n\rangle_{\mathcal{F}}^*
:= \{V\in\mathcal{F}: V_i\subseteq V \mbox{ for some $i$}\},
\quad
V_1,\ldots,V_n\in\mathcal{F},
$$
generate an open base that consists of subsets of the form
$\mathcal{F}_x$ or
\begin{math}
  \mathcal{F}_x\setminus
  \langle V_1,\ldots,V_n\rangle_{\mathcal{F}}^*
\end{math}
with $n\ge 1$.
Consider a net
\begin{math}
  \left\{
  \langle \llangle z_1\rrangle^*,
  \ldots,\llangle z_n\rrangle^* \rangle_{\mathcal{F}}^*
  \right\}
\end{math}
of closed upper subsets of $\mathcal{F}$
indexed by
$(z_1,\ldots,z_n)\in V_1\times\cdots\times V_n$,
where the indices are inversely ordered coordinate-wise
[i.e., $(z_1,\ldots,z_n)\preceq(z_1',\ldots,z_n')$
if $z_i\ge z_i'$ for each $i$].
Then the net is decreasing and converges to
$\langle V_1,\ldots,V_n\rangle_{\mathcal{F}}^*$.
By MCT we obtain
\begin{equation}\label{mu.subbase}
\mu\left(\mathcal{F}_x\setminus
\langle V_1,\ldots,V_n\rangle_{\mathcal{F}}^*\right)
= \sup
\mu\left(\mathcal{F}_x\setminus
\langle \llangle z_1\rrangle^*,
\ldots,\llangle z_n\rrangle^* \rangle_{\mathcal{F}}^*\right),
\end{equation}
where the supremum is
over $(z_1,\ldots,z_n)\in V_1\times\cdots\times V_n$;
in fact,
(\ref{mu.subbase}) equals the supremum of
$\mu(\mathcal{F}_x^{z_1,\ldots,z_n})$
over the same range.

The measure $\mu$ of the intersection
$$
\left(\mathcal{F}_x\setminus
\langle V_1,\ldots,V_n\rangle_{\mathcal{F}}^*\right)
\cap
\left(\mathcal{F}_y\setminus
\langle U_1,\ldots,U_m\rangle_{\mathcal{F}}^*\right)
=
\mathcal{F}_{x\wedge y}\setminus
\langle V_1,\ldots,V_n,U_1,\ldots,U_m\rangle_{\mathcal{F}}^*
$$
is determined by (\ref{mu.subbase}),
and so is that of the finite union
of the form $\mathcal{F}_x\setminus
\langle V_1,\ldots,V_n\rangle_{\mathcal{F}}^*$
by the inclusion-exclusion principle.
An open subset of $\mathcal{F}$
can be expressed as a union of these open subsets,
and is uniquely determined by MCT.
\end{proof}

The following characterization
is known in the general semi-group setting
(see, e.g.,~\cite{berg}).

\begin{lemma}\label{ca.char}
$\Phi$ is a completely alternating conjugate if and only if
$\varphi_a(x) = \Phi(x\wedge a) - \Phi(x)$
is a completely monotone Scott function for every $a\in \mathbb{K}$.
\end{lemma}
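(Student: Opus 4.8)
The plan is to prove the equivalence in both directions, relying on the difference-operator identities already established in the excerpt, especially the representation-theoretic interpretation of $\nabla_A$ on a finite semilattice (M\"{o}bius inversion) and Proposition~\ref{cm.r}. First I would observe the purely algebraic skeleton: for a fixed $a \in \mathbb{K}$, the map $x \mapsto x \wedge a$ is a semilattice homomorphism onto $\langle a \rangle$, and $\nabla_z \varphi_a(x) = \varphi_a(x) - \varphi_a(x \wedge z) = \bigl(\Phi(x\wedge a) - \Phi(x\wedge z\wedge a)\bigr) - \bigl(\Phi(x) - \Phi(x\wedge z)\bigr)$. Iterating, one gets for any nonempty finite $A = \{z_1,\ldots,z_n\}$ the key identity
\begin{equation}\label{nabla.varphia}
  \nabla_A \varphi_a(x) = \nabla_{A}\nabla_{\{a\}}\Phi(x) = -\,\nabla_{A\cup\{a\}}\Phi(x),
\end{equation}
which is a direct consequence of the expansion $\nabla_A\phi(x) = \sum_{B\subseteq A}(-1)^{|B|}\phi(\bigwedge B \wedge x)$ from \eqref{nabla.phi} together with $\varphi_a(y) = -\nabla_{\{a\}}\Phi(y)$ and the order-independence of successive differences noted right after the definition of $\nabla_{z_1,\ldots,z_n}$. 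Once \eqref{nabla.varphia} is in hand, the ``alternating/monotone'' bookkeeping is immediate: $\nabla_A\varphi_a \ge 0$ for all nonempty finite $A$ and all $a$ if and only if $\nabla_{A'}\Phi \le 0$ for all finite $A'$ with $|A'| \ge 2$. The remaining case $|A'| = 1$, i.e.\ that $\Phi$ is decreasing ($\nabla_{\{z\}}\Phi(x) = \Phi(x) - \Phi(x\wedge z) \le 0$), follows from $\varphi_a$ being nonnegative: taking $a = z$ gives $\varphi_z(x) = \Phi(x\wedge z) - \Phi(x) \ge 0$.

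Next I would dispatch the regularity/continuity half of each direction, which is where a little care is needed rather than mere algebra. For the forward direction, assume $\Phi$ is a completely alternating conjugate Scott function. Monotonicity and nonnegativity of $\varphi_a$ were just checked. For Scott-continuity of $\varphi_a$, I would use the criterion recorded after Definition~\ref{scott.fun}: given $x$ and $\varepsilon > 0$, Scott-continuity of $\Phi$ (as a conjugate, i.e.\ Scott-continuity into $[0,\infty)^*$, equivalently upper semicontinuity from below) yields $z \ll x$ with $|\Phi(x) - \Phi(y)| < \varepsilon/2$ whenever $z \ll y \le x$; and applying the same to the point $x \wedge a$ — noting that $z' \ll x\wedge a$ forces $z'\ll x$ by monotonicity of $\wedge$ and that $y \mapsto y \wedge a$ is Scott-continuous so preserves the relevant directed suprema — one controls $|\Phi(x\wedge a) - \Phi(y\wedge a)|$ as well. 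Combining, $|\varphi_a(x) - \varphi_a(y)| < \varepsilon$ for a suitable $z \ll x$, so $\varphi_a$ is a Scott function. Boundedness is automatic since $\varphi_a(x) \le \varphi_a(\hat 1) = \Phi(a) - \Phi(\hat 1) = \Phi(a) < \infty$.

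For the converse, assume $\varphi_a$ is a completely monotone Scott function for every $a$. Complete alternation of $\Phi$ and the fact that $\Phi$ is decreasing both fall out of \eqref{nabla.varphia} and the nonnegativity of $\varphi_a$ as explained above, using $\nabla_{A\cup\{a\}}\Phi = -\nabla_A\varphi_a \le 0$ and ranging over all finite $A$, $a$. It remains to see that $\Phi$ itself is a conjugate Scott function, i.e.\ decreasing (done) and lower-semicontinuous into $[0,\infty)^*$, which is upper-semicontinuity from below in the Scott topology; equivalently, by Proposition~\ref{scott}(iii) applied in the dual, $\Phi(x) = \inf_{z \ll x}\Phi(z)$ is not quite the statement — rather one wants that for a directed $E$ with $\sup E = x$ one has $\Phi(x) = \inf_{z\in E}\Phi(z)$. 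I would derive this from the Scott-continuity of $\varphi_{\hat 1}$: wait — $\varphi_{\hat 1}(x) = \Phi(x\wedge\hat 1) - \Phi(x) = 0$, which is useless. Instead, fix any $a$; for a directed $E \to x$, Scott-continuity of $\varphi_a$ gives $\varphi_a(x) = \sup_{z\in E}\varphi_a(z)$. Choosing $a$ large is not available (no top above $\hat 1$), so the clean move is: apply this with $a$ replaced by each element and use that $\langle x\rangle$ is cofinal — more precisely, take $E \to x$ directed and note $x \wedge x = x$, so picking $a := $ an element $\ge$ everything relevant is not possible; the honest route is to observe $\Phi(z) - \Phi(x) = \lim$-type control. \textbf{The main obstacle I anticipate} is exactly this: recovering Scott-continuity of $\Phi$ from that of the family $\{\varphi_a\}_a$ when $\mathbb{K}$ has no element strictly above all the $x$'s in play. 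The fix is to use that $\varphi_a(z) = \Phi(z\wedge a) - \Phi(z)$ and that for $z$ in a directed set converging to $x$ with, say, $z \le x$, one has $z \wedge a = z\wedge x \wedge a$; choosing $a \gg x$ is impossible but choosing any fixed $a$ with $x \wedge a$ independent of the tail (e.g.\ noticing that for the \emph{decreasing} net $\Phi(z)$ indexed by $z \uparrow x$, boundedness below by $0$ guarantees the infimum exists and one only needs to match it with $\Phi(x)$), and then deploying the criterion after Definition~\ref{scott.fun} in reverse: if $\Phi$ failed to be a conjugate Scott function, some $\varphi_a$ would fail Scott-continuity at $x$, contradiction. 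I would write this last step carefully, since it is the one place where the equivalence is not a formal manipulation of $\nabla_A$.
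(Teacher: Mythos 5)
Your algebraic core is exactly the paper's argument in slightly different clothing. The identity $\nabla_A\varphi_a(x)=-\nabla_{A\cup\{a\}}\Phi(x)$ (note a harmless sign slip in your middle term: since $\varphi_a=-\nabla_a\Phi$, one has $\nabla_A\varphi_a=-\nabla_A\nabla_a\Phi$; the outer identity you actually use is correct), together with the singleton case handled via nonnegativity of $\varphi_z$, is the same inclusion--exclusion fact the paper records as $\nabla_B\Phi(x)=-\nabla_B\varphi_a(x)$ whenever $a\le\bigwedge B\wedge x$. Your forward-direction continuity sketch is also fine (the map $y\mapsto y\wedge a$ preserves directed suprema in any domain with binary meets, so $x\mapsto\Phi(x\wedge a)$ is a conjugate Scott function and the $\varepsilon$-criterion after Definition~\ref{scott.fun} applies); the paper simply calls this direction ``clear.''

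The genuine gap is where you yourself flag it: recovering Scott-continuity of $\Phi$ in the converse. Your proposed ``fix'' is circular --- ``if $\Phi$ failed to be a conjugate Scott function, some $\varphi_a$ would fail Scott-continuity, contradiction'' is precisely the implication to be proved, and you never exhibit the witnessing $a$; the surrounding remarks about the infimum of the net $\Phi(z)$ existing do not address the real issue, which is identifying that infimum with $\Phi(x)$. The missing idea is to choose $a$ \emph{below}, inside the directed set, rather than trying to dominate $x$: let $E$ be directed with $\sup E=x$. Monotonicity of $\Phi$ (already derived from $\varphi_z\ge 0$) gives $\inf_{z\in E}\Phi(z)\ge\Phi(x)$. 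Now fix any $a\in E$. On the tail $\{z\in E: z\ge a\}$, which is cofinal in $E$ by directedness, one has $z\wedge a=a$ and hence $\varphi_a(z)=\Phi(a)-\Phi(z)$, while $\varphi_a(x)=\Phi(a)-\Phi(x)$ because $a\le x$. Since $\varphi_a$ is an increasing Scott-continuous function (Proposition~\ref{scott}), $\Phi(a)-\Phi(x)=\varphi_a(x)=\sup_{z\in E}\varphi_a(z)=\sup_{z\in E,\,z\ge a}\bigl(\Phi(a)-\Phi(z)\bigr)=\Phi(a)-\inf_{z\in E}\Phi(z)$, and as $\Phi(a)<\infty$ this yields $\Phi(x)=\inf_{z\in E}\Phi(z)$, i.e.\ $\Phi$ is Scott-continuous into $[0,\infty)^*$. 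To be fair, the paper's own proof is silent on this point (it only records the difference identity), so your instinct that the step deserves care is sound; but as written your proposal does not supply the argument, and the correct choice of $a$ is the opposite of the one you were searching for.
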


\begin{proof}
Assuming that $\Phi$ is a completely alternating conjugate,
$\varphi_a = -\nabla_a\Phi$ is clearly a completely monotone Scott
function.
Conversely, suppose that
$\varphi_a$ is a completely monotone Scott function.
Then we find $\nabla_B\Phi(x) = -\nabla_B \varphi_a(x)$
if $a \le \bigwedge B\wedge x$,
implying that $\Phi$ is a completely alternating conjugate.
\end{proof}

Let $\Phi$ be a completely alternating conjugate Scott function.
By Theorem~\ref{m.cm}
we can construct
a representation $\lambda_a\in M^+(\mathcal{F})$ for
the completely monotone Scott function $\varphi_a$
of Lemma~\ref{ca.char}.
Since $\varphi_a$ is nondegenerate,
$\lambda_a$ can be viewed as a Radon measure on
the LCH space $\mathcal{F}' = \mathcal{F}\setminus\{\mathbb{K}\}$
(see Definition~\ref{degen.def}).
Observe that
\begin{equation}\label{lambda.a}
  \lambda_a(\mathcal{F}^x)
  = \varphi_a(\hat{1}) - \varphi_a(x)
  = \Phi(a)+\Phi(x)-\Phi(x\wedge a)\le \Phi(x),
\end{equation}
where the equality holds if $a \ge^* x$.
Thus, Theorem~\ref{t.approx}
assures the existence of the measure $\lambda$
for the following version of Choquet theorem.

\begin{theorem}\label{m.ca}
There exists a unique Radon measure $\lambda$ on $\mathcal{F}'$
such that $\Phi(x) = \lambda(\mathcal{F}^x)$ for any $x\in \mathbb{K}$
if and only if $\Phi$ is a completely alternating conjugate.
\end{theorem}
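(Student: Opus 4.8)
\emph{Proof plan.} The plan is to treat the three assertions of the theorem in turn: the existence of $\lambda$, the necessity of the hypothesis, and the uniqueness of $\lambda$. Existence has in fact already been carried out in the discussion preceding the statement, so I would only recall it: given a completely alternating conjugate $\Phi$, Lemma~\ref{ca.char} produces for each $a\in\mathbb{K}$ the completely monotone Scott function $\varphi_a$, Theorem~\ref{m.cm} represents it by a Radon measure $\lambda_a$ which—being nondegenerate—lives on $\mathcal{F}'$, the estimate~(\ref{lambda.a}) gives $\lambda_a(\mathcal{F}^x)\le\Phi(x)$ with equality once $a\ge^{*}x$, and then, indexing $\{\lambda_a\}$ by the dual poset $\mathbb{K}^{*}$ (directed because $\mathbb{K}$ is a lattice) and taking $\mathcal{H}'=\mathcal{F}'$ (closed in $\mathcal{S}'$ by Lemma~\ref{f.closed}), Theorem~\ref{t.approx} supplies a $\lambda\in M^{+}(\mathcal{S}')$ representing $\Phi$ over $\mathcal{F}'$.

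For necessity, suppose a Radon measure $\lambda$ on $\mathcal{F}'$ satisfies $\Phi(x)=\lambda(\mathcal{F}^x)$. Extending $\lambda$ by zero off the closed subset $\mathcal{F}'$ of $\mathcal{S}'$ makes it a Radon measure on $\mathcal{S}'$ with $\Phi(x)=\lambda(\mathcal{S}^x)$, so $\Phi$ is a conjugate Scott function by Proposition~\ref{Phi.rep}. For complete alternation I would exploit that every $U\in\mathcal{F}'$ is a \emph{filter}, so that $x\wedge z\in U$ iff ($x\in U$ and $z\in U$); writing $g_x$ for the indicator of $\mathcal{S}^x$, a short induction on $n$ then yields the pointwise identity
\[
  \nabla_{z_1,\dots,z_n}g_x(U)=-\,I_{\mathcal{F}_x^{z_1,\dots,z_n}}(U)
  \quad\text{for }U\in\mathcal{F}'.
\]
Integrating against $\lambda$ gives $\nabla_{z_1,\dots,z_n}\Phi(x)=-\lambda(\mathcal{F}_x^{z_1,\dots,z_n})\le 0$, the $\nabla$-analogue of~(\ref{nabla.varphi}); hence $\Phi$ is a completely alternating conjugate. (Equivalently, the same identity with a single index $a$ shows that $\lambda$ restricted to $\mathcal{F}^a$ represents $\varphi_a=-\nabla_a\Phi$ over $\mathcal{F}$, so Theorem~\ref{m.cm} makes each $\varphi_a$ a completely monotone Scott function and Lemma~\ref{ca.char} again gives the conclusion; I would use whichever is shorter to write out.)

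For uniqueness, let $\lambda$ be any Radon measure on $\mathcal{F}'$ representing $\Phi$. Fix $a\in\mathbb{K}$ and restrict $\lambda$ to the closed set $\mathcal{F}^a$, which has finite mass $\Phi(a)$; by the single-index form of the identity above this restriction represents $\varphi_a$ over $\mathcal{F}$, so by the uniqueness half of Theorem~\ref{m.cm} (i.e.\ Lemma~\ref{mu.unique}) it coincides with the $\lambda_a$ of the lead-in and is thus completely pinned down by $\Phi$. It then remains to patch: as in the proof of Theorem~\ref{t.approx}, Lemma~\ref{s.cover} shows that every compact subset of $\mathcal{F}'$ is contained in some $\mathcal{S}^z$, hence in some $\mathcal{F}^z$, so by inner regularity the values of $\lambda$ on the family $\{\mathcal{F}^a\}_{a\in\mathbb{K}}$ determine $\lambda$ on every Borel subset of $\mathcal{F}'$, forcing uniqueness.

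The step I expect to be the main obstacle is this last patching argument. Since $\Phi$—and therefore $\lambda$—may be unbounded, one cannot simply subtract $\Phi(x)$ from a total mass, and the sets $\mathcal{F}^a$ are closed rather than open; the compact-exhaustion statement of Lemma~\ref{s.cover} is exactly what bridges this gap, letting local agreement of Radon measures force global agreement. The recurring subtlety underlying all three parts is that the representing measure is concentrated on Scott open \emph{filters} rather than arbitrary Scott open sets—this is precisely what makes the operator $\nabla$ compute the $\lambda$-measures of the sets $\mathcal{F}_x^{z_1,\dots,z_n}$, and hence what ties the representation to the complete alternation of $\Phi$.
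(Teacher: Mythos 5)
Your proposal is correct, and for existence and necessity it follows the paper's route: existence is exactly the lead-in (Lemma~\ref{ca.char}, Theorem~\ref{m.cm}, the estimate~(\ref{lambda.a}), and Theorem~\ref{t.approx} with $\mathcal{H}'=\mathcal{F}'$), while your indicator-function induction on filters is just a written-out version of the paper's ``easily verified'' identity~(\ref{nabla.lambda}); your extra appeal to Proposition~\ref{Phi.rep} to confirm that $\Phi$ is a conjugate Scott function is a reasonable way to cover a point the paper leaves implicit. Where you genuinely diverge is uniqueness. The paper argues directly on $\mathcal{F}'$: it notes that the sets $\mathcal{F}_x\setminus\langle V_1,\ldots,V_n\rangle_{\mathcal{F}}^*$ with $n\ge 1$ form an open base for the LCH space $\mathcal{F}'$ and then repeats the MCT/inclusion--exclusion scheme of Lemma~\ref{mu.unique}, with~(\ref{nabla.lambda}) pinning down the measures of the sets $\mathcal{F}_x^{z_1,\ldots,z_n}$. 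You instead restrict $\lambda$ to the compact pieces $\mathcal{F}^a$, observe that each restriction represents $\varphi_a$ over $\mathcal{F}$ and hence is determined by the uniqueness half of Theorem~\ref{m.cm}, and then patch using Lemma~\ref{s.cover} (every compact subset of $\mathcal{F}'$ lies in some $\mathcal{F}^z$) together with inner regularity on opens and outer regularity on Borel sets. Both are valid: the paper's argument is self-contained at the level of the open base and avoids any statement about restrictions of measures, whereas your reduction reuses the already-proved bounded case and localizes the unboundedness of $\Phi$ cleanly; its only cost is the (standard, but worth a sentence) verification that the restriction of a Radon measure on $\mathcal{F}'$ to the compact set $\mathcal{F}^a$, viewed on the compact space $\mathcal{F}$, is again Radon, so that the uniqueness assertion of Theorem~\ref{m.cm} indeed applies to it.
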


\begin{proof}
If such a measure $\lambda$ exists,
it is easily verified that
\begin{equation}\label{nabla.lambda}
\nabla_{z_1,\ldots,z_n}\Phi(x) =
-\lambda(\mathcal{F}_x^{z_1,\ldots,z_n}),
\end{equation}
which implies that $\Phi$ is complete alternating.
An open base for the LCH $\mathcal{F}'$ consists of open subsets of
the form
$\mathcal{F}_x\setminus\langle V_1,\ldots,V_n\rangle_{\mathcal{F}}^*$
with $n\ge 1$,
and similarly to the proof of Lemma~\ref{mu.unique}
the uniqueness of $\lambda$
is implied by (\ref{nabla.lambda}).
\end{proof}

\begin{remark}\label{m.ca.rem}
When $\Phi$ is normalized
[i.e., $\sup_{x\in \mathbb{K}}\Phi(x) = 1$],
we can introduce
a nondegenerate completely monotone Scott function
$\varphi(x) = 1 - \Phi(x)$, $x\in \mathbb{K}$.
Then the representation $\mu$ of Theorem~\ref{m.cm} for $\varphi$
satisfies $\mu(\{\mathbb{K}\}) = 0$,
and coincides with $\lambda$ of Theorem~\ref{m.ca} for $\Phi$
by uniqueness.
The normalization
implies that $\lambda(\mathcal{F}') = 1$;
in general, we have
$\lambda(\mathcal{F}') = \sup_{x\in \mathbb{K}}\Phi(x)$.
\end{remark}

Recall the class $\mathcal{K}$ of compact sets in
Example~\ref{class.k}.
By means of Theorem~\ref{hofmann.mislove}
we can identify $\mathcal{F}'$
with the class of nonempty closed subsets of $R$.
Thus, the representation $\lambda$ of $\Phi$ in Theorem~\ref{m.ca}
corresponds to that of Theorem~\ref{choquet.1}.

\section{Choquet theorems for sup-difference operators}
\label{sup.mono}

Recall that
$\check{\mathbb{K}} = \mathbb{K}\cup\{\hat{0}\}$
is the one-point compactification of $\mathbb{K}$,
and that $\check{\mathcal{S}} = \mathrm{Scott}(\mathbb{K})$ is a continuous
lattice with the bottom element $\varnothing$.
In the representation theorems below
we consider the subset
$\check{\mathcal{Q}}
:= \{\check{\mathbb{K}}\setminus\langle z\rangle: z\in \check{\mathbb{K}}\}$
of $\check{\mathcal{S}}$,
and introduce an open subbase for $\check{\mathcal{Q}}$ consisting of
$\check{\mathcal{Q}}_x = \{\check{\mathbb{K}}\setminus\langle z\rangle:
x\not\le z\}$,
$x\in \mathbb{K}$,
and $\check{\mathcal{Q}}
\setminus\langle U\rangle^*$,
$U\in\mathcal{S}$.

\begin{lemma}\label{p.homeo}
The map
$\Psi(x) = \check{\mathbb{K}}\setminus\langle x\rangle$
is a homeomorphism
between $\check{\mathbb{K}}$ and $\check{\mathcal{Q}}$.
\end{lemma}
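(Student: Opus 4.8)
The plan is to show that $\Psi$ is a continuous open bijection between the compact Hausdorff space $\check{\mathbb{K}}$ (the continuous lattice $\check{\mathbb{K}}$ with its Lawson topology) and $\check{\mathcal{Q}}$ topologized by the declared subbase. First I would dispose of the bijectivity: surjectivity is built into the definition $\check{\mathcal{Q}}=\{\check{\mathbb{K}}\setminus\langle z\rangle:z\in\check{\mathbb{K}}\}$, while injectivity follows since the principal ideal $\langle z\rangle$ in $\check{\mathbb{K}}$ recovers $z$ by antisymmetry, so $\Psi(x)=\Psi(x')$ gives $\langle x\rangle=\langle x'\rangle$ and hence $x=x'$. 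Along the way it is worth recording the bookkeeping facts that for $z\in\mathbb{K}$ the set $\check{\mathbb{K}}\setminus\langle z\rangle$ equals $\mathbb{K}\setminus\langle z\rangle$ and is Scott open in $\mathbb{K}$ (the principal ideal $\langle z\rangle$ being a Scott-closed lower set), that $\Psi(\hat 0)=\mathbb{K}$ and $\Psi(\hat 1)=\varnothing$, and hence that $\check{\mathcal{Q}}\subseteq\check{\mathcal{S}}$.

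The crucial translation is the following: for $U\in\mathcal{S}$ and $z\in\check{\mathbb{K}}$ one has $U\not\subseteq\check{\mathbb{K}}\setminus\langle z\rangle$ if and only if $U\cap\langle z\rangle\neq\varnothing$, and since $U$ is an upper set missing $\hat 0$ this happens precisely when $z\in U$. Granting this, I would check continuity of $\Psi$ by pulling back the two families of subbasic open sets of $\check{\mathcal{Q}}$: $\Psi^{-1}(\check{\mathcal{Q}}_x)=\{z\in\check{\mathbb{K}}:x\not\le z\}=\check{\mathbb{K}}\setminus\langle x\rangle^*$ and $\Psi^{-1}(\check{\mathcal{Q}}\setminus\langle U\rangle^*)=\{z\in\check{\mathbb{K}}:z\in U\}=U$. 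By Definition~\ref{topology.def} the first is a subbasic Lawson-open set of $\check{\mathbb{K}}$ and the second is Scott open in $\mathbb{K}$, hence open in $\check{\mathbb{K}}$; so $\Psi$ is continuous.

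For openness I would push the subbasic Lawson-open sets of $\check{\mathbb{K}}$ forward through the bijection $\Psi$. A lower set $\check{\mathbb{K}}\setminus\langle y\rangle^*$ maps to $\{\check{\mathbb{K}}\setminus\langle z\rangle:y\not\le z\}$, which is $\check{\mathcal{Q}}_y$ if $y\in\mathbb{K}$ and $\varnothing$ if $y=\hat 0$. A Scott open subset of $\check{\mathbb{K}}$ is either $\check{\mathbb{K}}$ itself (the only Scott open set containing $\hat 0$), or $\varnothing$, or a nonempty Scott open set avoiding $\hat 0$, which is then a member $U$ of $\mathcal{S}$; in the first two cases $\Psi$ sends it to $\check{\mathcal{Q}}$ or $\varnothing$ respectively, and in the last case the translation above gives $\Psi(U)=\{\check{\mathbb{K}}\setminus\langle z\rangle:z\in U\}=\check{\mathcal{Q}}\setminus\langle U\rangle^*$. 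Every image is thus open in $\check{\mathcal{Q}}$, so $\Psi$ is open and therefore a homeomorphism.

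I do not expect a genuine obstacle here; the whole proof is a translation exercise, and the one place demanding care is the adjoined bottom element $\hat 0$ (and dually the top $\hat 1$): one must keep track of $\Psi(\hat 0)=\mathbb{K}$ on the $\check{\mathcal{Q}}$ side and of the way the Scott open sets of $\check{\mathbb{K}}$ reduce to $\{\varnothing,\check{\mathbb{K}}\}\cup\mathcal{S}$, so that the degenerate subbasic sets are matched correctly. As an alternative to verifying openness by hand, once continuity of $\Psi$ is in place one could instead invoke compactness of $\check{\mathbb{K}}$ in the Lawson topology together with Hausdorffness of $\check{\mathcal{Q}}$ (itself a consequence of the $T_0$-separation of the Scott topology of $\mathbb{K}$ and the definition of the subbase) and the standard fact that a continuous bijection from a compact space onto a Hausdorff space is a homeomorphism.
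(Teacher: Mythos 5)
Your proof is correct and follows essentially the same route as the paper: the two translations $\Psi(U)=\check{\mathcal{Q}}\setminus\langle U\rangle^*$ for $U\in\mathcal{S}$ and $\Psi(\check{\mathbb{K}}\setminus\langle x\rangle^*)=\check{\mathcal{Q}}_x$ are exactly the paper's argument, with your version merely spelling out the bijectivity, the degenerate cases at $\hat 0$ and $\hat 1$, and the continuity/openness bookkeeping that the paper leaves implicit.
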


\begin{proof}
Given any $U\in\mathcal{S}$,
we have $z\in U$ if and only if
$U\not\subseteq \check{\mathbb{K}}\setminus\langle z\rangle$,
or equivalently,
$\Psi(U) =
\check{\mathcal{Q}}\setminus\langle U\rangle^*$.
Similarly for any $x\in \mathbb{K}$ we have
$z\in \check{\mathbb{K}}\setminus\langle x\rangle^*$
if and only if $x\in\check{\mathbb{K}}\setminus\langle z\rangle$,
which implies that
$\Psi(\check{\mathbb{K}}\setminus\langle x\rangle^*)
= \check{\mathcal{Q}}_x$.
Together
we have shown that $\Psi$ is a homeomorphism.
\end{proof}

We dually define the operator $\Delta_A$ on a $\vee$-semilattice,
and call it a \emph{successive $\vee$-difference}.
It can be constructed with the $\vee$-difference operator
$\Delta_{z_1}\phi(x) = \phi(x) - \phi(x\vee z_1)$,
and recursively by
$\Delta_{z_1,\ldots,z_n} \phi
= \Delta_{z_n}(\Delta_{z_1,\ldots,z_{n-1}} \phi)$
for $n = 2,3,\ldots$.

\begin{definition}
An increasing function $\phi$ is called \emph{completely $\vee$-alternating}
if $\Delta_A \phi \le 0$ holds for any nonempty finite subset $A$.
Similarly a decreasing function $\phi$ is called \emph{completely $\vee$-monotone}
if $\Delta_A \phi \ge 0$ holds.
\end{definition}

Consider the complete semilattice
$\check{\mathbb{K}}' := \check{\mathbb{K}}\setminus\{\hat{1}\}$.
Then the Lawson topology of $\check{\mathbb{K}}'$ is compact
since $\{\hat{1}\}$ is an open subset of $\check{\mathbb{K}}$.
In fact, any continuous complete semilattice
(i.e., complete semilattice which is also a domain)
is compact Hausdorff in the Lawson topology
(cf. Section~III-1 of~\cite{scott}).
The map $\Psi$ in Lemma~\ref{p.homeo} is homeomorphic from
$\check{\mathbb{K}}'$ to the compact subset
$\mathcal{Q} := \check{\mathcal{Q}}\setminus\{\varnothing\}$
of $\mathcal{S}$.

\begin{lemma}\label{m.sup-ca}
A completely $\vee$-alternating Scott function
$\varphi$ on $\mathbb{K}$ has a representation $\mu$ on $\mathcal{Q}$.
\end{lemma}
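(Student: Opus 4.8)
The plan is to mirror the strategy already used for Theorem~\ref{m.cm}, namely to approximate $\varphi$ by finitely-supported Radon measures on the closed subset $\mathcal{Q}$ of $\mathcal{S}$ and then invoke Theorem~\ref{s.approx}. First I would fix a finite $\vee$-subsemilattice $G$ of $\check{\mathbb{K}}'$ (or rather the image under $\Psi$ of a finite $\vee$-subsemilattice of $\mathbb{K}$) and restrict $\varphi$ to it. On this finite sup-semilattice the dual of the M\"obius inversion used in Proposition~\ref{cm.r} and the discussion following it applies: complete $\vee$-alternation of $\varphi$ means $-\varphi$ is ``completely $\vee$-monotone'' in the dual sense, so its $\vee$-M\"obius inverse is nonnegative and yields a genuine (nonnegative) measure $m$ on $G$ with $\varphi(x)=\varphi(\hat 1)-m(\langle x\rangle^*_G)$, where $\langle x\rangle^*_G$ is the principal upper set in $G$. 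This is the analogue of the passage from $\nabla_A$ to $\langle x\rangle$ in Section~\ref{sec.cm}, with meets replaced by joins.

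Next I would transport this measure onto $\mathcal{Q}$ using Lemma~\ref{p.homeo}: for each $z\in G$ pick a distinct element $U_z\in\mathcal{S}$ with the property that, restricted to the relevant finite set, $\check{\mathcal{Q}}\setminus\langle U_z\rangle^*$ separates the $z$'s correctly, i.e.\ so that the preimages of the subbasic sets $\mathcal{S}_x$ under the inclusion $\mathcal{Q}\hookrightarrow\mathcal{S}$ agree with $\langle x\rangle^*_G$ in the appropriate sense. Concretely, since $\mathcal{S}$ is a base for the Scott topology and $\Psi(x)=\check{\mathbb{K}}\setminus\langle x\rangle$, the relation $x\in\check{\mathbb{K}}\setminus\langle z\rangle$ is exactly $x\not\le z$; so placing mass $r(z)$ (the $\vee$-M\"obius inverse) at $\Psi(z)\in\mathcal{Q}$ for each $z$ in the finite set gives a discrete $\mu_G$ supported on $\mathcal{Q}$ with $\mu_G(\mathcal{S}_x)=\sum_{z:\,x\not\le z}r(z)=\varphi(x)$ for all $x$ in the finite subsemilattice. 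This shows $\varphi$ is approximated over $\mathcal{Q}$ in the sense of Definition~\ref{approx.def}, and Theorem~\ref{s.approx} then produces the desired $\mu\in M^+_1(\mathcal{S})$ representing $\varphi$ over $\mathcal{Q}$.

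The main obstacle I anticipate is the bookkeeping in the previous paragraph: verifying that one can choose the separating elements $U_z=\Psi(z)$ (or suitable refinements from $\mathcal{S}$) so that the identity $\mu_G(\mathcal{Q}_x)=\varphi(x)$ holds simultaneously for \emph{all} $x$ in the finite subsemilattice, not merely for the generators, and confirming that the $\vee$-M\"obius inverse is nonnegative precisely because $\varphi$ is completely $\vee$-alternating. The nonnegativity is the dual of the inclusion-exclusion computation \eqref{nabla.r}: one shows $\Delta_A\varphi(x)=-\,m\big(\langle x\rangle^*_G\setminus\langle A\rangle^*_G\big)$, so $\Delta_A\varphi\le 0$ for every $A$ is equivalent to $m\ge 0$ on the atoms. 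Once that is in place, the rest is a routine translation of the existence argument for Theorem~\ref{m.cm}, and no separability or uniqueness considerations enter at this stage since only existence of the representation $\mu$ on $\mathcal{Q}$ is asserted.
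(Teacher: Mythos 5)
Your overall strategy---dual M\"obius inversion on a finite sup-subsemilattice, transporting the resulting atoms to $\mathcal{Q}$ via $\Psi$, and an appeal to Theorem~\ref{s.approx}---is the same as the paper's, but the central identity you assert is false as stated, and this is a genuine gap. Take a finite sup-subsemilattice $G\subseteq\mathbb{K}$ with $\hat{1}\in G$, let $r$ be the M\"obius inverse of $\psi=1-\varphi$ on the dual poset $G^*$, and let $\mu_G$ place mass $r(z)$ at $\Psi(z)\in\mathcal{Q}$. Then
$\mu_G(\mathcal{S}_x)=\sum_{z\in G:\,x\not\le z}r(z)
=\bigl(\sum_{z\in G}r(z)\bigr)-\bigl(1-\varphi(x)\bigr)$,
and the total mass $\sum_{z\in G}r(z)$ is \emph{not} $1$ in general: for instance, if $G$ has a least element $x_0$, the total is $1-\varphi(x_0)$, so $\mu_G(\mathcal{S}_x)=\varphi(x)-\varphi(x_0)\neq\varphi(x)$ whenever $\varphi(x_0)>0$. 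Consequently your net does not approximate $\varphi$ in the sense of Definition~\ref{approx.def}, and Theorem~\ref{s.approx} cannot be invoked as you propose.

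The missing device is precisely the first step of the paper's proof: extend $\varphi$ to $\check{\varphi}$ on $\check{\mathbb{K}}$ by $\check{\varphi}(\hat{0})=0$, verify that the extension is still completely $\vee$-alternating (via $\Delta_A\check{\varphi}(\hat{0})\le\Delta_A\varphi(\bigwedge A)\le 0$), and work with a finite sup-subsemilattice $\check{F}\subseteq\check{\mathbb{K}}$ containing both $\hat{0}$ and $\hat{1}$. The atom at $\Psi(\hat{0})=\check{\mathbb{K}}\setminus\langle\hat{0}\rangle=\mathbb{K}\in\mathcal{Q}$ then absorbs exactly the deficit mass, the total becomes $\check{\varphi}^*(\hat{0})=1$, and the approximation identity $\mu_F(\mathcal{Q}_x)=\check{\varphi}^*(\hat{0})-\check{\varphi}^*(x)=\varphi(x)$ holds for all $x$ in the subsemilattice; the complete $\vee$-alternation of the extension is what guarantees that this extra atom has nonnegative mass, so $\mu_F$ is a genuine measure. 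Incidentally, the ``bookkeeping'' you anticipate about choosing separating elements $U_z$ is not an issue here: unlike the proof of Theorem~\ref{m.cm}, where one selects open filters $V_z$ from $\mathcal{F}$, the atoms in this construction are literally the points $\Psi(z)\in\mathcal{Q}$, so no selection is needed.
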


\begin{proof}
We can naturally extend $\varphi$
to the Scott function $\check{\varphi}$ on $\check{\mathbb{K}}$
by setting $\check{\varphi}(\hat{0}) = 0$.
For any nonempty finite subset $A$ of $\mathbb{K}$
we can observe that
\begin{math}
  \Delta_A\check{\varphi}(\hat{0})
  \le \Delta_A\varphi\left(\bigwedge A\right) \le 0
\end{math}
where $\bigwedge A$ is the greatest lower bound of $A$;
thus $\check{\varphi}$ is also completely $\vee$-alternating.
Let $\check{F}$ be a finite sup-subsemilattice of $\check{\mathbb{K}}$,
and let $\check{\varphi}^*(x) = 1 - \check{\varphi}(x)$
be a completely monotone function on the dual poset $\check{F}^*$.
Hence we can introduce the M\"{o}bius inverse $r^*$
of $\check{\varphi}^*$ on $\check{F}^*$.
Without loss of generality we assume
$\hat{0}, \hat{1} \in \check{F}$,
and note that $r^*(\hat{1}) = 0$.

For $F = \check{F}\setminus\{\hat{0}\}$,
we can construct a discrete measure $\mu_F$ on $\mathcal{Q}$
by setting $\mu_F(\{\check{\mathbb{K}}\setminus\langle z\rangle\}) = r^*(z)$
for each $z\in \check{F}\setminus\{\hat{1}\}$.
Then we obtain
\begin{equation*}
  \mu_F(\mathcal{Q}_x)
  = \sum_{z\in \check{F}\setminus\langle x\rangle^*} r^*(z)
  = \check{\varphi}^*(\hat{0}) - \check{\varphi}^*(x)
  = \varphi(x)
\end{equation*}
for each $x\in F$.
Hence, $\varphi$ is approximated over $\mathcal{Q}$,
and it has a representation $\mu$ on $\mathcal{Q}$
by Theorem~\ref{s.approx}.
\end{proof}

Let $\Phi$ be a completely $\vee$-monotone conjugate on $\mathbb{K}$.
Then $\varphi_a(x) = \Phi(a) - \Phi(a\vee x)$
is a completely $\vee$-alternating Scott function
for each $a\in \mathbb{K}^*$,
and the corresponding representation $\lambda_a$ of $\varphi_a$ over
$\mathcal{Q}$ may be viewed as a measure on
the LCH $\mathcal{Q}' := \mathcal{Q}\setminus\{\mathbb{K}\}$
since $\varphi_a$ is nondegenerate (see Definition~\ref{degen.def}).
Similarly to (\ref{lambda.a}) we can observe that
$\lambda_a(\mathcal{Q}^x) = \Phi(a\vee x)\le\Phi(x)$,
for which the equality holds if $a \ge^* x$.
Hence, by Theorem~\ref{t.approx} we have established

\begin{corollary}\label{m.sup-cm}
For any completely $\vee$-monotone conjugate $\Phi$ on $\mathbb{K}$,
there exists some $\lambda\in M^+(\mathcal{Q}')$ such that
$\Phi(x) = \lambda(\mathcal{Q}^x)$ for any $x\in \mathbb{K}$.
\end{corollary}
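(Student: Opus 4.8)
The plan is to mimic the passage from Theorem~\ref{m.cm} to Theorem~\ref{m.ca}, but on the join side: I would resolve $\Phi$ into a directed family of nondegenerate completely $\vee$-alternating Scott functions, represent each of them via Lemma~\ref{m.sup-ca}, and then glue the representing measures together using Theorem~\ref{t.approx} with $\mathcal{H} = \mathcal{Q}$. Since $\mathcal{Q}$ is a compact subset of the Hausdorff space $\mathcal{S}$ by Lemma~\ref{p.homeo}, it is closed in $\mathcal{S}$, so $\mathcal{Q}' = \mathcal{Q}\setminus\{\mathbb{K}\}$ is a closed subset of the LCH space $\mathcal{S}'$, which is the shape the hypotheses of Theorem~\ref{t.approx} demand.

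First, for each $a\in\mathbb{K}^*$ I would set $\varphi_a(x) = \Phi(a) - \Phi(a\vee x)$. It is nonnegative because $a\vee x\ge a$ and $\Phi$ is decreasing; it is Scott-continuous because $x\mapsto a\vee x$ is Scott-continuous (Proposition~\ref{scott}) and $\Phi$ is a conjugate Scott function, so $\varphi_a$ is a Scott function. The identity $\Delta_{z}\varphi_a(x) = \Phi(a\vee x\vee z) - \Phi(a\vee x) = -\Delta_{z}\Phi(a\vee x)$ iterates to $\Delta_B\varphi_a(x) = -\Delta_B\Phi(a\vee x)$ for every nonempty finite $B$, so complete $\vee$-monotonicity of $\Phi$ forces $\Delta_B\varphi_a\le 0$; hence $\varphi_a$ is completely $\vee$-alternating, which is the join-analogue of the role $\varphi_a$ plays in Lemma~\ref{ca.char}. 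Finally $\varphi_a(a) = \Phi(a) - \Phi(a) = 0$, so $\inf_{x\in\mathbb{K}}\varphi_a(x) = 0$ and $\varphi_a$ is nondegenerate in the sense of Definition~\ref{degen.def}.

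Next, Lemma~\ref{m.sup-ca} furnishes a representation $\lambda_a$ of $\varphi_a$ over $\mathcal{Q}$; since $\varphi_a$ is nondegenerate, $\lambda_a$ puts no mass at $\mathbb{K}$ and may be regarded as a Radon measure on $\mathcal{Q}'$, hence as one on $\mathcal{S}'$ supported by $\mathcal{Q}'$. Using $\lambda_a(\mathcal{Q}_{\hat{1}}) = \varphi_a(\hat{1}) = \Phi(a)$ together with $\mathcal{Q}_{\hat{1}} = \mathcal{Q}$, and $\lambda_a(\mathcal{Q}_x) = \varphi_a(x)$, I would compute $\lambda_a(\mathcal{Q}^x) = \Phi(a) - \varphi_a(x) = \Phi(a\vee x)$, so $\lambda_a(\mathcal{Q}^x)\le\Phi(x)$ always, with equality precisely when $a\vee x = x$, i.e.\ when $a\le x$ in $\mathbb{K}$ (equivalently $a\ge^* x$ in $\mathbb{K}^*$).

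Finally I would index the net $\{\lambda_a\}$ by the poset $\mathbb{K}^*$, which is directed because $\mathbb{K}$ is a lattice ($a\wedge b$ is an upper bound of $a$ and $b$ in $\mathbb{K}^*$). For a fixed $x$, every $a\succeq x$ in $\mathbb{K}^*$ satisfies $\lambda_a(\mathcal{Q}^x) = \Phi(x)$, while $\lambda_a(\mathcal{Q}^x)\le\Phi(x)$ holds for all $a$; thus the hypotheses of Theorem~\ref{t.approx} are met with $\mathcal{H} = \mathcal{Q}$, and that theorem produces $\lambda\in M^+(\mathcal{Q}')$ with $\Phi(x) = \lambda(\mathcal{Q}^x)$ for all $x\in\mathbb{K}$. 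I expect no deep obstacle: the only points demanding care are the dualization bookkeeping — that the join (not the meet) must appear in $\varphi_a$, and that the index set $\mathbb{K}^*$ must be oriented so that ``eventually'' means ``$a$ small in $\mathbb{K}$'' — and the verification that $\varphi_a$ is genuinely Scott-continuous and nondegenerate, since those are exactly what license passing to $\mathcal{Q}'$ and appealing to Theorem~\ref{t.approx}. All the genuinely analytic content (compactness of $\mathcal{Q}$, Urysohn separation, relative compactness of the net of measures) is already packaged inside Lemma~\ref{m.sup-ca} and Theorem~\ref{t.approx}.
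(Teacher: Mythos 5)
Your proposal is correct and follows essentially the same route as the paper: the same auxiliary functions $\varphi_a(x)=\Phi(a)-\Phi(a\vee x)$ indexed by $a\in\mathbb{K}^*$, representation of each $\varphi_a$ over $\mathcal{Q}$ via Lemma~\ref{m.sup-ca}, the bound $\lambda_a(\mathcal{Q}^x)=\Phi(a\vee x)\le\Phi(x)$ with equality for $a\ge^* x$, and the passage to the limit through Theorem~\ref{t.approx}. The only (harmless) overstatement is the word ``precisely'' in your equality criterion, since $\Phi(a\vee x)=\Phi(x)$ may also occur without $a\vee x=x$; the sufficient condition $a\le x$ is all the argument needs.
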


Let $\Psi$ be the homeomorphism of Lemma~\ref{p.homeo}.
Then the measure $\mu$ of Lemma~\ref{m.sup-ca}
induces a Radon measure $\Lambda$ on
the compact space $\check{\mathbb{K}}'$
by setting $\Lambda(B) = \mu(\Psi(B))$
for any Borel-measurable subset $B$ of $\check{\mathbb{K}}'$.
Clearly it satisfies
$\Lambda(\check{\mathbb{K}}\setminus\langle x\rangle^*) = \varphi(x)$,
$x\in \mathbb{K}$.
Conversely,
$\varphi(x) = \Lambda(\check{\mathbb{K}}\setminus\langle x \rangle^*)$
is an increasing function on $\mathbb{K}$,
and
$\Delta_A\varphi(x) =
-\Lambda(\langle x\rangle^*\setminus\langle A\rangle^*)$
implies that
$\varphi$ is completely $\vee$-alternating.
An increasing net
$\check{\mathbb{K}}\setminus\langle z \rangle^*$,
$z\in\llangle x \rrangle$,
converges to
$\check{\mathbb{K}}\setminus\langle x \rangle^*$,
and therefore,
by MCT we obtain
$\Lambda(\check{\mathbb{K}}\setminus\langle x \rangle^*)
= \sup_{z\in\llangle x \rrangle}
\Lambda(\check{\mathbb{K}}\setminus\langle z \rangle^*)$.
Thus, $\varphi$ is a Scott function.

Similarly we can associate the measure $\lambda$ of
Corollary~\ref{m.sup-cm} with the Radon measure $\Lambda(B)
= \lambda(\Psi(B))$ on
the LCH $\mathbb{K}' := \mathbb{K}\setminus\{\hat{1}\}$, and obtain
$\Lambda(\langle x\rangle^*) = \Phi(x)$
and $\Lambda(\langle x\rangle^*\setminus\langle A\rangle^*)
= \Delta_A\Phi(x)$.
Conversely,
$\Phi(x) = \Lambda(\langle x\rangle^*)$
is a completely $\vee$-monotone conjugate.

\begin{lemma}\label{R-rep.unique}
(i) A Radon measure $\Lambda$ on the compact space $\check{\mathbb{K}}'$
is uniquely determined by $\Lambda$ on the collection of subsets
$\check{\mathbb{K}}\setminus\langle x\rangle^*$ and
$\langle x\rangle^*\setminus\langle A\rangle^*$
where $x\in \mathbb{K}$ and $A$ is a nonempty finite subset of $\mathbb{K}$.

(ii) A Radon measure $\Lambda$ on the LCH space $\mathbb{K}'$
is uniquely determined by the values of $\Lambda$ on the collection of subsets
$\langle x\rangle^*\setminus\langle A\rangle^*$
where $x\in \mathbb{K}$ and $A$ is a nonempty finite subset of $\mathbb{K}$.
\end{lemma}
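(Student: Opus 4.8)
The plan is to mimic the argument of Lemma~\ref{mu.unique} (and the uniqueness part of Theorem~\ref{m.ca}), transporting everything through the homeomorphism $\Psi$ of Lemma~\ref{p.homeo} so that the sets $\check{\mathbb{K}}\setminus\langle x\rangle^*$ and $\langle x\rangle^*\setminus\langle A\rangle^*$ become the subsets $\mathcal{Q}_x$ and $\mathcal{Q}_x\setminus\langle \Psi(A)\rangle^*$ of $\check{\mathcal{Q}}$ already analyzed in Section~\ref{sup.mono}. Concretely, for $x\in\mathbb{K}$ and a nonempty finite $A=\{z_1,\ldots,z_n\}\subseteq\mathbb{K}$, set $U_i=\llangle z_i\rrangle^*$ (or more simply work with $\langle z_i\rangle^*$) and recall that $\Psi(\langle x\rangle^*\setminus\langle A\rangle^*) = \mathcal{Q}_x\setminus\langle \Psi(z_1),\ldots,\Psi(z_n)\rangle^*$. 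Thus the two families in (i) (resp.\ (ii)) correspond under $\Psi$ to the open subbasic sets of $\check{\mathcal{Q}}$ (resp.\ of $\mathcal{Q}'$), and it suffices to show that a Radon measure on $\check{\mathcal{Q}}$ (resp.\ on $\mathcal{Q}'$) is determined by its values on those subbasic sets.

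First I would handle part (i). The subbasic open sets of $\check{\mathcal{Q}}$ are $\check{\mathcal{Q}}_x$, $x\in\mathbb{K}$, and $\check{\mathcal{Q}}\setminus\langle U\rangle^*$, $U\in\mathcal{S}$; but since $\mathcal{F}=\mathrm{OFilt}(\mathbb{K})$ is a base for the Scott topology of $\mathbb{K}$ (Section~\ref{domains}), every $\langle U\rangle^*$ is an intersection of closed sets $\langle V\rangle^*$ with $V\in\mathcal{F}$ Scott-open filters; and $\langle V\rangle^*_{\check{\mathcal{Q}}}$ in turn is the limit of the decreasing net $\langle\llangle z_1\rrangle^*,\ldots,\llangle z_n\rrangle^*\rangle^*$ indexed by $(z_1,\ldots,z_n)\in V\times\cdots\times V$ inversely ordered coordinatewise, exactly as in Lemma~\ref{mu.unique}. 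By MCT the measure of $\check{\mathcal{Q}}_x\setminus\langle V_1,\ldots,V_n\rangle^*$ is the supremum of the measures of $\check{\mathcal{Q}}_x\setminus\langle\llangle z_1\rrangle^*,\ldots,\llangle z_n\rrangle^*\rangle^*$, which are among the prescribed values (after pulling back through $\Psi$). Intersections of sets of the form $\check{\mathcal{Q}}_x$ are again of that form ($\check{\mathcal{Q}}_x\cap\check{\mathcal{Q}}_y=\check{\mathcal{Q}}_{x\vee y}$, since $\mathbb{K}$ is a sup-semilattice), intersections of the "minus $\langle\cdots\rangle^*$" factors just concatenate lists, and finite unions are handled by inclusion–exclusion; since these sets form a base for $\check{\mathcal{Q}}$, every open set has its measure determined, and by inner regularity so does every Borel set. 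This gives (i).

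For part (ii) the argument is the same on the LCH space $\mathcal{Q}'=\mathcal{Q}\setminus\{\mathbb{K}\}$, which via $\Psi$ is $\mathbb{K}'=\mathbb{K}\setminus\{\hat{1}\}$: an open base consists of the sets $\mathcal{Q}_x\setminus\langle V_1,\ldots,V_n\rangle^*$ with $n\ge1$ (the point $\mathbb{K}$, i.e.\ $\hat{1}$, has been removed, so the bare sets $\mathcal{Q}_x$ are no longer needed — this parallels the remark in the proof of Theorem~\ref{m.ca} that for the LCH space $\mathcal{F}'$ one only needs $n\ge1$), and each such set has its measure determined by the prescribed values of $\Lambda$ on the $\langle x\rangle^*\setminus\langle A\rangle^*$ via the same MCT-plus-inclusion–exclusion computation. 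A Radon measure on an LCH space is determined by its values on a base of relatively compact open sets by inner regularity, completing (ii).

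The only mild subtlety — and the step I would be most careful about — is the passage from the Scott-open filters $V\in\mathcal{F}$ to the prescribed subbasic data: one must check that $\langle\llangle z_1\rrangle^*,\ldots,\llangle z_n\rrangle^*\rangle^*$ pulls back under $\Psi$ to a set of the allowed form $\langle x\rangle^*\setminus\langle A\rangle^*$ (it does, with $x$ the relevant argument and $A=\{z_1,\ldots,z_n\}$, because $\llangle z\rrangle^*$ and $\langle z\rangle^*$ generate the same principal-filter data after taking suprema over directed sets, by the domain property $z=\sup\llangle z\rrangle$ and Scott-continuity of $\Lambda(\langle\cdot\rangle^*)$), together with the routine verification that the described family really is a base and is stable under finite intersection — both entirely analogous to Lemma~\ref{mu.unique}, so no genuinely new obstacle arises.
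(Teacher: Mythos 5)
Your overall toolkit (reduce to basic Lawson-open sets, MCT along way-below approximations, inclusion--exclusion, regularity) is the right one, but the translation through $\Psi$ contains a genuine error that breaks the reduction to the prescribed data. Since $\Psi(w)=\check{\mathbb{K}}\setminus\langle w\rangle$ is order-\emph{reversing}, the identity you rely on is false: for $A=\{z_1,\ldots,z_n\}$ one has
$\Psi\bigl(\langle x\rangle^*\setminus\langle A\rangle^*\bigr)=\bigl(\bigcap_{i}\check{\mathcal{Q}}_{z_i}\bigr)\setminus\check{\mathcal{Q}}_x$,
not $\check{\mathcal{Q}}_x\setminus\langle\Psi(z_1),\ldots,\Psi(z_n)\rangle^*$ (the latter pulls back to $\{w: x\not\le w,\ w\not\le z_i\ \forall i\}$; already for $x=\hat{1}$ the two sides differ). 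Consequently, when you copy the scheme of Lemma~\ref{mu.unique} and approximate $\check{\mathcal{Q}}_x\setminus\langle V_1,\ldots,V_n\rangle^*$ by $\check{\mathcal{Q}}_x\setminus\langle\llangle z_1\rrangle^*,\ldots,\llangle z_n\rrangle^*\rangle^*$, these sets pull back to $\bigl(\bigcap_i\llangle z_i\rrangle^*\bigr)\setminus\langle x\rangle^*$, i.e.\ a Scott-open filter minus a principal upper set. That is \emph{not} among the prescribed values $\Lambda(\check{\mathbb{K}}\setminus\langle x\rangle^*)$, $\Lambda(\langle x\rangle^*\setminus\langle A\rangle^*)$; it is essentially a basic open set again, so your reduction is circular, and the concluding ``subtlety'' paragraph, which asserts the pullback is of the allowed form with $A=\{z_1,\ldots,z_n\}$, is exactly where the mistake sits. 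A secondary slip in the same spirit: $\check{\mathcal{Q}}_x\cap\check{\mathcal{Q}}_y$ is \emph{not} $\check{\mathcal{Q}}_{x\vee y}$ (that is the union; the intersection is $\Psi(\check{\mathbb{K}}\setminus\langle\{x,y\}\rangle^*)$), so your claimed stability of the base under intersections also fails as stated.

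What is missing is precisely the central step of the paper's argument, which works directly on $\check{\mathbb{K}}'$ (equivalently, after your $\Psi$-transport): for a basic open set $U\setminus\langle A\rangle^*$ with $U$ an \emph{open filter}, approximate the filter part from \emph{inside} by the increasing net $\llangle z\rrangle^*\setminus\langle A\rangle^*$, $z\in U$, keeping the removed set $\langle A\rangle^*$ fixed; MCT plus interpolation gives
$\Lambda(U\setminus\langle A\rangle^*)=\sup_{z\in U}\Lambda(\langle z\rangle^*\setminus\langle A\rangle^*)$,
which lands exactly on the prescribed data. One also needs the case where the open part is all of $\check{\mathbb{K}}$, handled by the disjoint decomposition $\check{\mathbb{K}}\setminus\langle A\rangle^*=\bigl(\check{\mathbb{K}}\setminus\langle x\rangle^*\bigr)\cup\bigl(\langle x\rangle^*\setminus\langle A\rangle^*\bigr)$ with $x=\bigwedge A$ (this is where the sets $\check{\mathbb{K}}\setminus\langle x\rangle^*$ in part (i) enter, and why they can be dropped in part (ii)). After that, closure of the base under finite intersections uses that a finite intersection of open filters is again an open filter and that the removed finite sets simply merge, and the rest (inclusion--exclusion, MCT over directed unions, regularity) goes through as you describe. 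Your proposal never performs this inside-approximation of the filter by principal upper sets, so as written it does not establish the lemma.
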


\begin{proof}
Observe that
$U\setminus\langle A\rangle^*$
with open filter $U$ of $\check{\mathbb{K}}$ (respectively, of $\mathbb{K}$)
and nonempty finite subset $A$ of $\mathbb{K}$
forms an open base for the Lawson topology
of $\check{\mathbb{K}}'$ (respectively, of $\mathbb{K}'$).
In what follows we only present the proof of (i),
the proof of (ii) being similar.

If $U = \check{\mathbb{K}}$ then after setting $x=\bigwedge A$
the set $\check{\mathbb{K}}\setminus\langle A\rangle^*$
is a disjoint union of
$\check{\mathbb{K}}\setminus\langle x\rangle^*$ and
$\langle x\rangle^*\setminus\langle A\rangle^*$.
Otherwise, we can assume $U\neq\check{\mathbb{K}}$,
and consider a net
\begin{math}
  \left\{
  \llangle z\rrangle^*\setminus\langle A\rangle^*
  \right\}
\end{math}
of open subsets of $\mathbb{K}$ indexed by $z\in U$,
where the index set $U$ is equipped with the dual order $\le^*$.
Then the net is increasing, and converges to
$U\setminus\langle A\rangle^*$.
By MCT we can show that
\begin{math}
  \Lambda(U\setminus\langle A\rangle^*)
  = \sup_{z\in U} \Lambda(\llangle z\rrangle^*\setminus
  \langle A\rangle^*),
\end{math}
which is equal to
\begin{math}
  \sup_{z\in U} \Lambda(\langle z\rangle^*\setminus
  \langle A\rangle^*).
\end{math}
Thus, the measure $\Lambda$ on the open subset $U\setminus\langle A\rangle^*$
is uniquely determined as in~(i).
By the inclusion-exclusion principle,
the measure $\Lambda$ of the finite union
$\bigcup_{i=1}^n U_i\setminus\langle A_i\rangle^*$
can be expressed in terms of the intersection
$\bigcap_{i=1}^n U_i\setminus\langle A_i\rangle^*=\left(\bigcap_{i=1}^n U_i\right)
\setminus\left\langle \bigcup_{i=1}^n A_i\right\rangle^*$
with open filter $\bigcap_{i=1}^n U_i$.
Hence, a Radon measure $\Lambda$
is uniquely extended to the collection
of open subsets of $\check{\mathbb{K}}'$ by MCT.
\end{proof}

Lemma~\ref{R-rep.unique} together with Lemma~\ref{m.sup-ca} and
Corollary~\ref{m.sup-cm} completes the proof of Theorem~\ref{R-rep}.
The version of Choquet theorem for completely $\cap$-monotone
capacities in Section~\ref{choquet.intro}
is a special case of Theorem~\ref{R-rep}(ii)
where the lattice $\mathbb{K}$ of Example~\ref{class.k} is
considered.

\begin{theorem}\label{R-rep}
(i)
There exists a unique Radon measure $\Lambda$ on
$\check{\mathbb{K}}'$
such that
$\varphi(x)
= \Lambda(\check{\mathbb{K}}\setminus\langle x\rangle^*)$,
$x\in \mathbb{K}$,
if and only if
$\varphi$ is a completely $\vee$-alternating Scott function.

(ii)
There exists a unique Radon measure $\Lambda$ on $\mathbb{K}'$
such that $\Phi(x) = \Lambda(\langle x\rangle^*)$,
$x\in \mathbb{K}$,
if and only if
$\Phi$ is a completely $\vee$-monotone conjugate.
\end{theorem}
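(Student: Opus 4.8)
The plan is to assemble Theorem~\ref{R-rep} from the pieces already in place, handling (i) and (ii) in parallel since the second is the ``dual LCH'' counterpart of the first. For the sufficiency (``if'') direction of~(i), I would take a completely $\vee$-alternating Scott function $\varphi$ on $\mathbb{K}$ and apply Lemma~\ref{m.sup-ca} to get a representation $\mu$ of $\varphi$ over the compact set $\mathcal{Q} = \check{\mathcal{Q}}\setminus\{\varnothing\}$, so $\varphi(x) = \mu(\mathcal{Q}_x)$ for every $x\in\mathbb{K}$. Using the homeomorphism $\Psi$ of Lemma~\ref{p.homeo} between $\check{\mathbb{K}}'$ and $\mathcal{Q}$, I push $\mu$ forward to a Radon measure $\Lambda(B) = \mu(\Psi(B))$ on the compact Hausdorff space $\check{\mathbb{K}}'$; since $\Psi(\check{\mathbb{K}}\setminus\langle x\rangle^*) = \check{\mathcal{Q}}_x$ by the identification of subbasic sets recorded just before Lemma~\ref{p.homeo}, this yields $\Lambda(\check{\mathbb{K}}\setminus\langle x\rangle^*) = \mu(\mathcal{Q}_x) = \varphi(x)$. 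For~(ii) I run the analogous argument: from a completely $\vee$-monotone conjugate $\Phi$, Corollary~\ref{m.sup-cm} supplies $\lambda\in M^+(\mathcal{Q}')$ with $\Phi(x) = \lambda(\mathcal{Q}^x)$, and transporting along the restriction of $\Psi$ to a homeomorphism $\mathbb{K}'\to\mathcal{Q}'$ produces a Radon measure $\Lambda$ on the LCH space $\mathbb{K}'$ with $\Lambda(\langle x\rangle^*) = \Phi(x)$. This is exactly the computation already carried out in the paragraph preceding Lemma~\ref{R-rep.unique}.

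For the necessity (``only if'') direction I would argue conversely: given a Radon measure $\Lambda$ on $\check{\mathbb{K}}'$ with $\varphi(x) = \Lambda(\check{\mathbb{K}}\setminus\langle x\rangle^*)$, monotonicity of $x\mapsto\langle x\rangle^*$ under the reversed order shows $\varphi$ is increasing; the identity $\Delta_A\varphi(x) = -\Lambda(\langle x\rangle^*\setminus\langle A\rangle^*)$ (obtained by inclusion--exclusion, as in Proposition~\ref{cm.r}, using $\langle x\vee\bigvee B\rangle^* = \langle x\rangle^*\cap\bigcap_{z\in B}\langle z\rangle^*$) gives $\Delta_A\varphi\le 0$ for every nonempty finite $A$, i.e.\ complete $\vee$-alternation; and applying the monotone convergence theorem for nets to the increasing net $\{\check{\mathbb{K}}\setminus\langle z\rangle^*\}_{z\in\llangle x\rrangle}$, which increases to $\check{\mathbb{K}}\setminus\langle x\rangle^*$ because $x = \sup\llangle x\rrangle$, yields $\varphi(x) = \sup_{z\ll x}\varphi(z)$, hence Scott-continuity. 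The same reasoning with $\Phi(x) = \Lambda(\langle x\rangle^*)$ on $\mathbb{K}'$, together with $\Delta_A\Phi(x) = \Lambda(\langle x\rangle^*\setminus\langle A\rangle^*)\ge 0$, proves the converse in~(ii).

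Uniqueness is then delivered by Lemma~\ref{R-rep.unique}. In case~(i) a Radon measure on $\check{\mathbb{K}}'$ is pinned down by its values on the sets $\check{\mathbb{K}}\setminus\langle x\rangle^*$ and $\langle x\rangle^*\setminus\langle A\rangle^*$; these equal $\varphi(x)$ and $-\Delta_A\varphi(x)$ respectively, so $\varphi$ determines the entire generating family and hence $\Lambda$. In case~(ii), Lemma~\ref{R-rep.unique}(ii) needs only the values on $\langle x\rangle^*\setminus\langle A\rangle^*$, and these equal $\Delta_A\Phi(x)$, so $\Phi$ again determines $\Lambda$.

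I do not expect a substantive obstacle here — the real work is already done in Lemmas~\ref{m.sup-ca}, \ref{p.homeo}, \ref{R-rep.unique} and Corollary~\ref{m.sup-cm}. The only points that need genuine care are bookkeeping ones: verifying that the pushforward of $\lambda$ is a bona fide Radon measure on the \emph{non-compact} space $\mathbb{K}'$ (immediate since $\Psi$ restricts to a homeomorphism onto $\mathcal{Q}'$ and $\Phi(x)<\infty$ makes $\Lambda$ finite on each compact $\langle x\rangle^*$), keeping the $\check{\mathbb{K}}'$ versus $\mathbb{K}'$ distinction consistent throughout, and, for uniqueness, checking that $\varphi$ (respectively $\Phi$) controls \emph{all} the sets listed in Lemma~\ref{R-rep.unique}, not merely the subbasic ones named in the statement.
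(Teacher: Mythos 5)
Your proposal is correct and follows essentially the same route as the paper: existence via Lemma~\ref{m.sup-ca} and Corollary~\ref{m.sup-cm} pushed forward through the homeomorphism of Lemma~\ref{p.homeo}, necessity via the identities $\Delta_A\varphi(x) = -\Lambda(\langle x\rangle^*\setminus\langle A\rangle^*)$, $\Delta_A\Phi(x) = \Lambda(\langle x\rangle^*\setminus\langle A\rangle^*)$ together with MCT for Scott-continuity, and uniqueness via Lemma~\ref{R-rep.unique}. This is exactly how the paper assembles Theorem~\ref{R-rep} in the paragraphs surrounding Lemma~\ref{R-rep.unique}.
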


\begin{remark}
A decreasing net
$\check{\mathbb{K}}\setminus\llangle z\rrangle^*$,
$z\in \mathbb{K}^*$,
of closed sets converges to $\{\hat{0}\}$.
Thus, a Radon measure $\Lambda$ satisfies
$\Lambda(\{\hat{0}\}) = \inf_{z\in \mathbb{K}^*}
\Lambda(\check{\mathbb{K}}\setminus\llangle z\rrangle^*)$,
which equals
$\inf_{z\in \mathbb{K}^*}
\Lambda(\check{\mathbb{K}}\setminus\langle z\rangle^*)
= \inf_{z\in \mathbb{K}} \varphi(z)$
in the context of Theorem~\ref{R-rep}(i).
Consequently, $\Lambda$ is nonatomic at $\hat{0}$
if $\varphi$ is nondegenerate.
Similarly in Theorem~\ref{R-rep}(ii)
we can show that $\Lambda(\mathbb{K}') = \sup_{z\in \mathbb{K}} \Phi(z)$.
\end{remark}

\section{Valuations on a distributive lattice}
\label{val.sec}

A lattice $\mathbb{L}$ is distributive if
for any nonempty finite subset $A$ of $\mathbb{L}$ we have
\begin{equation}\label{distributive}
x \wedge \bigvee A = \bigvee (x\wedge A),
\end{equation}
where $\bigvee A$ denotes the least upper bound of $A$
and $x\vee A := \{x\vee z: z\in A\}$.
The distributivity of \eqref{distributive} is dually characterized by
$x \vee \bigwedge A = \bigwedge (x\vee A)$.
Given any finite subset $G$ of $\mathbb{L}$
the distributivity
allows us to construct the finite distributive sublattice $F$
by first generating the sup-subsemilattice $H$
by all elements of the form $\bigvee A$,
$\varnothing\neq A\subseteq G$,
then extending $H$ to the sublattice $F$
which consists of
all elements of the form $\bigwedge B$,
$\varnothing\neq B\subseteq H$.

In a distributive lattice $\mathbb{L}$ the following statements are
equivalent for $z \neq\hat{1}$
(cf. Section~I-3 of~\cite{scott}).
\begin{itemize}
\item[(a)]
$\mathbb{L}\setminus\langle z\rangle$ is a filter;
\item[(b)]
$z = x\wedge y$ implies $z = x$ or $z = y$;
\item[(c)]
$z$ is maximal in $\mathbb{L}\setminus U$ with some open filter $U$.
\end{itemize}
An element $z$ is called \emph{prime}
[or, \emph{$\wedge$-irreducible}]
if it satisfies either (a) or (c)
[respectively, if it satisfies (b)].
The top element $\hat{1}$ satisfies neither (a) nor (c)
while (b) holds for $z = \hat{1}$;
thus, there is a subtle distinction between
prime and $\wedge$-irreducible elements.
The following result is a straightforward consequence of
(c) along with the $T_0$-property of Scott topology
of a domain
(cf. Theorem I-3.7 of \cite{scott} for the proof).

\begin{proposition}\label{abundant}
Assume that $\mathbb{L}$ is a domain.
Then if $x \not\le y$
then there exists some prime element $z$ of $\mathbb{L}$
such that $x \not\le z$ and $y \le z$.
\end{proposition}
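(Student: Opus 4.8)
The plan is to extract the desired prime element from a Scott open set separating $x$ from $y$ by a maximality/Zorn's lemma argument. Since $\mathbb{L}$ is a domain, its Scott topology is $T_0$, and more precisely the separation is witnessed concretely: as noted in the excerpt, if $x\not\le y$ then the set $U = \mathbb{L}\setminus\langle y\rangle$ is Scott open. Indeed $U$ is an upper set, and if $E$ is directed with $\sup E\in U$ then $\sup E\not\le y$, so some $w\in E$ satisfies $w\not\le y$ (otherwise $y$ would be an upper bound of $E$, forcing $\sup E\le y$); hence $w\in U\cap E$. So $U$ is a Scott open filter? Not necessarily a filter, but it is at least a Scott open set containing $x$ and missing $y$. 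To land inside an \emph{open filter} I would instead invoke the interpolation/continuity structure: pick $a\ll x$ with $a\not\le y$ (possible since $x=\sup\llangle x\rrangle$ and $x\not\le y$ forces some member of $\llangle x\rrangle$ to fail $\le y$), and take $U = \llangle a\rrangle^*$, which is a Scott open filter by Assumption~\ref{k.ass} (more precisely, by the general fact that $\llangle a\rrangle^*$ is Scott open, and it is a filter when the way-below relation is multiplicative and $\hat 1\ll\hat 1$, which holds for the lattice $\mathbb{L}=\mathbb{K}$ under consideration; for a general domain one argues with $\mathrm{OFilt}$ directly as in Theorem~I-3.7 of~\cite{scott}). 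Then $x\in U$ and $y\notin U$.

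Next I would apply Zorn's lemma to the poset $\mathbb{L}\setminus U$, ordered by $\le$, to find an element $z$ that is maximal in $\mathbb{L}\setminus U$. The hypotheses to check are that $y\in\mathbb{L}\setminus U$ (so the poset is nonempty) and that every chain in $\mathbb{L}\setminus U$ has an upper bound there. For the latter: a chain $C\subseteq\mathbb{L}\setminus U$ is directed, so $\sup C$ exists in the dcpo $\mathbb{L}$; if $\sup C\in U$ then by Scott-openness $C\cap U\neq\varnothing$, a contradiction, so $\sup C\in\mathbb{L}\setminus U$ and serves as an upper bound. Hence Zorn applies and yields a maximal $z\in\mathbb{L}\setminus U$ with $y\le z$. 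Moreover $z\neq\hat 1$ since $\hat 1\in U$ (as $\hat 1\ge a$ and $U$ is an upper set). Since $z$ is maximal in $\mathbb{L}\setminus U$ with $U$ an open filter, condition (c) in the trichotomy preceding the proposition holds, so $z$ is prime. Finally $x\not\le z$: if $x\le z$ then, $z$ being an element of $\mathbb{L}\setminus U$ and hence a lower bound obstruction, we would get $x\in\langle z\rangle\subseteq\mathbb{L}\setminus U$, contradicting $x\in U$. This gives a prime $z$ with $x\not\le z$ and $y\le z$, as required.

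The main obstacle I anticipate is the very first step: ensuring that the separating Scott open set can be taken to be an \emph{open filter}, since primeness is defined via maximality in the complement of an open \emph{filter} (condition (c)), not merely an open set. In a general domain an arbitrary Scott open neighborhood of $x$ avoiding $y$ need not be filtered. The resolution is to use the continuity of $\mathbb{L}$ together with the structure in Assumption~\ref{k.ass} (multiplicativity of $\ll$ and $\hat 1\ll\hat 1$), which guarantees a supply of Scott open filters of the form $\llangle a\rrangle^*$ forming a base at each point; alternatively one cites Theorem~I-3.7 of~\cite{scott} directly, where exactly this point is handled. Everything else — the Zorn argument, the verification of primeness via (c), and the two non-comparabilities — is routine once the open filter $U$ is in hand.
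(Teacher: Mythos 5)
Your argument is correct in substance, and it is essentially the proof that the paper itself does not spell out but delegates to Theorem~I-3.7 of~\cite{scott}: separate $x$ from $y$ by a Scott open \emph{filter} $U$, use Zorn's lemma (chains in $\mathbb{L}\setminus U$ have suprema there by Scott openness) to get an element maximal in $\mathbb{L}\setminus U$ above $y$, and invoke condition~(c) for primeness; so you have reconstructed the cited argument rather than found a different route. Two small points deserve care. First, as you yourself flag, for a bare domain $\llangle a\rrangle^*$ is Scott open but need not be filtered, so your concrete choice of $U$ only works in the paper's actual setting (Assumption~\ref{k.ass}, where multiplicativity of $\ll$ gives $a=a\wedge a\ll z_1\wedge z_2$ for $z_1,z_2\in\llangle a\rrangle^*$); for the proposition as literally stated one needs the standard interpolation construction of a separating open filter, e.g.\ a chain $a\ll\cdots\ll b_2\ll b_1\ll x$ obtained by repeated interpolation, with $U=\bigcup_n\langle b_n\rangle^*$, which is Scott open, filtered, contains $x$, and misses $y$ since every member lies above $a\not\le y$ — this is exactly the content of the cited theorem. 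Second, two cosmetic slips: $\hat 1\in U$ should be justified by $x\in U$ and $U$ being an upper set (note $a$ itself need not lie in $\llangle a\rrangle^*$), and to guarantee $y\le z$ you should run Zorn on $(\mathbb{L}\setminus U)\cap\langle y\rangle^*$ (equivalently, use the form of Zorn giving a maximal element above a given one); maximality there still implies maximality in $\mathbb{L}\setminus U$, since any larger element of $\mathbb{L}\setminus U$ would again lie above $y$. With these adjustments the proof is complete and matches the intended one.
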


In the rest of our investigation
we extend Assumption~\ref{k.ass}
for the domain $\mathbb{K}$ to be distributive.
The lattice $\mathcal{K}$ of Example~\ref{class.k}
is distributive,
and moreover, the following characterization of
prime elements of $\mathcal{K}$ follows
(cf. Example~I-3.14 of~\cite{scott}).

\begin{proposition}\label{prime.q}
A compact set $Q$ is prime in $\mathcal{K}$ if and only if $Q$ is
a singleton.
\end{proposition}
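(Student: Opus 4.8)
The plan is to prove the equivalence through the $\wedge$-irreducibility characterization (b) listed just before the proposition, which in the distributive domain $\mathbb{K}=\mathcal{K}$ is equivalent to primeness for any $Q\neq\varnothing$ (the top element $\hat{1}$ of $\mathcal{K}$). The key translation is that the order on $\mathcal{K}$ is reverse inclusion, so the meet is union: $Q_1\wedge Q_2 = Q_1\cup Q_2$. Hence $Q$ is $\wedge$-irreducible precisely when $Q = Q_1\cup Q_2$ with $Q_1,Q_2\in\mathcal{K}$ forces $Q=Q_1$ or $Q=Q_2$; that is, $Q$ cannot be written as a union of two proper compact subsets.

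For the ``if'' direction, I would take $Q=\{a\}$. Since $\{a\}\neq\varnothing$ it suffices to verify (b): if $\{a\}=Q_1\cup Q_2$ then $a$ lies in one of the two, say $a\in Q_1$, so $\{a\}\subseteq Q_1\subseteq\{a\}$ and $Q_1=Q$. Thus every singleton is $\wedge$-irreducible, hence prime.

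For the ``only if'' direction, suppose $Q$ is prime, so $Q\neq\varnothing$, and assume for contradiction that $|Q|\ge 2$; pick distinct $a,b\in Q$. Using the Hausdorff property of $R$, choose disjoint open sets $U\ni a$ and $V\ni b$. Set $Q_1 = Q\setminus U$ and $Q_2 = Q\setminus V$; each is a closed subset of the compact set $Q$, hence compact, so $Q_1,Q_2\in\mathcal{K}$. Since $U\cap V=\varnothing$ we get $Q_1\cup Q_2 = Q\setminus(U\cap V) = Q$, while $a\notin Q_1$ and $b\notin Q_2$ give $Q_1\neq Q\neq Q_2$. This contradicts the $\wedge$-irreducibility of $Q$, so $|Q|\le 1$, and therefore $Q$ is a singleton.

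I do not expect a genuine obstacle here. The only two points that deserve a careful word are: first, correctly passing between ``prime'' and the concrete set-theoretic statement about unions, via the reverse-inclusion order and the equivalence of (a)--(c) with $\wedge$-irreducibility for elements $\neq\hat{1}$ in a distributive lattice; and second, the use of Hausdorffness of $R$ to split $Q = (Q\setminus U)\cup(Q\setminus V)$ into two proper compact pieces. Everything else is immediate.
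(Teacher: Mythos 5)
Your proof is correct, but it takes a genuinely different route from the paper's for the nontrivial direction. You reduce primeness to the $\wedge$-irreducibility condition (b) --- legitimate here, since the paper has just asserted that $\mathcal{K}$ is distributive and that (a)--(c) are equivalent to (b) for elements other than the top $\hat{1}=\varnothing$ --- translate the meet as union under reverse inclusion, and then, given two distinct points $a,b\in Q$, exhibit the decomposition $Q=(Q\setminus U)\cup(Q\setminus V)$ into two proper compact subsets via Hausdorff separation. The paper instead works directly with characterization (a): primeness makes $\mathcal{U}=\{E\in\mathcal{K}:Q\setminus E\neq\varnothing\}$ a filter, the associated filter base $\{Q\setminus E:E\in\mathcal{U}\}$ on the compact space $Q$ has a cluster point $a$, and every compact neighborhood $E$ of $a$ must then contain $Q$, which by local compactness forces $Q=\{a\}$. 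Your argument is more elementary: it needs only the Hausdorff separation of two points and the fact that a closed subset of a compact set is compact, with no cluster-point machinery and no use of local compactness at this step; note that for the hard implication you only use the easy half of the equivalence (prime implies $\wedge$-irreducible, valid in any lattice), while the distributivity-dependent half (irreducible implies prime) enters only in showing a singleton is prime, where a direct check that $\{E\in\mathcal{K}:a\notin E\}$ is a filter would serve equally well. The paper's filter/cluster-point proof, by contrast, stays entirely within the domain-theoretic definition (a) and does not appeal to the distributive-lattice equivalence, in the spirit of Example I-3.14 of the cited reference.
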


\begin{proof}
A singleton is obviously prime.
Conversely, suppose $Q$ is prime in $\mathcal{K}$.
Because $\mathcal{U} = \{E\in\mathcal{K}: Q\setminus E\neq\varnothing\}$
is a filter in $\mathcal{K}$,
the corresponding filter base
$\mathcal{B} = \{Q\setminus E: E\in\mathcal{U}\}$
on the compact Hausdorff space $Q$
has a cluster point, say $a \in Q$.
If $E$ is a compact neighborhood of $a$ in the LCH space
$R$ then $Q\subseteq E$; otherwise, $Q\setminus E\in\mathcal{B}$,
and the set of cluster points must be contained in
$Q\setminus \mathrm{int}(E)$, which contradicts
$a\not\in Q\setminus \mathrm{int}(E)$.
Thus, we conclude that $Q = \{a\}$.
\end{proof}

By $P$
we denote the collection of prime elements in $\mathbb{K}$.
The continuous lattice
$\check{\mathbb{K}}$ is also multiplicative and distributive,
and $\check{P} := P\cup\{\hat{0}\}$
is the corresponding collection of prime elements in $\check{\mathbb{K}}$.
We can observe that
$$
\mathcal{P} := \mathcal{Q}\cap\mathcal{F}
= \left\{\check{\mathbb{K}}\setminus\langle z\rangle:
z\in \check{P}\right\}
$$
is a compact subset of $\mathcal{S}$,
and isomorphic to $\check{P}$.
For any positive integer $k$
we can introduce a continuous map $\Pi_k$
from $(V_1,\ldots,V_k)\in\mathcal{P}^k$ to
\begin{math}
  \Pi_k(V_1,\ldots,V_k) := \bigcap_{i=1}^k V_i
  \in\mathcal{F}.
\end{math}
Since the product space $\mathcal{P}^k$ is compact,
so is the image
\begin{equation*}
  \Pi_k\left(\mathcal{P}^k\right)
  = \left\{\check{\mathbb{K}}\setminus\langle A\rangle:
  A\subseteq \check{P}, 1 \le |A| \le k
  \right\},
\end{equation*}
which we denote by $\mathcal{P}_k$.
It should be noted that
\begin{math}
  \mathbb{K} = \check{\mathbb{K}}\setminus\langle\hat{0}\rangle
  \in\mathcal{P}_k .
\end{math}

Recall the notation of~(\ref{F.x});
for any $w\in \mathbb{K}$ and any finite subset $B$ of $\mathbb{K}$,
we will write
\begin{equation}\label{f.b}
\mathcal{F}_w^B :=
\{V\in\mathcal{F}: w\in V,\, V\cap B = \varnothing\},
\end{equation}
in which $\mathcal{F}_x^B = \mathcal{F}_x$ if $B = \varnothing$.
Then we can observe the following property.

\begin{lemma}\label{p.k.lemma}
If a finite subset $B$ of $\mathbb{K}$ satisfies $|B| \ge k+1$ then
$\mathcal{P}_k\cap\mathcal{F}_w^{B}=\varnothing$
whenever
$w \le {\bigwedge_{\{x,y\}\subseteq B} x\vee y}$.
\end{lemma}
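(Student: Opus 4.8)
The plan is a short proof by contradiction built on a pigeonhole argument. The starting observation is that every $V\in\mathcal{P}_k$ is cut out by at most $k$ prime elements: by definition $V=\check{\mathbb{K}}\setminus\langle A\rangle$ for some $A\subseteq\check{P}$ with $1\le|A|\le k$, and since a down-set is unaffected by repetitions I may list $A$ (with possible repeats) as $z_1,\ldots,z_k\in\check{P}$, so that
\begin{equation*}
  V=\bigcap_{i=1}^{k}\left(\check{\mathbb{K}}\setminus\langle z_i\rangle\right)
   =\{z\in\check{\mathbb{K}}:z\not\le z_i\text{ for every }i=1,\ldots,k\}.
\end{equation*}
In this description, $w\in V$ means $w\not\le z_i$ for all $i$, while $V\cap B=\varnothing$ means that each $b\in B$ violates this, i.e.\ $b\le z_{i(b)}$ for some index $i(b)\in\{1,\ldots,k\}$.

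Assuming for contradiction that $V\in\mathcal{P}_k\cap\mathcal{F}_w^{B}$ with $|B|\ge k+1$ and $w\le\bigwedge_{\{x,y\}\subseteq B}x\vee y$, I would apply the pigeonhole principle to the assignment $b\mapsto i(b)$: since $|B|\ge k+1>k$, two distinct elements $b_1,b_2\in B$ must satisfy $i(b_1)=i(b_2)=:j$. Then $z_j$ is an upper bound of $\{b_1,b_2\}$, so $b_1\vee b_2\le z_j$, and hence
\begin{equation*}
  w\le\bigwedge_{\{x,y\}\subseteq B}x\vee y\le b_1\vee b_2\le z_j,
\end{equation*}
which forces $w\notin V$ and contradicts $w\in V$. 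This establishes the claim.

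I do not expect a genuine obstacle here; the entire content is the pigeonhole count together with the defining property of $\bigwedge_{\{x,y\}\subseteq B}x\vee y$ as a lower bound for each pairwise join. The only bookkeeping points to keep in mind are: padding $A$ up to a $k$-tuple so that the count reads cleanly as ``$k$ boxes versus at least $k+1$ balls'', and observing that in any relation $b\le z_i$ with $b\in\mathbb{K}$ the element $z_i$ cannot be $\hat{0}$, so that comparing the lattice join $b_1\vee b_2\in\mathbb{K}$ with $z_j$ is legitimate.
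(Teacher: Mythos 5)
Your proof is correct and follows essentially the same route as the paper's: assume some $V=\check{\mathbb{K}}\setminus\langle A\rangle\in\mathcal{P}_k\cap\mathcal{F}_w^{B}$, note $B\subseteq\langle A\rangle$ with $|A|\le k<|B|$, apply pigeonhole to find two elements of $B$ below a common $z\in A$ (in your notation $z_j$), and conclude $w\le x\vee y\le z$, contradicting $w\in V$. The extra bookkeeping you mention (padding $A$ to a $k$-tuple, excluding $\hat{0}$) is harmless and does not change the argument.
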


\begin{proof}
Suppose that
there is some $A\subseteq \check{P}$
such that $1 \le |A| \le k$ and
$\check{\mathbb{K}}\setminus\langle A\rangle\in\mathcal{F}_w^{B}$.
Since $B\subseteq\langle A\rangle$ and $|A| < |B|$,
there exists a pair $\{x,y\}\subseteq B$
such that $x,y\le z$ for some $z\in A$.
But it implies that $w \le x\vee y \le z$, and therefore,
that
$\check{\mathbb{K}}\setminus\langle A\rangle\not\in\mathcal{F}_w^{B}$,
which is a contradiction.
\end{proof}

\begin{remark}\label{p.k.rem}
Recall that in the setting of Theorem~\ref{hofmann.mislove}
the lattice $\mathcal{F}$ of closed sets
is homeomorphic to $\mathrm{OFilt}(\mathcal{K})$,
and that
the element $\varnothing$ of $\mathcal{F}$
corresponds to the top element $\mathcal{K}$ of
$\mathrm{OFilt}(\mathcal{K})$.
In the context of Section~\ref{val.intro}
the collection
$\{A\in\mathcal{F}: 0\le |A|\le k\}$
of finite subsets of at most size~$k$ in $R$
is isomorphic to the compact subset
$\mathcal{P}_k$ of $\mathrm{OFilt}(\mathcal{K})$,
and it will be referred to by the same symbol $\mathcal{P}_k$.
In particular, $\mathcal{P}$
($=\mathcal{P}_1$) consists of all the singletons and the empty set
$\varnothing$, and it is isomorphic to the one-point
compactification of $R$.
For any finite subset $B = \{Q_1,\ldots,Q_n\}$ of $\mathcal{K}$
and $W\in\mathcal{K}$ satisfying
$W \supseteq \bigcup_{1\le i<j\le n}Q_i\cap Q_j$,
we can express (\ref{f.b}) by
\begin{equation*}
  \mathcal{F}_W^B
  = \{F\in\mathcal{F}: F\cap W = \varnothing,
  F\cap Q_i \neq\varnothing
  \mbox{ for all $i=1,\ldots,n$}\} .
\end{equation*}
Thus, if $F\in\mathcal{F}_W^B$
then $F$ must contain
at least one point for each disjoint
sequence $Q_1\setminus W,\ldots,Q_n\setminus W$,
therefore
$F\not\in\mathcal{P}_k$ for $k < n$.
This validates Lemma~\ref{p.k.lemma}.
\end{remark}

\begin{definition}\label{val.def}
For any positive integer $k$,
a completely monotone Scott function $\varphi$ on $\mathbb{K}$
is called a \emph{$k$-valuation} if
\begin{equation}\label{k.val}
\nabla_B\varphi\left(
\bigwedge_{\{x,y\}\subseteq B} x\vee y
\right) = 0
\end{equation}
holds for any ${(k+1)}$-element subset $B$ of $\mathbb{K}$.
Similarly, a completely alternating conjugate $\Phi$ is called
a \emph{conjugate $k$-valuation}
if (\ref{k.val}) holds
for any ${(k+1)}$-element subset $B$ of $\mathbb{K}$.
\end{definition}

\begin{remark}\label{incomp.rem}
We note in (\ref{k.val}) that the greatest lower bound
\begin{equation*}
  o_B = \bigwedge_{\{x,y\}\subseteq B} x\vee y
\end{equation*}
is considered for all the joins $(x\vee y)$'s of
distinct pair $\{x,y\}$ from $B$.
If $B$ contains some comparable pair,
say $x_1<y_1$, then we have $o_B\le y_1$
and by setting $B_1=B\setminus\{y_1\}$
we can find
\begin{math}
  \nabla_B\varphi(o_B)
  = \nabla_{B_1}\varphi(o_B) - \nabla_{B_1}\varphi(o_B\wedge y_1)
  = 0 .
\end{math}
Thus, it suffices to check
\eqref{k.val} only for antichains $B$'s
[i.e., $B$'s consisting of pairwise incomparable elements].
\end{remark}

\begin{proposition}\label{valuation}
Let $\phi$ be an increasing or a decreasing function on $\mathbb{K}$.
The 1-valuation condition
\begin{equation}\label{module}
\phi(x) + \phi(y) = \phi(x\wedge y) + \phi(x\vee y),
\quad x,y\in \mathbb{K},
\end{equation}
is equivalent to:
\begin{itemize}
\item[(i)]
$\phi$ is completely monotone and completely $\vee$-alternating
if it is increasing; or
\item[(ii)]
$\phi$ is completely alternating and completely $\vee$-monotone
if it is decreasing.
\end{itemize}
\end{proposition}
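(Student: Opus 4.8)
The plan is to establish (i) in full and then deduce (ii) by duality: replacing $\phi$ by $-\phi$ sends increasing functions to decreasing ones, leaves \eqref{module} unchanged, and interchanges ``completely monotone'' with ``completely alternating'' and ``completely $\vee$-alternating'' with ``completely $\vee$-monotone'', since $\nabla_A(-\phi)=-\nabla_A\phi$ and $\Delta_A(-\phi)=-\Delta_A\phi$. So it suffices to treat the increasing case.

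The crux of the ``only if'' direction is a reduction identity valid for any function $\phi$ on the distributive lattice $\mathbb{K}$ that satisfies \eqref{module}: for every nonempty finite $A=\{x_1,\ldots,x_n\}$,
\[
\nabla_A\phi=\nabla_{\bigvee A}\phi \qquad\text{and, dually,}\qquad \Delta_A\phi=\Delta_{\bigwedge A}\phi .
\]
I would prove this by induction on $n$, the case $n=1$ being trivial. For $n=2$, expand $\nabla_{x_1,x_2}\phi(x)=\phi(x)-\phi(x\wedge x_1)-\phi(x\wedge x_2)+\phi(x\wedge x_1\wedge x_2)$ using \eqref{nabla.phi}; by \eqref{distributive} the elements $u=x\wedge x_1$ and $v=x\wedge x_2$ satisfy $u\vee v=x\wedge(x_1\vee x_2)$ and $u\wedge v=x\wedge x_1\wedge x_2$, so applying \eqref{module} to the pair $\{u,v\}$ collapses the last three terms to $-\phi\bigl(x\wedge(x_1\vee x_2)\bigr)$, leaving $\nabla_{x_1,x_2}\phi(x)=\phi(x)-\phi\bigl(x\wedge(x_1\vee x_2)\bigr)=\nabla_{x_1\vee x_2}\phi(x)$. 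The inductive step is then $\nabla_{x_1,\ldots,x_n}\phi=\nabla_{x_n}\bigl(\nabla_{x_1\vee\cdots\vee x_{n-1}}\phi\bigr)=\nabla_{(x_1\vee\cdots\vee x_{n-1})\vee x_n}\phi$, the last equality by the case $n=2$. The identity for $\Delta_A$ is the mirror computation, using the dual distributive law $x\vee\bigwedge A=\bigwedge(x\vee A)$ and the analogue of \eqref{nabla.phi} for $\Delta_A$. Granting the lemma, if $\phi$ is increasing then $\nabla_A\phi(x)=\phi(x)-\phi(x\wedge\bigvee A)\ge0$ and $\Delta_A\phi(x)=\phi(x)-\phi(x\vee\bigwedge A)\le0$ for every $A$, i.e.\ $\phi$ is completely monotone and completely $\vee$-alternating.

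For the ``if'' direction, assume $\phi$ is increasing, completely monotone, and completely $\vee$-alternating. From \eqref{nabla.phi} and its $\Delta$-analogue one reads off that $\nabla_{x_1,x_2}\phi(x_1\vee x_2)$ and $\Delta_{x_1,x_2}\phi(x_1\wedge x_2)$ coincide, both being equal to $\phi(x_1\vee x_2)+\phi(x_1\wedge x_2)-\phi(x_1)-\phi(x_2)$. Complete monotonicity forces $\nabla_{x_1,x_2}\phi\ge0$ and hence this common value to be nonnegative, while complete $\vee$-alternation forces $\Delta_{x_1,x_2}\phi\le0$ and hence it to be nonpositive; therefore it vanishes for all $x_1,x_2$, which is precisely \eqref{module}.

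I expect the only substantive point to be the $n=2$ case of the reduction identity --- the single place where distributivity of $\mathbb{K}$ enters; the induction, the sign bookkeeping in the ``if'' direction, and the $(-\phi)$-reduction of (ii) to (i) are all routine.
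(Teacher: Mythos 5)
Your argument is correct and is essentially the paper's own proof: the reduction identity $\nabla_A\phi=\nabla_{\bigvee A}\phi$ and $\Delta_A\phi=\Delta_{\bigwedge A}\phi$ is exactly the inductive step the paper invokes for the converse, and the forward direction uses the same observation that $\nabla_{x,y}\phi(x\vee y)=\Delta_{x,y}\phi(x\wedge y)$ equals the modularity deficit, hence vanishes under the two sign conditions. Your $-\phi$ duality trick merely packages what the paper dismisses with ``similarly,'' so the two proofs coincide in substance.
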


\begin{proof}
We can observe that
\begin{equation*}
\phi(x\wedge y) + \phi(x\vee y)
- \phi(x) - \phi(y)
= \nabla_{x,y}\phi(x\vee y)
= \Delta_{x,y}\phi(x\wedge y) .
\end{equation*}
Then (i) implies that
$\nabla_{x,y}\phi(x\vee y) \ge 0$
and $\Delta_{x,y}\phi(x\wedge y) \le 0$,
and therefore, that $\phi$ satisfies (\ref{module}).
Similarly (ii) implies (\ref{module}).
Conversely, suppose that (\ref{module}) holds.
Then for any nonempty finite subset $A$ of $\mathbb{K}$
we can deduce
$\nabla_A\phi(x)
= \phi(x) - \phi(x \wedge \bigvee A)$
and
$\Delta_A\phi(x)
= \phi(x) - \phi(x \vee \bigwedge A)$
by induction,
which implies either (i) or (ii).
\end{proof}

When $\varphi$ is a $1$-valuation in Definition~\ref{val.def},
we simply call it \emph{valuation} (or \emph{module}).
By Proposition~\ref{valuation}
Scott function $\varphi$ or conjugate Scott function $\Phi$ is a
valuation or a conjugate valuation respectively
if (\ref{module}) holds for $\varphi$ or $\Phi$;
there is no need to check complete monotonicity or completely
alternating property for $1$-valuation.

\begin{proposition}\label{extend.val}
A $k$-valuation $\varphi$
is also $k'$-valuation for every $k' > k$.
\end{proposition}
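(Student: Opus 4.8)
The plan is to establish only the single step $k' = k+1$; Proposition~\ref{extend.val} for arbitrary $k' > k$ then follows by an immediate induction on $k'$. So suppose $\varphi$ is a $k$-valuation and let $B$ be an arbitrary $(k+2)$-element subset of $\mathbb{K}$. With the notation $o_B = \bigwedge_{\{x,y\}\subseteq B} x\vee y$ of Remark~\ref{incomp.rem}, the goal is to verify $\nabla_B\varphi(o_B) = 0$.

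The one fact I would isolate first is that a successive difference $\nabla_A\varphi$ of a completely monotone function $\varphi$ is again completely monotone, and in particular nonnegative and increasing. For the first assertion, $\nabla_{A'}(\nabla_A\varphi) = \nabla_{A\cup A'}\varphi \ge 0$ for every nonempty finite $A'$ — here one uses that $\nabla_{z,z} = \nabla_z$, so repeated indices are harmless. For monotonicity, any $\psi$ with $\nabla_{\{w\}}\psi \ge 0$ for each $w$ is increasing, since $a \le b$ gives $0 \le \nabla_a\psi(b) = \psi(b) - \psi(b\wedge a) = \psi(b) - \psi(a)$. Consequently, whenever $\nabla_A\varphi(c) = 0$ for some $c$, the nonnegative increasing function $\nabla_A\varphi$ is squeezed between $0$ and $0$ on the principal ideal $\langle c\rangle$ and hence vanishes there.

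Now pick any $z\in B$ and set $B_1 = B\setminus\{z\}$, so that $|B_1| = k+1$ and, by the order-independence of the difference operator, $\nabla_B = \nabla_z\circ\nabla_{B_1}$. Since $\varphi$ is a $k$-valuation, $\nabla_{B_1}\varphi(o_{B_1}) = 0$, where $o_{B_1} = \bigwedge_{\{x,y\}\subseteq B_1} x\vee y$. The meet defining $o_{B_1}$ ranges over a subfamily of the pairs defining $o_B$, so $o_B \le o_{B_1}$, and a fortiori $o_B\wedge z \le o_{B_1}$. By the previous paragraph $\nabla_{B_1}\varphi$ vanishes at both $o_B$ and $o_B\wedge z$, whence
\[
\nabla_B\varphi(o_B) = \nabla_{B_1}\varphi(o_B) - \nabla_{B_1}\varphi(o_B\wedge z) = 0 .
\]
This shows $\varphi$ is a $(k+1)$-valuation, and the induction on $k'$ completes the proof (the conjugate case being entirely analogous, with $-\nabla_A\Phi$ playing the role of a nonnegative increasing function). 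I do not foresee any genuine obstacle: the only point requiring a moment's care is the monotonicity of successive differences, which is routine once one recalls that repeated indices in the $\nabla$-operator collapse.
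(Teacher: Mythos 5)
Your argument is correct, but it is genuinely different from the paper's. The paper fixes an arbitrary $(k'+1)$-element antichain $B'$ (using Remark~\ref{incomp.rem}), generates the finite distributive sublattice $F$ from it, and works with the M\"{o}bius inverse: via the measure formula (\ref{nabla.r}) it squeezes $0 \le \nabla_{B'}\varphi(z') = m(\langle z'\rangle\setminus\langle B'\rangle) \le m(\langle z\rangle\setminus\langle B\rangle) = \nabla_{B}\varphi(z) = 0$ for a $(k+1)$-element subset $B\subseteq B'$, handling all $k' > k$ in one stroke. You instead prove only the step $k\to k+1$ and induct, and your key tool is purely algebraic: $\nabla_{A'}(\nabla_A\varphi)=\nabla_{A\cup A'}\varphi$, so successive differences of a completely monotone function are nonnegative and increasing, hence vanish on the whole principal ideal below any zero; writing $\nabla_B=\nabla_z\circ\nabla_{B_1}$ with $|B_1|=k+1$ and using $o_B\le o_{B_1}$ then kills both terms. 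Your route avoids M\"{o}bius inversion entirely and does not need the distributivity assumed in Section~\ref{val.sec} (only the lattice structure of Assumption~\ref{k.ass}, so it is slightly more general), at the cost of an induction; the paper's route is uniform in $k'$ and reuses machinery (the measure $m$ and formula (\ref{nabla.r})) that is needed anyway for Lemma~\ref{v.approx} and Corollary~\ref{p.k.bound}. One small remark: your parenthetical about the conjugate case is fine but not needed for the statement as given, and the monotonicity lemma you prove is exactly the squeezing device that replaces the paper's measure monotonicity $m(\langle z'\rangle\setminus\langle B'\rangle)\le m(\langle z\rangle\setminus\langle B\rangle)$.
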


\begin{proof}
Let $B'$ be an arbitrary $(k'+1)$-element antichain,
and let $F$ be a finite sublattice generated by $B'$.
Since $\varphi$ is completely monotone,
we can construct a measure
$m(A) = \sum_{x\in A} r(x)$, $A\subseteq F$,
with the M\"{o}bius inverse $r$ of
the restriction of $\varphi$ to $F$.
Choose any $(k+1)$-element subset $B$ of $B'$,
and observe that
$z = \bigwedge_{\{x,y\}\subseteq B} x\vee y
\ge z' = \bigwedge_{\{x,y\}\subseteq B'} x\vee y$.
By applying (\ref{nabla.r}), we obtain
\begin{equation*}
0 \le \nabla_{B'}\varphi(z')
= m(\langle z'\rangle\setminus\langle B'\rangle)
\le m(\langle z\rangle\setminus\langle B\rangle)
= \nabla_{B}\varphi(z) = 0,
\end{equation*}
where $\langle z\rangle$ and $\langle B\rangle$
are viewed as the lower subsets of $F$ generated by $z$ and $B$.
Thus, $\varphi$ is a $k'$-valuation.
\end{proof}


Suppose that $\varphi$ is represented by $\mu$ over $\mathcal{P}_k$.
Recalling (\ref{nabla.varphi}) from Section~\ref{sec.cm},
we find (\ref{k.val}) by Lemma~\ref{p.k.lemma},
which leads to the following version of Choquet theorem.

\begin{theorem}\label{val.rep}
There exists a unique representation
$\mu$ of $\varphi$ over $\mathcal{P}_k$
if and only if
$\varphi$ is a $k$-valuation.
\end{theorem}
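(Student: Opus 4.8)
The plan is to prove the two directions separately, relying on the machinery already developed. For the necessity direction, suppose $\varphi$ has a representation $\mu$ over $\mathcal{P}_k$. Then $\varphi$ is in particular a completely monotone Scott function by Theorem~\ref{m.cm}, and the successive differences are given by $\nabla_B\varphi(w) = \mu(\mathcal{F}_w^B)$ as in~(\ref{nabla.varphi}). Take any $(k+1)$-element subset $B$ of $\mathbb{K}$ and set $o_B = \bigwedge_{\{x,y\}\subseteq B} x\vee y$. Since $\mu$ is supported by $\mathcal{P}_k$, we have $\mu(\mathcal{F}_{o_B}^B) = \mu(\mathcal{P}_k\cap\mathcal{F}_{o_B}^B)$, and Lemma~\ref{p.k.lemma} (applied with $w = o_B \le o_B$, which holds trivially) gives $\mathcal{P}_k\cap\mathcal{F}_{o_B}^B = \varnothing$. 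Hence $\nabla_B\varphi(o_B) = 0$, so $\varphi$ is a $k$-valuation.

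For the sufficiency direction, suppose $\varphi$ is a $k$-valuation. Since $\mathcal{P}_k$ is a compact (hence closed) subset of $\mathcal{S}$ containing the top element $\mathbb{K}$, by Theorem~\ref{s.approx} it suffices to exhibit an approximating net $\{\mu_F\}$ of $M_1^+(\mathcal{S})$ supported by $\mathcal{P}_k$ with $\mu_F(\mathcal{S}_x) = \varphi(x)$ for $x\in F$. Mimic the existence proof of Theorem~\ref{m.cm}: given a finite subset $G$ of $\mathbb{K}$, use distributivity of $\mathbb{K}$ to generate the finite distributive sublattice $F$ as described at the start of Section~\ref{val.sec}, and let $r$ be the M\"obius inverse of $\varphi$ restricted to $F$. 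The construction in Theorem~\ref{m.cm} produces a discrete measure on $\mathcal{F}$ assigning mass $r(z)$ to a chosen open filter $V_z$ with $F\cap V_z = F\cap\langle z\rangle^*$. The key new point is to arrange that the support lands in $\mathcal{P}_k$, i.e.\ that $r(z) = 0$ unless the corresponding point of $\mathrm{OFilt}(\mathcal{K})$ (equivalently, the element $\check{\mathbb{K}}\setminus\langle z\rangle$) lies in $\mathcal{P}_k$; in the closed-set picture of Remark~\ref{p.k.rem} this says $r$ is supported on finite sets of size at most $k$.

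The main obstacle — and the heart of the argument — is to show that the $k$-valuation identity (\ref{k.val}) forces the M\"obius inverse $r$ on the finite distributive lattice $F$ to vanish on ``large'' elements. Here I would work in the dual picture: by Proposition~\ref{abundant} the finite distributive lattice $F$ is determined by its poset of prime (meet-irreducible) elements, and every element of $F$ is a finite meet $\bigwedge A$ of primes. Writing $\nabla_B\varphi = m(\langle\cdot\rangle\setminus\langle B\rangle)$ as in~(\ref{nabla.r}) with $m(E)=\sum_{x\in E}r(x)$, I would argue that if $z\in F$ is the meet of a minimal set $A$ of at least $k+1$ primes, then choosing $B\subseteq F$ to consist of $k+1$ elements whose pairwise joins have meet equal to $z$ (obtainable from the prime decomposition) and applying~(\ref{k.val}) together with~(\ref{nabla.r}) yields $m(\langle z\rangle\setminus\langle B\rangle)=0$; isolating the single element $z$ in the set difference (via the minimality of $A$) then gives $r(z)=0$. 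This is essentially the reverse of the computation in Proposition~\ref{extend.val}, run to conclude vanishing rather than monotonicity, and a careful induction on the ``prime-height'' of elements of $F$ should close it. Once $r$ is supported on elements corresponding to $\mathcal{P}_k$, the approximating measures $\mu_F$ are supported by $\mathcal{P}_k$, Theorem~\ref{s.approx} delivers the representation $\mu$, and uniqueness follows from Lemma~\ref{mu.unique} exactly as in Theorem~\ref{m.cm}.
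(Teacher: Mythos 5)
Your necessity argument is exactly the paper's: representation over $\mathcal{P}_k$ plus (\ref{nabla.varphi}) and Lemma~\ref{p.k.lemma} give (\ref{k.val}). The overall plan for sufficiency (finite sublattices, M\"obius inversion, approximation over $\mathcal{P}_k$, then Theorem~\ref{s.approx} and uniqueness via Lemma~\ref{mu.unique}/Theorem~\ref{m.cm}) is also the paper's. But there is a genuine gap in how you place the atoms of the approximating measures inside $\mathcal{P}_k$. You identify the ``key new point'' as showing that the $k$-valuation identity forces the M\"obius inverse $r$ to vanish on large elements of $F$; that statement is in fact true and follows cheaply (no induction needed): $r\ge 0$ by complete monotonicity, and for any $z$ with $|B_F^z|\ge k+1$ one picks a $(k+1)$-element antichain $B\subseteq B_F^z$, notes $z\le\bigwedge_{\{x,y\}\subseteq B}x\vee y$ and $z\notin\langle B\rangle$, so $0\le r(z)\le m(\langle o_B\rangle_F\setminus\langle B\rangle)=\nabla_B\varphi(o_B)=0$ (only the inequality $z\le o_B$ is available, not the equality you assert, but that is all one needs). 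What the vanishing of $r$ does \emph{not} give you is a measure supported by $\mathcal{P}_k$: the filters $V_z$ chosen as in the existence proof of Theorem~\ref{m.cm} are arbitrary open filters with $F\cap V_z=F\cap\langle z\rangle^*$ and in general lie nowhere near $\mathcal{P}_k$, while your alternative identification of the atom at $z$ with $\check{\mathbb{K}}\setminus\langle z\rangle$ only makes sense when $z$ is prime in $\mathbb{K}$ (otherwise this set is not even an open filter, and it lies in $\mathcal{P}_k$ only for prime $z$). Elements $z\in F$ with $|B_F^z|\le k$ are typically not prime in $\mathbb{K}$, so with your construction the surviving mass still need not land in $\mathcal{P}_k$.

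The missing step is precisely the content of the paper's Lemma~\ref{v.approx} and the construction preceding its proof: for each $\wedge$-irreducible $q\in J_F'$ one uses Proposition~\ref{abundant} (this is where distributivity and primeness in $\mathbb{K}$, not just in $F$, enter) to choose a prime $z(q)\in P$ with $q\le z(q)$ and $\bar{q}\not\le z(q)$, where $\bar{q}=\bigwedge(F\setminus\langle q\rangle_F)$; then one assigns the mass $r(x)$ to the filter $\check{\mathbb{K}}\setminus\langle A_x\rangle$ with $A_x=\{z(q):q\in B_F^x\}$ (and $A_{\hat{0}_F}=\{\hat{0}\}$). One must then verify two things your sketch does not address: (a) these specific filters still satisfy $\mu_F(\mathcal{F}_x)=\varphi(x)$ for $x\in F$, i.e.\ $x\notin\langle A_y\rangle$ exactly when $y\le x$, which uses the coprimes $\bar{q}$; and (b) each atom lies in $\mathcal{P}_{|B_F^x|}$, so that once $r$ vanishes (or, as in the paper's bound (\ref{v.bound}), once the total mass of the atoms with $|B_F^x|\ge k+1$ is controlled by $\sum\nabla_B\varphi(o_B)=0$) the net $\{\mu_F\}$ approximates $\varphi$ over $\mathcal{P}_k$. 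Without this prime-selection construction the sufficiency direction does not go through, so as written the proposal proves only the necessity half.
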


Lemma~\ref{p.k.lemma} implies the necessity of $k$-valuation
in Theorem~\ref{val.rep}.
In the next two auxiliary lemmas
we prepare the proof of sufficiency.

For any finite sublattice $F$ of $\mathbb{K}$,
we can introduce the collections
$J_F$ and $J'_F$ respectively
of $\wedge$-irreducible elements
and of prime elements in $F$.
We denote the bottom and the top element of $F$
by $\hat{0}_F := \bigwedge F$ and $\hat{1}_F := \bigvee F$,
respectively.
Here we have $J'_F = J_F\setminus\{\hat{1}_F\}$,
and view $J_F$ as a subposet of $F$.
According to the fundamental theorem for finite distributive
lattices (e.g., Theorem~3.4.1 of~\cite{stanley}),
the distributive lattice $F$
is poset-isomorphic to the lattice of nonempty upper
subsets of $J_F$,
mapping from $x\in F$ to $J_F\cap\langle x\rangle^*$.

\begin{lemma}\label{partition}
Let $F$ be a finite sublattice of $\mathbb{K}$,
and let $B_F^x$ be the collection of maximal elements of
$F\setminus\langle x\rangle^*$ where $x \neq \hat{0}_F$;
set $B_F^{\hat{0}_F} = \varnothing$.
Then the collection
$\{\mathcal{F}_x^{B_F^x}\}_{x\in F}$
partitions $\mathcal{F}$.
\end{lemma}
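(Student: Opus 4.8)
The plan is to show that each Scott open filter $V\in\mathcal{F}=\mathrm{OFilt}(\mathbb{K})$ lies in exactly one of the sets $\mathcal{F}_x^{B_F^x}$, by explicitly identifying the relevant $x$. First I would recall that any $V\in\mathcal{F}$ is a nonempty upper set closed under finite meets, hence contains $\hat{1}$; since the finite sublattices of interest contain $\hat{1}$ (as in the existence proof of Theorem~\ref{m.cm}), $F\cap V$ is a nonempty finite meet-subsemilattice of $F$, and so it has a least element $x_V:=\bigwedge(F\cap V)\in F\cap V$. The candidate block for $V$ will be the one indexed by $x=x_V$.

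The heart of the argument is the chain of equivalences, for a fixed $x\in F$,
\begin{equation*}
  V\cap B_F^x=\varnothing
  \iff
  V\cap\bigl(F\setminus\langle x\rangle^*\bigr)=\varnothing
  \iff
  F\cap V\subseteq\langle x\rangle^*.
\end{equation*}
The inclusion $B_F^x\subseteq F\setminus\langle x\rangle^*$ gives one direction at once; for the other I would argue that if some $z\in F\cap V$ satisfies $z\not\ge x$, then a maximal element $z'$ of $\{w\in F:w\ge z,\ w\not\ge x\}$ is in fact maximal in all of $F\setminus\langle x\rangle^*$ --- here finiteness of $F$ is used --- so $z'\in B_F^x$, while $z'\ge z$ and the fact that $V$ is an upper set force $z'\in V$, a contradiction. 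Showing that $V$ meeting $F\setminus\langle x\rangle^*$ forces $V$ to meet $B_F^x$ is the step I expect to require the most care. Granting this, the condition $V\in\mathcal{F}_x^{B_F^x}$, i.e.\ $x\in V$ together with $V\cap B_F^x=\varnothing$, is equivalent to $x$ being simultaneously an element and a lower bound of $F\cap V$, i.e.\ to $x$ being the minimum of $F\cap V$.

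It then follows immediately that $V\in\mathcal{F}_x^{B_F^x}$ holds for exactly one $x\in F$, namely $x=x_V$: existence because $\bigwedge(F\cap V)$ lies in $F\cap V$ by meet-closure and finiteness, and uniqueness because a minimum is unique. This is the claimed partition of $\mathcal{F}$. Finally I would note that the degenerate index is consistent with the stated convention: $F\setminus\langle\hat{0}_F\rangle^*=\varnothing$, so its set of maximal elements is empty and $B_F^{\hat{0}_F}=\varnothing$, and the corresponding block $\mathcal{F}_{\hat{0}_F}=\mathcal{F}_{\hat{0}_F}^{\varnothing}$ consists precisely of those $V$ with $\hat{0}_F\in V$, equivalently $F\subseteq V$, equivalently $x_V=\hat{0}_F$.
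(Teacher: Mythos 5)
Your proof is correct and follows essentially the same route as the paper: both reduce the lemma to the characterization that $V\in\mathcal{F}_x^{B_F^x}$ holds exactly when $x$ is the least element of $F$ lying in $V$, using that $V\in\mathcal{F}$ is an upper set closed under finite meets, so that existence and uniqueness of this minimum give the partition. The only difference is cosmetic---the paper identifies this element as $\bigwedge(J_F\cap V)$ via the $\wedge$-irreducibles and the identity $\langle B_F^x\rangle\cap J_F=J_F\setminus\langle x\rangle^*$, whereas you work with $F\cap V$ directly through the maximal elements of $F\setminus\langle x\rangle^*$; your explicit observation that $F\cap V\neq\varnothing$ must be ensured (e.g.\ by taking $\hat{1}\in F$) is a hypothesis the paper leaves implicit.
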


We note that $B_F^x\subseteq J'_F$ is an antichain,
and for every $x\in F$ the correspondence $x\mapsto B_F^x\subseteq J'_F$ is one-to-one.
Conversely, any antichain $B$ of $J'_F$ (which is possibly the empty set)
corresponds uniquely to
$x_B := \bigwedge (F\setminus\langle B\rangle)$
in such a way that $B = B_F^{x_B}$,
where we set $\langle B\rangle = \varnothing$
if $B = \varnothing$ for convenience.

\begin{proof}[Proof of Lemma~\ref{partition}]
For any $V\in\mathcal{F}$
and any $x\in F$,
we have $V\in\mathcal{F}_x^{B_F^x}$
if and only if $V\cap\langle B_F^x\rangle = \varnothing$
and $J_F\cap\langle x\rangle^* \subset V$.
Since $\langle B_F^x\rangle\cap J_F =
J_F\setminus\langle x\rangle^*$,
we can find
$V\in\mathcal{F}_x^{B_F^x}$ if and only if
$x = \bigwedge (J_F\cap V)$.
\end{proof}

In the setting of Lemma~\ref{partition}
we can observe that
\begin{equation}\label{b.x}
x \le \bigwedge_{\{y,z\}\subseteq B_F^x} y\vee z,
\end{equation}
and by Lemmas~\ref{p.k.lemma} that
$\mathcal{P}_k\cap\mathcal{F}_x^{B_F^x} = \varnothing$
if $|B_F^x| \ge k+1$.
In what follows
we set $\nabla_B\varphi(x) = \varphi(x)$
if $B = \varnothing$ for convenience,
and obtain the following corollary to Lemma~\ref{partition}.

\begin{corollary}\label{p.k.bound}
Let $\varphi$ be a completely monotone Scott function on $\mathbb{K}$,
and let $F$ be a finite sublattice of $\mathbb{K}$.
Then the representation $\mu$ for $\varphi$
satisfies
$$
\mu\left(\mathcal{P}_k\right)
\le \sum \nabla_{B_F^x}\varphi(x),
$$
where the summation is over all $x\in F$
satisfying $|B_F^x| \le k$.
\end{corollary}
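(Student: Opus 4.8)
The plan is to read the bound straight off the $\mathcal{F}$-representation of $\varphi$ furnished by Theorem~\ref{m.cm}, using the partition of Lemma~\ref{partition} to split $\mathcal{P}_k$ and the vanishing criterion of Lemma~\ref{p.k.lemma} to kill the unwanted terms. Since $\varphi$ is a completely monotone Scott function on $\mathbb{K}$, Theorem~\ref{m.cm} yields a (unique) Radon measure $\mu$ on $\mathcal{F}$ with $\varphi(x) = \mu(\mathcal{F}_x)$, and by (\ref{nabla.varphi})--(\ref{F.x}), together with the convention $\nabla_{\varnothing}\varphi(x)=\varphi(x)$, we have $\nabla_B\varphi(x) = \mu(\mathcal{F}_x^B)$ for every finite $B\subseteq\mathbb{K}$, with $\mathcal{F}_x^B$ as in (\ref{f.b}). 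In particular each $\mathcal{F}_x^{B_F^x}$ is a Borel subset of $\mathcal{F}$, and $\mathcal{P}_k$ --- a compact subset of $\mathcal{S}$ contained in $\mathcal{F}$ --- is $\mu$-measurable.

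First I would apply Lemma~\ref{partition}: the finite family $\{\mathcal{F}_x^{B_F^x}\}_{x\in F}$ partitions $\mathcal{F}$, hence it restricts to a finite measurable partition of $\mathcal{P}_k$, and finite additivity of $\mu$ gives
\begin{equation*}
\mu(\mathcal{P}_k) = \sum_{x\in F}\mu\left(\mathcal{P}_k\cap\mathcal{F}_x^{B_F^x}\right).
\end{equation*}
Next I would discard every $x$ with $|B_F^x|\ge k+1$: by (\ref{b.x}) such an $x$ satisfies $x\le\bigwedge_{\{y,z\}\subseteq B_F^x}y\vee z$, so Lemma~\ref{p.k.lemma} forces $\mathcal{P}_k\cap\mathcal{F}_x^{B_F^x}=\varnothing$ and the corresponding summand vanishes. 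For the surviving terms, monotonicity of $\mu$ gives $\mu(\mathcal{P}_k\cap\mathcal{F}_x^{B_F^x})\le\mu(\mathcal{F}_x^{B_F^x})=\nabla_{B_F^x}\varphi(x)$, and summing over all $x\in F$ with $|B_F^x|\le k$ produces exactly the asserted inequality.

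No step here is a genuine obstacle --- the substantive work has already been carried out in Lemmas~\ref{partition} and~\ref{p.k.lemma} and in the representation (\ref{nabla.varphi}). The only points requiring a moment's care are the bookkeeping for the term $x=\hat{0}_F$, where $B_F^x=\varnothing$ and the summand is simply $\varphi(\hat{0}_F)=\mu(\mathcal{F}_{\hat{0}_F})$, and the observation that $\mathcal{P}_k\subseteq\mathcal{F}$ so that the partition of $\mathcal{F}$ really does restrict to a (finite, measurable) partition of $\mathcal{P}_k$; both are immediate from the constructions recalled above.
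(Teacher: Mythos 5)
Your proposal is correct and follows the same route the paper intends: the corollary is read off directly from the partition $\{\mathcal{F}_x^{B_F^x}\}_{x\in F}$ of Lemma~\ref{partition}, the vanishing of $\mathcal{P}_k\cap\mathcal{F}_x^{B_F^x}$ for $|B_F^x|\ge k+1$ via (\ref{b.x}) and Lemma~\ref{p.k.lemma}, and the identity $\nabla_{B_F^x}\varphi(x)=\mu(\mathcal{F}_x^{B_F^x})$ from (\ref{nabla.varphi}) with the convention $\nabla_{\varnothing}\varphi(x)=\varphi(x)$. Nothing is missing; your bookkeeping for $x=\hat{0}_F$ matches the paper's convention.
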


In the following lemma we present a construction
of approximation of $\varphi$ over $\mathcal{P}_k$.
The proof of Lemma~\ref{v.approx}
is preceded by the construction of antichains of $P$,
and followed by the proof of Theorem~\ref{val.rep}.

\begin{lemma}\label{v.approx}
For $\varphi$ and $F$ of Corollary~\ref{p.k.bound},
we can construct $\mu_F\in M^+(\mathcal{F})$
so that it satisfies $\varphi(x) = \mu_F(\mathcal{F}_x)$,
$x\in F$, and
\begin{equation}\label{v.bound}
\mu_F\left(\mathcal{F}\setminus\mathcal{P}_k\right)
\le \sum \nabla_B \varphi\left(
\bigwedge_{\{x,y\}\subseteq B} x\vee y
\right),
\end{equation}
where the summation is over antichains $B$ of
$J'_F$ with $|B| = k+1$.
\end{lemma}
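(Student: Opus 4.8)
The plan is to refine the discrete approximating measure used in the existence proof of Theorem~\ref{m.cm}: there the atoms $V_z\in\mathcal{F}$ only have to satisfy $V_z\cap F=\langle z\rangle^*\cap F$, whereas here I will additionally force each atom into the correct cell $\mathcal{F}_x^{B_F^x}$ of the partition of Lemma~\ref{partition} and, decisively, into $\mathcal{P}_k$ whenever $|B_F^x|\le k$. Fix $x\in F$ and let $B_F^x=\{b_1,\dots,b_m\}$ be the maximal elements of $F\setminus\langle x\rangle^*$ (so $m=0$ exactly when $x=\hat{0}_F$). Since $x\not\le b_j$, Proposition~\ref{abundant} provides a prime $p_j\in P$ of $\mathbb{K}$ with $b_j\le p_j$ and $x\not\le p_j$. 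Each $\mathbb{K}\setminus\langle p_j\rangle$ is a Scott open filter, so their finite intersection
\[
V_x\ :=\ \bigcap_{j=1}^m\bigl(\mathbb{K}\setminus\langle p_j\rangle\bigr)\ =\ \check{\mathbb{K}}\setminus\langle\{p_1,\dots,p_m\}\rangle
\]
is again a Scott open filter, i.e.\ $V_x\in\mathcal{F}$ (read $V_x=\mathbb{K}=\check{\mathbb{K}}\setminus\langle\hat{0}\rangle$ when $m=0$). A one-line check gives $V_x\cap F=\langle x\rangle^*\cap F$: if $w\in F$ with $w\ge x$ then $w\not\le p_j$ for all $j$, so $w\in V_x$; and if $w\in F\setminus\langle x\rangle^*$ then $w\le b_j\le p_j$ for some $j$, so $w\notin V_x$. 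Hence $V_x\in\mathcal{F}_x^{B_F^x}$; as the cells are disjoint, the $V_x$ are automatically distinct. Finally, if $m\le k$ then $V_x=\check{\mathbb{K}}\setminus\langle A\rangle$ with $A\subseteq\check{P}$ and $1\le|A|\le k$, so $V_x\in\mathcal{P}_k$.

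Next I will define the discrete measure $\mu_F$ on $\mathcal{F}$ by $\mu_F(\{V_x\})=r(x)$, $x\in F$, where $r$ is the M\"{o}bius inverse of $\varphi$ restricted to $F$. Complete monotonicity of $\varphi$ makes $r\ge0$, so $\mu_F\in M^+(\mathcal{F})$, and M\"{o}bius inversion gives $\mu_F(\mathcal{F}_y)=\sum_{x\le y}r(x)=\varphi(y)$ for $y\in F$ --- the first assertion. For the bound, by the construction $V_x\notin\mathcal{P}_k$ only when $|B_F^x|\ge k+1$, so $\mu_F(\mathcal{F}\setminus\mathcal{P}_k)\le\sum_{x:\,|B_F^x|\ge k+1}r(x)$. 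For each such $x$ pick a $(k+1)$-element subset $B\subseteq B_F^x$; it is an antichain of $J'_F$, and since $B\subseteq B_F^x$ gives $o_B:=\bigwedge_{\{u,v\}\subseteq B}u\vee v\ge\bigwedge_{\{u,v\}\subseteq B_F^x}u\vee v$, (\ref{b.x}) yields $x\le o_B$, while $x\not\le b$ for every $b\in B$; thus $x$ lies in the lower set $\langle o_B\rangle\setminus\langle B\rangle$ of $F$, whose mass under $m(E)=\sum_{z\in E}r(z)$ is $\nabla_B\varphi(o_B)$ by (\ref{nabla.r}). Summing over all $(k+1)$-element antichains $B$ of $J'_F$, each relevant $x$ is counted at least once and all terms are nonnegative, so
\[
\sum_B\nabla_B\varphi(o_B)=\sum_{z\in F}r(z)\,\bigl|\{B:\ z\in\langle o_B\rangle\setminus\langle B\rangle\}\bigr|\ \ge\ \sum_{x:\,|B_F^x|\ge k+1}r(x)\ \ge\ \mu_F(\mathcal{F}\setminus\mathcal{P}_k),
\]
which is (\ref{v.bound}); these $\mu_F$ then serve as the approximating net for $\varphi$ over $\mathcal{P}_k$ feeding into Theorem~\ref{val.rep}.

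The computations with $V_x\cap F$ and the M\"{o}bius measure are routine. The part I expect to be the real obstacle is the construction of the atoms: arranging that the atom sitting in $\mathcal{F}_x^{B_F^x}$ lands in $\mathcal{P}_k$ precisely when $|B_F^x|\le k$. This is exactly where Proposition~\ref{abundant} is indispensable --- the primes of $F$ need not remain prime in $\mathbb{K}$, so each $p_j$ must be manufactured inside $\mathbb{K}$, dominating $b_j$ but not $x$ --- and one must also check the combinatorial bookkeeping of the last display, namely that every ``bad'' index $x$ is captured, via (\ref{b.x}), by at least one $(k+1)$-element antichain contained in $B_F^x$.
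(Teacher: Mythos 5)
Your proof is correct and follows essentially the same route as the paper: a discrete measure with M\"{o}bius-inverse weights $r(x)$ placed on atoms of the form $\check{\mathbb{K}}\setminus\langle A\rangle$ built from primes of $\mathbb{K}$ supplied by Proposition~\ref{abundant}, verified to meet $F$ exactly in $\langle x\rangle^*\cap F$, followed by the covering bound over $(k+1)$-element antichains $B\subseteq B_F^x$ via (\ref{b.x}) and (\ref{nabla.r}). The only cosmetic difference is that you choose the primes separately for each pair $(x,b_j)$ with $x\not\le p_j$, whereas the paper chooses one prime $z(q)$ per $q\in J_F'$ using the coprime $\bar{q}$ (and records that $A_x$ is an antichain, a fact not needed for this lemma but used later for Corollary~\ref{w.approx}).
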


By $\langle x\rangle_F$ and $\langle x\rangle_F^*$
we denote the principal lower and upper set in the lattice $F$, respectively.
For each $q\in J'_F$ the \emph{coprime}
$\bar{q} := \bigwedge(F\setminus\langle q\rangle_F)$
satisfies
$\langle \bar{q}\rangle_F^* = F\setminus\langle q\rangle_F$.
By Proposition~\ref{abundant}
we can choose $z(q)\in P$ satisfying
$\bar{q}\not\le z(q)$ and $q \le z(q)$.
For each element $x\in F\setminus\{\hat{0}_F\}$
the corresponding antichain $B_F^x\subseteq J'_F$
satisfies $x = \bigwedge(F\setminus\langle B_F^x\rangle)$.
Suppose $\{q_1,q_2\}\subseteq B_F^x$.
Then we have $\bar{q}_i \le x$ and $x\not\le z(q_i)$ for $i=1,2$.
Since $x\le q_1\vee q_2 \le z(q_1)\vee z(q_2)$,
$z(q_1)$ and $z(q_2)$ are not comparable.
Thus, $A_x = \{z(q): q\in B_F^x\}\subseteq P$ is an antichain
if $x\neq\hat{0}_F$; set $A_{\hat{0}_F} = \{\hat{0}\}$.

\begin{proof}[Proof of Lemma~\ref{v.approx}]
Let $r$ be the M\"{o}bius inverse of $\varphi$ on $F$,
and let $m(C) = \sum_{x\in C} r(x)$ be the corresponding measure on
$F$.
Similarly to the proof of Theorem~\ref{m.cm},
we can construct a discrete measure $\mu_F$ of $\varphi$
on $\mathcal{F}$ by setting
$\mu_F(\{\check{\mathbb{K}}\setminus\langle A_x\rangle\}) = r(x)$,
$x\in F$.
Then we have
\begin{equation*}
  \mu_F\left(
  \mathcal{F}\setminus\mathcal{P}_k
  \right)
  \le \sum \mu_F(\{\check{\mathbb{K}}\setminus\langle A_x\rangle\})
  = m(C),
\end{equation*}
where the summation is over the subset
$C = \{x\in F: |B_F^x| \ge k+1\}$.

Observe that $C$
is covered by the collection of the subsets
$E_B = \{x\in F: B\subseteq B_F^x\}$
indexed by the antichains $B\subseteq J'_F$ such that $|B| = k+1$.
Let $B$ be such an antichain,
and let $b = \bigwedge_{\{x,y\}\subseteq B} x\vee y$.
For $x\in E_B$ we can observe that 
$x\not\in\langle B_F^x\rangle$,
and (\ref{b.x}) implies $x\le b$;
thus we have
\begin{math}
  x\in\langle b\rangle_F\setminus\langle B_F^x\rangle
  \subseteq
  \langle b\rangle_F\setminus\langle B\rangle .
\end{math}
Since
\begin{math}
  m(E_B) \le m(\langle b\rangle_F\setminus\langle B\rangle)
  = \nabla_{B} \varphi(b) ,
\end{math}
the upper bound of \eqref{v.bound} holds.
\end{proof}

\begin{proof}[Proof of Theorem~\ref{val.rep}]
Let $\varphi$ be a $k$-valuation,
and let $\{\mu_F\}_{F\in\mathbb{F}}$ be 
the net of measures $\mu_F$ of Lemma~\ref{v.approx}
which approximates $\varphi$ over $\mathcal{P}_k$.
Then the resulting representation
$\mu$ of Theorem~\ref{s.approx}
is that of Theorem~\ref{m.cm} by uniqueness.
\end{proof}

The proof of the Choquet theorem for conjugate $k$-valuations
parallels that of Theorem~\ref{m.ca}:
Once we obtain Lemma~\ref{k.char}, we can observe that the Radon
measure $\lambda$ of Theorem~\ref{m.ca} represents $\Phi$ over
the LCH $\mathcal{P}_k' := \mathcal{P}_k\setminus\{\mathbb{K}\}$;
thus, establishing Theorem~\ref{k.ca}.

\begin{lemma}\label{k.char}
$\Phi$ is a conjugate $k$-valuation if and only if
$\varphi_a$ of Lemma~\ref{ca.char}
is a $k$-valuation for every $a\in \mathbb{K}$.
\end{lemma}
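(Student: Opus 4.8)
The plan is to reduce the statement, via Lemma~\ref{ca.char}, to a single bookkeeping identity for successive differences. Since $\varphi_a=-\nabla_a\Phi$ (as in the proof of Lemma~\ref{ca.char}), and $\nabla_{B\cup\{a\}}=\nabla_a\circ\nabla_B$ with $\nabla_A$ insensitive to the order and repetition of its indices, one obtains for every $y,a\in\mathbb{K}$ and every nonempty finite $B\subseteq\mathbb{K}$
\begin{equation*}
  \nabla_B\varphi_a(y)=-\nabla_{B\cup\{a\}}\Phi(y)=\nabla_B\Phi(y\wedge a)-\nabla_B\Phi(y).
\end{equation*}
Writing $o_B:=\bigwedge_{\{x,y\}\subseteq B}x\vee y$ and evaluating at $y=o_B$ yields
\begin{equation*}
  \nabla_B\varphi_a(o_B)=\nabla_B\Phi(o_B\wedge a)-\nabla_B\Phi(o_B),\qquad a\in\mathbb{K}.
\end{equation*}
By Lemma~\ref{ca.char}, $\Phi$ is a completely alternating conjugate precisely when every $\varphi_a$ is a completely monotone Scott function, so the only remaining task is to match the extra condition~\eqref{k.val} across this identity; each implication will follow from a judicious choice of $a$.

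\emph{Only if.} Suppose $\Phi$ is a conjugate $k$-valuation: then $\Phi$ is completely alternating and $\nabla_B\Phi(o_B)=0$ for every $(k+1)$-element $B$. I would first record that complete alternation makes $\nabla_B\Phi$ order-reversing --- for $x\le x'$, complete alternation of $\Phi$ gives $\nabla_{B\cup\{x\}}\Phi(x')\le 0$, i.e.\ $\nabla_B\Phi(x')-\nabla_B\Phi(x)\le 0$. Since $o_B\wedge a\le o_B$, this forces $\nabla_B\Phi(o_B\wedge a)\ge\nabla_B\Phi(o_B)=0$, while $\nabla_B\Phi(o_B\wedge a)\le 0$ by complete alternation; hence $\nabla_B\Phi(o_B\wedge a)=0$, and the identity above gives $\nabla_B\varphi_a(o_B)=0$. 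As each $\varphi_a$ is a completely monotone Scott function by Lemma~\ref{ca.char}, it is therefore a $k$-valuation.

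\emph{If.} Suppose each $\varphi_a$ is a $k$-valuation; in particular every $\varphi_a$ is a completely monotone Scott function, so $\Phi$ is a completely alternating conjugate by Lemma~\ref{ca.char}. Fix a $(k+1)$-element $B$ and take $a:=\bigwedge B$. Since $\bigwedge B\le z$ for each $z\in B$, writing $B=B_0\cup\{z\}$ and applying $\nabla_z$ last gives $\nabla_B\Phi(\bigwedge B)=\nabla_{B_0}\Phi(\bigwedge B)-\nabla_{B_0}\Phi(\bigwedge B)=0$; also $\bigwedge B\le o_B$, so $o_B\wedge\bigwedge B=\bigwedge B$. The $k$-valuation property of $\varphi_a$ --- applied with the very same $o_B$, which depends only on $B$ and not on the function --- says $\nabla_B\varphi_a(o_B)=0$, so the displayed identity becomes $0=\nabla_B\Phi(\bigwedge B)-\nabla_B\Phi(o_B)=-\nabla_B\Phi(o_B)$. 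Thus $\nabla_B\Phi(o_B)=0$ for every $(k+1)$-element $B$, i.e.\ $\Phi$ satisfies~\eqref{k.val}, and $\Phi$ is a conjugate $k$-valuation.

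There is no substantial obstacle here beyond keeping the notation straight: the crucial observation is that the meet $o_B=\bigwedge_{\{x,y\}\subseteq B}x\vee y$ occurring in~\eqref{k.val} is determined by the index set $B$ alone, so the \emph{same} $o_B$ is tested for $\Phi$ and for every $\varphi_a$. Given that, the choice $a=\bigwedge B$ collapses one direction to the trivial fact $\nabla_B\Phi(\bigwedge B)=0$, and order-reversing monotonicity of $\nabla_B\Phi$ (a free consequence of complete alternation) handles the other. By Remark~\ref{incomp.rem} one could further restrict attention to antichains $B$, though that reduction plays no role in the argument above.
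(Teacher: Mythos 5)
Your proof is correct and takes essentially the same route as the paper: both rest on the identity $\nabla_B\varphi_a(o_B)=-\nabla_{B\cup\{a\}}\Phi(o_B)=\nabla_B\Phi(o_B\wedge a)-\nabla_B\Phi(o_B)$ (the paper writes the first term as $\nabla_{z_1\wedge a,\ldots,z_k\wedge a}\Phi$ at the corresponding meet of joins, which equals your $\nabla_B\Phi(o_B\wedge a)$ by distributivity), and both settle the ``if'' direction by choosing $a$ below a meet of elements of $B$. Your only variation is to make the term $\nabla_B\Phi(o_B\wedge a)$ vanish in the ``only if'' direction by squeezing it between the sign given by complete alternation and the order-reversal of $\nabla_B\Phi$, instead of invoking the comparable-pair observation of Remark~\ref{incomp.rem}; this is an equivalent, equally valid verification.
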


\begin{proof}
By the expansion formula of (\ref{nabla.phi})
we obtain
\begin{align*}
& \nabla_{z_1,\ldots,z_k}\varphi_a\left(\bigwedge_{i\neq j} z_i\vee z_j\right)
\\ & =
\nabla_{z_1\wedge a,\ldots,z_k\wedge a}
\Phi\left(\bigwedge_{i\neq j} (z_i\wedge a)\vee (z_j\wedge a)\right)
-
\nabla_{z_1,\ldots,z_k}\Phi\left(\bigwedge_{i\neq j} z_i\vee z_j\right) .
\end{align*}
which implies the $k$-valuation of $\varphi_a$
if $\Phi$ is a conjugate $k$-valuation.
As in Remark~\ref{incomp.rem},
the above successive difference with 
$z_1\wedge a,\ldots,z_k\wedge a$ vanishes
if it contains a comparable pair;
for example, if $a \le z_1\wedge z_2$.
Thus the $k$-valuation of $\varphi_a$
implies that of $\Phi$.
\end{proof}

\begin{theorem}\label{k.ca}
There exists a unique Radon measure $\lambda$
on $\mathcal{P}_k'$
such that $\Phi(x) = \lambda(\mathcal{P}_k^x)$ for any $x\in \mathbb{K}$
if and only if $\Phi$ is a conjugate $k$-valuation.
\end{theorem}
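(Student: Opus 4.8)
The plan is to obtain Theorem~\ref{k.ca} as a variant of Theorem~\ref{m.ca}, built in exactly the same way that Theorem~\ref{m.ca} was built from Theorem~\ref{m.cm} and Lemma~\ref{ca.char}: here Theorem~\ref{val.rep} plays the role of Theorem~\ref{m.cm}, Lemma~\ref{k.char} plays the role of Lemma~\ref{ca.char}, and Lemma~\ref{p.k.lemma} supplies the support statement. Throughout I would use that $\mathcal{P}_k$ is a compact, hence closed, subset of $\mathcal{S}$, so that a Radon measure on $\mathcal{P}_k'$ is the same object as a Radon measure on $\mathcal{F}'$ supported by $\mathcal{P}_k'$, and that $\mathcal{P}_k^x=\mathcal{F}^x\cap\mathcal{P}_k'$.

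For necessity I would argue: if a Radon measure $\lambda$ on $\mathcal{P}_k'$ with $\Phi(x)=\lambda(\mathcal{P}_k^x)$ exists, then viewed on $\mathcal{F}'$ it satisfies $\Phi(x)=\lambda(\mathcal{F}^x)$, so by the necessity part of Theorem~\ref{m.ca} the conjugate $\Phi$ is completely alternating and (\ref{nabla.lambda}) holds. Given a $(k+1)$-element subset $B$ of $\mathbb{K}$, put $w=\bigwedge_{\{x,y\}\subseteq B}x\vee y$; Lemma~\ref{p.k.lemma} gives $\mathcal{P}_k\cap\mathcal{F}_w^{B}=\varnothing$, hence $\lambda(\mathcal{F}_w^{B})=0$ because $\lambda$ lives on $\mathcal{P}_k'$, and then (\ref{nabla.lambda}) yields $\nabla_B\Phi(w)=-\lambda(\mathcal{F}_w^{B})=0$, which is exactly (\ref{k.val}). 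Thus $\Phi$ is a conjugate $k$-valuation.

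For sufficiency, suppose $\Phi$ is a conjugate $k$-valuation. By Lemma~\ref{k.char} the completely monotone Scott function $\varphi_a$ of Lemma~\ref{ca.char} is a $k$-valuation for every $a\in\mathbb{K}$, so Theorem~\ref{val.rep} gives a representation $\mu_a$ of $\varphi_a$ over $\mathcal{P}_k$; since $\varphi_a(a)=0$, $\varphi_a$ is nondegenerate, so $\mu_a(\{\mathbb{K}\})=0$ and $\mu_a$ may be regarded as a Radon measure on the LCH space $\mathcal{P}_k'$ (Definition~\ref{degen.def}). Repeating the computation behind (\ref{lambda.a}) one gets
\[
\mu_a(\mathcal{P}_k^x)=\varphi_a(\hat{1})-\varphi_a(x)=\Phi(a)+\Phi(x)-\Phi(x\wedge a)\le\Phi(x),
\]
with equality when $a\le x$ (so that $x\wedge a=a$). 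Hence the net $\{\mu_a\}_{a\in\mathbb{K}^*}$ meets the hypotheses of Theorem~\ref{t.approx} with $\mathcal{H}'=\mathcal{P}_k'$ — the bound $\mu_a(\mathcal{S}^x)=\mu_a(\mathcal{P}_k^x)\le\Phi(x)$ holds for every index, and taking $\beta=x$ gives $\mu_a(\mathcal{P}_k^x)=\Phi(x)$ for all $a\succ\beta$ — so Theorem~\ref{t.approx} produces a Radon measure $\lambda$ on $\mathcal{P}_k'$, supported by $\mathcal{P}_k'$, with $\Phi(x)=\lambda(\mathcal{P}_k^x)$ for all $x$.

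For uniqueness, any such $\lambda$ regarded on $\mathcal{F}'$ satisfies $\Phi(x)=\lambda(\mathcal{F}^x)$, hence is forced to be the unique measure of Theorem~\ref{m.ca}; in particular the $\lambda$ just constructed is that one. I do not expect a genuine obstacle: the real content is carried by Lemma~\ref{k.char}, Theorem~\ref{val.rep}, Theorem~\ref{t.approx} and Theorem~\ref{m.ca}, and the only points needing attention are the bookkeeping identifications noted above (compactness of $\mathcal{P}_k$ in $\mathcal{S}$; nondegeneracy of $\varphi_a$, so that $\mu_a(\{\mathbb{K}\})=0$ and $\mu_a(\mathcal{P}_k^x)=\mu_a(\mathcal{P}_k')-\mu_a(\mathcal{P}_k\cap\mathcal{S}_x)$) together with the observation that $\mathbb{K}^*$ is directed, so that $\{\mu_a\}$ is indeed a net.
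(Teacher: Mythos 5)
Your proposal is correct and follows essentially the same route as the paper: Lemma~\ref{k.char} to reduce to the $k$-valuations $\varphi_a$, Theorem~\ref{val.rep} (hence support on $\mathcal{P}_k$) for their representations, the computation of (\ref{lambda.a}) together with Theorem~\ref{t.approx} applied with $\mathcal{H}'=\mathcal{P}_k'$ for existence, and the uniqueness of Theorem~\ref{m.ca} plus Lemma~\ref{p.k.lemma} with (\ref{nabla.lambda}) for uniqueness and necessity. The bookkeeping points you flag (compactness of $\mathcal{P}_k$, nondegeneracy of $\varphi_a$, identification of measures on $\mathcal{P}_k'$ with measures on $\mathcal{F}'$ supported there) are exactly the ones the paper relies on implicitly.
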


In the context of Section~\ref{val.intro}
(see Remark~\ref{p.k.rem})
the LCH
$\mathcal{P}_k' = \mathcal{P}_k\setminus\{\varnothing\}$
is the collection of non-empty finite
subsets of at most size~$k$ in $R$, so
Theorem~\ref{val.2}
is an immediate corollary to Theorem~\ref{k.ca}.

\section{Locally finite valuations}
\label{lfv.sec}

We say that an open filter $V$ in a domain $\mathbb{L}$
is \emph{$\sigma$-compact}
if there is a countable set $\{w_i\}$ of $V$
such that $V = \bigcup_{i=1}^{\infty}\langle w_i\rangle^*$.
In what follows we assume that
$\mathbb{K}$ is $\sigma$-compact,
and fix a sequence $\{w_i\}\subseteq\mathbb{K}$
that satisfies $\mathbb{K} = \bigcup_{i=1}^{\infty}\langle w_i\rangle^*$
and $w_{i+1}\ll w_i$ for $i=1,2,\ldots$.
Then we can introduce
a continuous map $\Xi_i$
from $F\in\mathcal{F}$ to
$\Xi_i(F) := F\cap\llangle w_i \rrangle^*$,
and the compact subset
$$
\Xi_{i}\left(\mathcal{P}_{k}\right)
= \{\llangle w_i \rrangle^*\setminus\langle A \rangle:
A\subseteq P,\, 0 \le |A| \le k\}
$$
of $\mathcal{F}$.
Thus, we can define the $F_{\sigma}$-set
$\bigcup_{k=1}^{\infty}
\Xi_{i}^{-1}\left(\Xi_{i}\left(\mathcal{P}_{k}\right)\right)$,
and the $F_{\sigma\delta}$-set
\begin{equation}\label{p.lf}
\mathcal{P}_{\mathrm{lf}}
:= \bigcap_{i=1}^{\infty}\bigcup_{k=1}^{\infty}
\Xi_{i}^{-1}\left(\Xi_{i}\left(\mathcal{P}_{k}\right)\right)
\end{equation}

In Example~\ref{class.k}
we can further assume that the LCH space $R$ is $\sigma$-compact.
Then we can construct a sequence $W_i$ of compact subsets of $R$ so that
$R = \bigcup_{i=1}^\infty \mathrm{int}(W_i)$
and $W_i\subseteq\mathrm{int}(W_{i+1})$ for $i=1,2,\ldots$,
therefore the domain $\mathcal{K}$ is $\sigma$-compact.
In light of Theorem~\ref{hofmann.mislove}
we can view $\Xi_{i}$ as a map from $F\in\mathcal{F}$
to the closed set $F\cap\mathrm{int}(W_i)$
on the space of $\mathrm{int}(W_i)$.
Thus, the closed subset
$\Xi_{i}^{-1}\left(\Xi_{i}\left(\mathcal{P}_{k}\right)\right)$
of $\mathcal{F}$ represents
the collection of the closed subsets
$F\cup A$ satisfying $F\cap\mathrm{int}(W_i)=\varnothing$
and $A\subseteq\mathrm{int}(W_i)$ with $|A|\le k$.
Hence, we can identify
the $F_{\sigma\delta}$-set $\mathcal{P}_{\mathrm{lf}}$ of (\ref{p.lf})
with the collection $\mathcal{P}_{\mathrm{lf}}$
of locally finite subsets in Section~\ref{val.intro}.

\begin{remark}\label{separable}
Norberg~\cite{norberg} in his proof of Choquet theorems on domains
assumed the following property for a domain $\mathbb{L}$.
A subset $Q$ of $\mathbb{L}$ is said to be \emph{separating}
if $x \ll y$ implies $x\le q\le y$ for some $q\in Q$.
It is easily observed that if $Q$ is a
separating subset of $\mathbb{L}$ then an open filter $V$ of $\mathbb{L}$ can
be expressed as $V = \bigcup_{q\in Q\cap V}\langle q \rangle^*$.
Therefore,
if a domain $\mathbb{L}$ has a countable separating subset then any open
filter of $\mathbb{L}$ is $\sigma$-compact.
In Example~\ref{class.k}
the domain $\mathcal{K}$ has a countable separating subset
if $R$ is second-countable.
In the setting of Definition~\ref{scott.def}
the lattice $\mathcal{F}$ is second-countable
if $\mathbb{K}$ has a countable separating subset
(Theorem~3.1 of~\cite{norberg}).
\end{remark}

\begin{example}\label{nonsep.ex}
Here we continue Example~\ref{ex.nonsep}.
Let $Q_1$ be the collection of rational numbers on $R_1$.
Then $R$ is not second-countable, having an uncountable open base
consisting of
$G_{x_0,r_1,s_1} = \{(x_0,x_1): x_1\in (r_1-s_1,r_1+s_1)\cap R_1\}$
with $x_0\in R_0$ and $r_1,s_1\in Q_1$.
In order to separate each pair
$\{(x_0,x_1): a_1\le x_1\le b_1\}
\ll \{(x_0,x_1): a_2\le x_1\le b_2\}$
of compact subsets in $R$,
we must have an uncountable separating subset of $\mathcal{K}$.
\end{example}

In the proof of Lemma~\ref{v.approx}
we can observe that
$r(x) = m(\langle x\rangle\setminus\langle B_F^x\rangle)
= \nabla_{B_F^x} \varphi(x)$ for $x \in F$,
and that $\mu_F(\mathcal{P}_k) = \sum r(x)$,
where the summation is over $\{x\in F: 0\le |B_F^x| \le k\}$.
This observation leads to
the following corollary to Lemma~\ref{v.approx}.

\begin{corollary}\label{w.approx}
The discrete measure $\mu_F$ of
Lemma~\ref{v.approx} satisfies
\begin{equation}\label{w.bound}
\mu_F\left(\Xi_i^{-1}(\Xi_i(\mathcal{P}_k))\right)
\ge \sum \nabla_{B_F^x}\varphi(x),
\end{equation}
where the summation is over all $x\in F$
satisfying $0 \le |B_F^x| \le k$.
The equality in (\ref{w.bound}) holds
if $w_i \ll \hat{0}_F$.
\end{corollary}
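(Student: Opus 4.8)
The plan is to extract both assertions directly from the explicit discrete measure $\mu_F$ produced in the proof of Lemma~\ref{v.approx}. Recall that $\mu_F$ is carried by the points $\check{\mathbb{K}}\setminus\langle A_x\rangle$, $x\in F$, with $\mu_F(\{\check{\mathbb{K}}\setminus\langle A_x\rangle\}) = r(x) = \nabla_{B_F^x}\varphi(x)$, where $A_x = \{z(q):q\in B_F^x\}$ for $x\neq\hat{0}_F$ and $A_{\hat{0}_F}=\{\hat{0}\}$, and that (as recorded just before this corollary) $\mu_F(\mathcal{P}_k) = \sum\nabla_{B_F^x}\varphi(x)$, the sum running over $x\in F$ with $|B_F^x|\le k$. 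The inequality~\eqref{w.bound} is then immediate: since $\Xi_i(V)\in\Xi_i(\mathcal{P}_k)$ for each $V\in\mathcal{P}_k$, one has $\mathcal{P}_k\subseteq\Xi_i^{-1}(\Xi_i(\mathcal{P}_k))$, hence $\mu_F(\Xi_i^{-1}(\Xi_i(\mathcal{P}_k)))\ge\mu_F(\mathcal{P}_k)$, for every $i$.

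The real content is the equality when $w_i\ll\hat{0}_F$, for which I would prove the reverse bound $\mu_F(\Xi_i^{-1}(\Xi_i(\mathcal{P}_k)))\le\mu_F(\mathcal{P}_k)$. Since $\mu_F$ is carried by the points $\check{\mathbb{K}}\setminus\langle A_x\rangle$, and those with $|B_F^x|\le k$ already lie in $\mathcal{P}_k$ (they are of the form $\check{\mathbb{K}}\setminus\langle A\rangle$ with $A\subseteq\check{P}$ and $1\le|A|\le k$), it suffices to show that, under the hypothesis $w_i\ll\hat{0}_F$, \emph{no} point $\check{\mathbb{K}}\setminus\langle A_x\rangle$ with $|B_F^x|\ge k+1$ belongs to $\Xi_i^{-1}(\Xi_i(\mathcal{P}_k))$. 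Here I would use that $q\mapsto z(q)$ is injective on the antichain $B_F^x$, so $|A_x| = |B_F^x|\ge k+1$.

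The crucial step goes as follows. Because $w_i\ll\hat{0}_F = \bigwedge F\le q\le z(q)$ for every $q\in B_F^x$, we get $w_i\ll z(q)$ and hence $A_x\subseteq\llangle w_i\rrangle^*$; as $\llangle w_i\rrangle^*$ is an upper set, $\Xi_i(\check{\mathbb{K}}\setminus\langle A_x\rangle) = \llangle w_i\rrangle^*\setminus\langle A_x\rangle$, i.e.\ intersecting with $\llangle w_i\rrangle^*$ still ``sees'' the full antichain $A_x$. If this set coincided with $\llangle w_i\rrangle^*\setminus\langle A\rangle$ for some $A\subseteq P$ with $|A|\le k$, then $\llangle w_i\rrangle^*\cap\langle A_x\rangle = \llangle w_i\rrangle^*\cap\langle A\rangle$, so each $z\in A_x\subseteq\llangle w_i\rrangle^*\cap\langle A_x\rangle$ admits $a\in A$ with $z\le a$; since $a$ then lies in $\llangle w_i\rrangle^*\cap\langle A\rangle = \llangle w_i\rrangle^*\cap\langle A_x\rangle$, we have $a\le z'$ for some $z'\in A_x$, whence $z\le a\le z'$ and the antichain property of $A_x$ force $z = z' = a$. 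Thus $A_x\subseteq A$, contradicting $|A_x|\ge k+1 > |A|$, and the equality follows. The only obstacle I anticipate is exactly this last step --- guaranteeing that passing to $\Xi_i$ does not collapse any of the prime generators of $\langle A_x\rangle$ below $k+1$ in number --- and it is handled precisely by $w_i\ll\hat{0}_F$, which pushes all of $F$, and with it all the chosen primes $z(q)$, into $\llangle w_i\rrangle^*$. I would also note in passing, without belabouring it, that the points $\check{\mathbb{K}}\setminus\langle A_x\rangle$ carrying positive mass are mutually distinct as $x$ ranges over $F$, so the split of $\mu_F(\mathcal{P}_k)$ according to $|B_F^x|$ is exact.
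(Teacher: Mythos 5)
Your proof is correct and follows the route the paper intends: the inequality comes from $\mathcal{P}_k\subseteq\Xi_i^{-1}(\Xi_i(\mathcal{P}_k))$ together with the identity $\mu_F(\mathcal{P}_k)=\sum\nabla_{B_F^x}\varphi(x)$ recorded just before the corollary, and the equality case rests on $w_i\ll\hat{0}_F$ forcing $A_x\subseteq\llangle w_i\rrangle^*$ so that $\Xi_i$ cannot collapse an antichain of $\ge k+1$ primes onto one of size $\le k$. The paper states the corollary as an immediate consequence without spelling this out, and your argument supplies exactly the missing details (including the antichain comparison that rules out $\llangle w_i\rrangle^*\cap\langle A_x\rangle=\llangle w_i\rrangle^*\cap\langle A\rangle$ with $|A|\le k$), so no gap remains.
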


\begin{definition}\label{lfv.def}
A completely monotone Scott function $\varphi$
is called a \emph{locally finite valuation} if
for any $w\in \mathbb{K}$ and $\delta > 0$
we can find some positive integer $k$ so that
the right-hand side of (\ref{w.bound}) has
the lower bound $(1-\delta)$
whenever a finite sublattice $F$ satisfies $w \le \hat{0}_F$.
\end{definition}

\begin{theorem}\label{lfv.rep}
Let $\varphi$ be a completely monotone Scott function on $\mathbb{K}$,
and let $\mu$ be the representation of $\varphi$ over $\mathcal{F}$.
Then $\varphi$ is a locally finite valuation
if and only if $\mu$ satisfies
$\mu(\mathcal{P}_{\mathrm{lf}}) = 1$.
\end{theorem}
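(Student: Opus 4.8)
The plan is to reduce the whole statement to a single identity for the numbers
\[
  a_{i,k}\;:=\;\mu\bigl(\Xi_i^{-1}(\Xi_i(\mathcal{P}_k))\bigr),
\]
namely
\[
  a_{i,k}\;=\;\inf_{F}\ \sum_{x\in F,\ |B_F^x|\le k}\nabla_{B_F^x}\varphi(x),
\]
the infimum being over finite sublattices $F$ of $\mathbb{K}$ with $w_i\ll\hat{0}_F$. Granting this, the theorem follows quickly. Since $\mu$ is a probability measure and, by \eqref{p.lf}, $\mathcal{P}_{\mathrm{lf}}=\bigcap_i\bigcup_k\Xi_i^{-1}(\Xi_i(\mathcal{P}_k))$ with the inner union increasing in $k$, one has $\mu(\mathcal{P}_{\mathrm{lf}})=1$ iff $\sup_k a_{i,k}=1$ for every $i$. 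For the forward implication: given a locally finite valuation $\varphi$ and an index $i$, apply Definition~\ref{lfv.def} with $w=w_i$ to get a $k$ making the right-hand sum $\ge 1-\delta$ for all $F$ with $w_i\le\hat{0}_F$; since $w_i\ll\hat{0}_F$ implies $w_i\le\hat{0}_F$, the identity gives $a_{i,k}\ge 1-\delta$, so $\sup_k a_{i,k}=1$. For the converse: given $w$ and $\delta$, pick $j$ with $w_j\le w$ (possible since $\mathbb{K}=\bigcup_j\langle w_j\rangle^*$) and set $i=j+1$; then $w_i=w_{j+1}\ll w_j\le w$, so $w\le\hat{0}_F$ forces $w_i\ll\hat{0}_F$, and a $k$ with $a_{i,k}>1-\delta$ makes the sum $>1-\delta$ for every $F$ with $w\le\hat{0}_F$, which is exactly the defining inequality. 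Thus everything rests on the identity.

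To prove the identity I would pass to the sub-domain $\mathbb{K}_i:=\llangle w_i\rrangle^*$. One first checks that this is a Scott-open filter of $\mathbb{K}$ which, in the inherited order, is again a distributive continuous sup-semilattice with top $\hat{1}$ and a multiplicative way-below relation agreeing with the restriction of $\ll$, and with $\hat{1}\ll\hat{1}$; in particular Assumption~\ref{k.ass} and the running distributivity hold for $\mathbb{K}_i$, so all results of Sections~\ref{sec.cm}--\ref{val.sec} apply to it. The restriction $\varphi_i:=\varphi|_{\mathbb{K}_i}$ is a completely monotone Scott function with $\varphi_i(\hat{1})=1$. The map $\Xi_i\colon F\mapsto F\cap\mathbb{K}_i$ realizes the image $\Xi_i(\mathcal{F})\subseteq\mathcal{F}$ as the lattice $\mathrm{OFilt}(\mathbb{K}_i)$; since $\Xi_i^{-1}(\{\mathcal{V}:x\in\mathcal{V}\})=\mathcal{F}_x$ for $x\in\mathbb{K}_i$, the pushforward $(\Xi_i)_*\mu$ satisfies $(\Xi_i)_*\mu(\{\mathcal{V}:x\in\mathcal{V}\})=\varphi_i(x)$, whence by the uniqueness in Theorem~\ref{m.cm} (via Lemma~\ref{mu.unique}) it is the representation $\mu^{(i)}$ of $\varphi_i$ over $\mathrm{OFilt}(\mathbb{K}_i)$. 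Moreover the prime elements of $\mathbb{K}_i$ are exactly $P\cap\mathbb{K}_i$, so $\Xi_i(\mathcal{P}_k)=\mathcal{P}_k(\mathbb{K}_i)$, the ``$\mathcal{P}_k$'' built inside $\mathbb{K}_i$, and therefore $a_{i,k}=\mu^{(i)}(\mathcal{P}_k(\mathbb{K}_i))$. Finally, the finite sublattices of $\mathbb{K}_i$ are precisely the finite sublattices $F$ of $\mathbb{K}$ with $w_i\ll\hat{0}_F$ (their elements lie above $\hat{0}_F$, hence way above $w_i$, hence in $\mathbb{K}_i$, which is closed under finite meets), and on such $F$ the differences $\nabla_{B_F^x}\varphi_i(x)$ and $\nabla_{B_F^x}\varphi(x)$ coincide.

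It then remains to show, entirely inside $\mathbb{K}_i$, that $\mu^{(i)}(\mathcal{P}_k(\mathbb{K}_i))=\inf_F\sum_{x\in F,\ |B_F^x|\le k}\nabla_{B_F^x}\varphi_i(x)$ with $F$ ranging over finite sublattices of $\mathbb{K}_i$. Corollary~\ref{p.k.bound} gives ``$\le$'' for each individual $F$. For ``$\ge$'', take the discrete approximants $\mu_F^{(i)}\in M^+(\mathrm{OFilt}(\mathbb{K}_i))$ of Lemma~\ref{v.approx}; by the remark preceding Corollary~\ref{w.approx} each $\mu_F^{(i)}(\mathcal{P}_k(\mathbb{K}_i))$ equals the corresponding sum, and by Theorem~\ref{s.approx} a subnet converges weak$^{*}$ to $\mu^{(i)}$. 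Since $\mathcal{P}_k(\mathbb{K}_i)$ is closed, Lemma~\ref{portmanteau} applied to its complement yields $\mu^{(i)}(\mathcal{P}_k(\mathbb{K}_i))\ge\limsup_{F'}\mu_{F'}^{(i)}(\mathcal{P}_k(\mathbb{K}_i))$, and the two bounds squeeze to equality. Here it is \eqref{b.x} together with Lemma~\ref{p.k.lemma} that makes $\mathcal{P}_k(\mathbb{K}_i)$ disjoint from $\mathcal{F}_x^{B_F^x}$ whenever $|B_F^x|>k$, which is the geometric input behind Corollary~\ref{p.k.bound}.

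The hard part is the second paragraph: verifying that $\llangle w_i\rrangle^*$ genuinely inherits the full package of structure required by the running hypotheses, and that the identifications $(\Xi_i)_*\mu=\mu^{(i)}$ and $\Xi_i(\mathcal{P}_k)=\mathcal{P}_k(\mathbb{K}_i)$ are legitimate. Once that bookkeeping is secured, the two appeals to Corollary~\ref{p.k.bound}/Lemma~\ref{v.approx} and to Lemma~\ref{portmanteau}, and the shuffling between the conditions $w\le\hat{0}_F$ and $w_i\ll\hat{0}_F$ via $w_{j+1}\ll w_j$, are routine.
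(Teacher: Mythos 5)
Your proposal is correct and is essentially the paper's own argument: both hinge on passing to the subdomain $\llangle w_i\rrangle^*$ and invoking the uniqueness of Theorem~\ref{m.cm} there, on the discrete approximants of Lemma~\ref{v.approx} with the mass computations of Corollary~\ref{p.k.bound} and Corollary~\ref{w.approx}, and on the closed-set consequence of Lemma~\ref{portmanteau} for a weak$^*$ convergent subnet. The differences are presentational: you condense the two implications into the single identity $\mu\bigl(\Xi_i^{-1}(\Xi_i(\mathcal{P}_k))\bigr)=\inf_F\sum\nabla_{B_F^x}\varphi(x)$ and build the approximants intrinsically inside $\llangle w_i\rrangle^*$, whereas the paper takes a subnet of the global approximating net and pushes it forward via $\Xi_i$; the inheritance checks you flag (Assumption~\ref{k.ass} and distributivity for $\llangle w_i\rrangle^*$, primes relativizing, $(\Xi_i)_*\mu$ being the representation of the restricted $\varphi$) are routine and are relied upon implicitly by the paper as well.
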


\begin{proof}
Suppose $\mu(\mathcal{P}_{\mathrm{lf}}) = 1$.
Then we have
$\mu\left(
\bigcup_{k=1}^\infty\Xi_{i}^{-1}(\Xi_{i}(\mathcal{P}_{k}))
\right) = 1$ for every $i$.
Let $w\in \mathbb{K}$ and $\delta > 0$ be fixed.
Then we can find $w_i \ll w$ and choose $k$ so that
$\mu\left(\Xi_{i}^{-1}(\Xi_{i}(\mathcal{P}_{k}))\right)
\ge 1 - \delta$.
Let $F$ be a finite sublattice of $\mathbb{K}$ satisfying
$w \le \hat{0}_F$.
If $|B_F^x| \ge k+1$ then
we have
$\mathcal{F}_x^{B_F^x}\cap\Xi_{i}^{-1}(
\Xi_{i}(\mathcal{P}_k))
=\Xi_{i}^{-1}(\Xi_{i}(\mathcal{F}_x^{B_F^x}\cap\mathcal{P}_k))
= \varnothing$.
As we demonstrated in Corollary~\ref{p.k.bound},
we can see that
$\mu\left(\Xi_{i}^{-1}(\Xi_{i}(\mathcal{P}_{k}))\right)$
is the lower bound
for the right-hand side of (\ref{w.bound}),
so $\varphi$ is a locally finite valuation.

Conversely, suppose $\varphi$ is a locally finite valuation.
For each $w_i$ we can find a subnet $\{\mu_{F'}\}$
of $\mu_F$'s constructed in Lemma~\ref{v.approx}
with the index of subnet consisting
of sublattices $F'$ satisfying $w_i \ll\bigwedge F'$.
Let $\mu$ be a limit of the net $\{\mu_{F}\}$,
and let $\mu_i$ be a limit of the subnet $\{\mu_{F'}\}$.
Then we can construct a Radon measure
$\tilde{\mu}_i(\mathcal{U}) = \mu_i(\Xi_{i}^{-1}(\mathcal{U}))$
on Borel-measurable subsets $\mathcal{U}$ of
the compact space $\mathrm{OFilt}(\llangle w_i\rrangle^*)
= \Xi_{i}(\mathcal{F})$,
which represents $\varphi$
restricted to the domain $\llangle w_i\rrangle^*$.
Since $\tilde{\mu}_i$ must be uniquely determined
by Theorem~\ref{m.cm},
we must have
$\mu_i(\Xi_{i}^{-1}(\mathcal{U})) = \mu(\Xi_{i}^{-1}(\mathcal{U}))$.

Let $\delta > 0$ be arbitrary.
By Corollary~\ref{w.approx} and Definition~\ref{lfv.def}
we can find a sufficiently large $k$ so that
$\tilde{\mu}_i\left(\Xi_i(\mathcal{P}_k)\right)
\ge\limsup_{F'}
\mu_{F'}\left(\Xi_{i}^{-1}(\Xi_i(\mathcal{P}_k))\right)
\ge 1-\delta$.
By applying the Lebesgue's convergence theorem
we obtain
\begin{equation*}
  \mu\left(\bigcup_{k=1}^\infty
  \Xi_{i}^{-1}(\Xi_i(\mathcal{P}_{k}))\right)
  = \tilde{\mu}_i\left(
  \bigcup_{k=1}^\infty\Xi_i(\mathcal{P}_{k})
  \right)
  = \lim_{k\to\infty}\tilde{\mu}_i\left(
  \Xi_i(\mathcal{P}_k)
  \right) = 1
\end{equation*}
and
\begin{math}
  \mu(\mathcal{P}_{\mathrm{lf}})
  = \lim_{i\to\infty}\mu\left(\bigcup_{k=1}^\infty
  \Xi_{i}^{-1}(\Xi_i(\mathcal{P}_{k}))
  \right) = 1 .
\end{math}
\end{proof}

To construct a finite sublattice $F$ of $\mathbb{K}$,
we may start with a finite sup-subsemilattice $G$ of $\mathbb{K}$,
and generate the sublattice $F$.
Then the collection $J_F'$ of prime elements of $F$
is contained in $G' = G\setminus\{\bigvee G\}$
(although not necessarily equal to it).
Then the summation of (\ref{w.bound})
can be somewhat simplified as follows.

\begin{proposition}\label{bound.prop}
Let $G$ be a finite sup-subsemilattce of $\mathbb{K}$, and
let $F$ be the sublattice generated by $G$.
Then the right-hand side of (\ref{w.bound}) is equal to
\begin{equation}\label{lfv.bound}
  \sum \nabla_{B}\varphi\left(
  \bigwedge(G\setminus\langle B\rangle)
  \right)
\end{equation}
where the summation is over all the antichains $B$'s
in $G'$ satisfying $0\le |B|\le k$.
\end{proposition}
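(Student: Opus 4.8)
The plan is to match the two sums term by term through the one-to-one correspondence, recorded right after Lemma~\ref{partition}, between $F$ and the antichains of $J'_F$: there $x\in F$ corresponds to $B_F^x\subseteq J'_F$, and conversely an antichain $B\subseteq J'_F$ corresponds to $x_B:=\bigwedge(F\setminus\langle B\rangle)$ with $B=B_F^{x_B}$ (throughout I keep the conventions $\langle\varnothing\rangle=\varnothing$, $\nabla_\varnothing\varphi=\varphi$, and $B_F^{\hat{0}_F}=\varnothing$). Rewriting the right-hand side of \eqref{w.bound} as $\sum\nabla_{B_F^x}\varphi(x)$ over $x\in F$ with $|B_F^x|\le k$ and pushing it through this correspondence turns it into $\sum\nabla_B\varphi(x_B)$ over antichains $B\subseteq J'_F$ with $|B|\le k$. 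It will then suffice to show (a) that $x_B=\bigwedge(G\setminus\langle B\rangle)$ for every $B\subseteq G'$, so that the middle sum is the restriction of the statement's sum to the antichains lying in $J'_F$, and (b) that passing from antichains of $J'_F$ to all antichains of $G'$ only introduces vanishing terms.

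For (a) I would use that, since $\mathbb{K}$ is distributive and $G$ is already a sup-subsemilattice, the sublattice it generates is $F=\{\bigwedge A:\varnothing\neq A\subseteq G\}$ (the construction of Section~\ref{val.sec} with $H=G$), with top element $\hat{1}_F=\bigvee G\in G$; so for $B\subseteq G'$ the set $F\setminus\langle B\rangle$ is nonempty and already meets $G$. The inequality $\bigwedge(F\setminus\langle B\rangle)\le\bigwedge(G\setminus\langle B\rangle)$ is immediate, and conversely, if $y=\bigwedge A$ lies in $F\setminus\langle B\rangle$ with $A\subseteq G$, then no $a\in A$ can satisfy $a\le b$ for a $b\in B$ (otherwise $y\le a\le b$, contradicting $y\notin\langle B\rangle$), so $A\subseteq G\setminus\langle B\rangle$ and $y\ge\bigwedge(G\setminus\langle B\rangle)$. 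Hence $x_B=\bigwedge(F\setminus\langle B\rangle)=\bigwedge(G\setminus\langle B\rangle)$ for all $B\subseteq G'$, whether or not $B$ is an antichain.

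For (b) the key step is a dichotomy for a fixed antichain $B\subseteq G'$. If $x_B\le b$ for some $b\in B$, then, since $\nabla$ is insensitive to the order and to repetitions of its subscripts, $\nabla_B\varphi(x_B)=\nabla_{B\setminus\{b\}}\varphi(x_B)-\nabla_{B\setminus\{b\}}\varphi(x_B\wedge b)=0$, as $x_B\wedge b=x_B$. If instead $x_B\not\le b$ for every $b\in B$, then $x_B\in F\setminus\langle B\rangle$; being the complement in $F$ of a lower set, $F\setminus\langle B\rangle$ is an upper set of $F$, and having least element $x_B$ it equals $\langle x_B\rangle_F^*$. Thus $F\setminus\langle x_B\rangle_F^*$ is the lower set of $F$ generated by the antichain $B$, so $B$ is its set of maximal elements, i.e.\ $B=B_F^{x_B}$, and in particular $B\subseteq J'_F$. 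Therefore any antichain $B\subseteq G'$ with $B\not\subseteq J'_F$ falls into the first alternative and contributes $0$, while conversely any antichain $B\subseteq J'_F$ satisfies $B=B_F^{x_B}$ and so lands in the second alternative (a maximal element of $F\setminus\langle x_B\rangle_F^*$ never lies above $x_B$), hence is one of the terms carried over from \eqref{w.bound}.

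Combining (a) and (b), and noting that $|B|=|B_F^x|$ along the correspondence so that the cut $\le k$ is preserved, I would conclude
\begin{equation*}
  \sum\nabla_B\varphi\Bigl(\bigwedge(G\setminus\langle B\rangle)\Bigr)
  =\sum\nabla_B\varphi(x_B)
  =\sum\nabla_{B_F^x}\varphi(x),
\end{equation*}
where the left sum runs over antichains $B\subseteq G'$ with $0\le|B|\le k$, the middle one over antichains $B\subseteq J'_F$ with $0\le|B|\le k$, and the right one over $x\in F$ with $0\le|B_F^x|\le k$; the last expression is the right-hand side of \eqref{w.bound}, as desired. The step I expect to be the main obstacle is the dichotomy of the third paragraph --- recognizing that the ``extra'' antichains that appear precisely when $G'$ strictly contains $J'_F$ are exactly those for which $\bigwedge(G\setminus\langle B\rangle)$ drops below one of the generators in $B$ (so the successive difference collapses), and that an antichain avoiding this situation must coincide with some $B_F^x$.
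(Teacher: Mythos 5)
Your proof is correct and takes essentially the same route as the paper's: both identify the terms of (\ref{w.bound}) with the antichains of $J'_F$ via the correspondence $x\leftrightarrow B_F^x$ and show that the remaining antichains of $G'$ contribute zero because $\bigwedge(G\setminus\langle B\rangle)$ lands in $\langle B\rangle$, collapsing the successive difference. The only differences are cosmetic: you organize the dichotomy by whether $x_B\le b$ for some $b\in B$ (the paper splits on $B\subseteq J_F$ and exhibits two minimal elements of $F\setminus\langle B\rangle$), and you spell out the identity $\bigwedge(F\setminus\langle B\rangle)=\bigwedge(G\setminus\langle B\rangle)$, which the paper asserts without detail.
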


\begin{proof}
Let $B$ be an antichain in $G'$.
If $B\subseteq J_F$ then we can find $B = B_F^x$
for which we find
$\bigwedge(G\setminus\langle B\rangle) = x$.
Otherwise, some element of $B$
is not $\wedge$-irreducible,
and $F\setminus\langle B\rangle$ must have at least two distinct
minimal elements, say $y_1$ and $y_2$, so that
$y_1\wedge y_2\in\langle B\rangle$.
Since $\bigwedge(G\setminus\langle B\rangle)\le y_1\wedge y_2$,
we must have
$\nabla_{B}\varphi\left(
\bigwedge(G\setminus\langle B\rangle)
\right) = 0$.
\end{proof}

In Proposition~\ref{bound.prop}
we set $o_B = \bigwedge(G\setminus\langle B\rangle)$,
and call it an \emph{opening} if $o_B\not\in\langle B\rangle$.
If $o_B\in\langle B\rangle$ then
$\nabla_{B}\varphi(o_B) = 0$;
thus, the summation of (\ref{lfv.bound}) is over those antichains
$B$'s for which $o_B$ is an opening,
which is equal to
the left-hand side of (\ref{varphi.lfv})
in Section~\ref{val.intro}.
Hence, we have established
Theorem~\ref{val.3} by considering
the nondegenerate locally finite valuation $\varphi(x) = 1 - \Phi(x)$
in Theorem~\ref{lfv.rep}.

\section{L\'{e}vy exponents}
\label{levy.exp}

Throughout this section we assume that
$\mathbb{K}$ has a countable separating subset of Remark~\ref{separable}.
Then we can consider an open filter $\mathbb{V}$ of $\mathbb{K}$,
and set it as the $\sigma$-compact domain.
Let $\Phi$ be a completely alternating conjugate on the domain
$\mathbb{V}$,
and let $\{v_i\}_{i=1}^{\infty}$ be a chain in $\mathbb{V}$
so that $\mathbb{V} = \bigcup_{i=1}^\infty\langle v_i\rangle^*$;
set $v_0 = \hat{1}$ for convenience.
By Theorem~\ref{m.ca}
we can find a unique representation $\lambda$ of $\Phi$
over $\mathcal{V}' = \mathrm{OFilt}(\mathbb{V})\setminus\{\mathbb{V}\}$.
Without loss of generality
we may assume that
$\lambda_i = \Phi(v_{i}) - \Phi(v_{i-1}) > 0$,
and partition $\mathcal{V}'$ into
$\mathcal{V}_{v_{i-1}}^{v_i} = \{W\in\mathcal{V}':
v_{i-1}\in W,\, v_{i}\not\in W\}$
for $i=1,2,\ldots$.
It should be noted that
$\sigma$-compactness is not required when $\Phi$ is bounded,
in which case
the sequence of constant values $\lambda_i$ is replaced by
a single normalizing constant
$\lambda(\mathcal{V}') = \sup_{x\in \mathbb{V}}\Phi(x)$;
see Remark~\ref{m.ca.rem}.
Also, the case of $\lambda(\mathcal{V}') = 0$
[i.e., $\Phi\equiv 0$] is allowed in the following discussion.

Consider
some probability space $(\Omega,\mathcal{B},\mathbb{P})$.
Then we can introduce a Poisson random variable
$N_i$ with parameter $\lambda_i$ for each $i=1,2,\ldots$,
and conditionally given $N_i = n_i$ we can independently sample
random sets $\xi_{i,1},\ldots,\xi_{i,n_i}$ from
the probability  measure $\lambda(\cdot)/\lambda_i$
normalized and restricted on $\mathcal{V}_{v_{i-1}}^{v_i}$.
This collectively forms
a collection of Poisson events $\xi_{i,j}$'s
on the space $\mathcal{V}'$.
We can construct
$\zeta = \bigcap_{i=1}^{\infty}\bigcap_{j=1}^{N_i}\xi_{i,j}$,
where we set $\bigcap_{j=1}^{N_i}\xi_{i,j} = \mathbb{V}$ if $N_i = 0$.
Note that we have $\zeta = \mathbb{V}$ with probability
$e^{-\lambda(\mathcal{V}')}$ if $\Phi$ is bounded.
Suppose $x,y\in\zeta(\omega)$
at $\omega\in\Omega$.
Then we can find some $v_{k}$
satisfying $v_{k} \ll x\wedge y$,
and some $v_{k} \ll z \ll x\wedge y$ satisfying
$\llangle z \rrangle^*\subset
\bigcap_{i=1}^{k}\bigcap_{j=1}^{N_i(\omega)}\xi_{i,j}(\omega)$;
thus $\llangle z \rrangle^*\subset\zeta(\omega)$,
implying that $\zeta(\omega)$ is an open filter.
Thus we obtain a $\mathcal{V}$-valued random variable $\zeta$,
and call $\zeta$ a \emph{$\cap$-compound of Poisson events}
generated by $\Phi$.

\begin{lemma}\label{levy.lem}
Let $\Phi$ be a completely alternating conjugate
on the domain $\mathbb{V}$,
and let $\varphi(x) = \exp[-\Phi(x)]$
if $x\in \mathbb{V}$; otherwise, $\varphi(x) = 0$.
Then the $\cap$-compound $\zeta$ of Poisson events
generated by $\Phi$ represents $\varphi$.
\end{lemma}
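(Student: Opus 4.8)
The plan is to recognize the configuration $\{\xi_{i,j}\}$ as a Poisson point process $\Pi$ on $\mathcal{V}'$ whose mean measure is exactly the representing measure $\lambda$ of $\Phi$ supplied by Theorem~\ref{m.ca}, and then to identify the event $\{x\in\zeta\}$ with the void event $\{\Pi(\mathcal{V}^x)=0\}$ and invoke the Poisson void–probability formula. The case $x\notin\mathbb{V}$ is immediate: each $\xi_{i,j}$ lies in $\mathrm{OFilt}(\mathbb{V})$, so $\zeta\subseteq\mathbb{V}$ and $\mathbb{P}(x\in\zeta)=0=\varphi(x)$. Accordingly I fix $x\in\mathbb{V}$ for the remainder, and recall that $\zeta$ is almost surely an open filter, so the pointwise identity $\varphi(x)=\mathbb{P}(x\in\zeta)$ is exactly what is meant by ``$\zeta$ represents $\varphi$''.

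First I would check that $\{\mathcal{V}_{v_{i-1}}^{v_i}\}_{i\ge1}$ partitions $\mathcal{V}'$ and that $\lambda$ assigns it mass $\lambda_i$. Every $W\in\mathcal{V}'$ is a nonempty upper set, hence contains $\hat1=v_0$ but cannot contain all the $v_i$ (otherwise $W\supseteq\bigcup_i\langle v_i\rangle^*=\mathbb{V}$, forcing $W=\mathbb{V}$), so $W$ belongs to $\mathcal{V}_{v_{i-1}}^{v_i}$ for the least $i$ with $v_i\notin W$; this gives the partition. Since $v_i\le v_{i-1}$ and the elements of $\mathcal{V}'$ are upper sets, $\mathcal{V}^{v_{i-1}}\subseteq\mathcal{V}^{v_i}$ and $\mathcal{V}_{v_{i-1}}^{v_i}=\mathcal{V}^{v_i}\setminus\mathcal{V}^{v_{i-1}}$, so by Theorem~\ref{m.ca} its $\lambda$-measure is $\Phi(v_i)-\Phi(v_{i-1})=\lambda_i<\infty$. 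Consequently, for each $i$ the family $\{\xi_{i,j}\}_j$ — namely $\mathrm{Poisson}(\lambda_i)$ many i.i.d.\ points with law $\lambda(\cdot)/\lambda_i$ on $\mathcal{V}_{v_{i-1}}^{v_i}$ — is a Poisson process on $\mathcal{V}_{v_{i-1}}^{v_i}$ with mean $\lambda$ restricted there, and by independence across $i$ together with the superposition theorem (see~\cite{daley}) $\Pi:=\{\xi_{i,j}:i\ge1,\,1\le j\le N_i\}$ is Poisson on $\mathcal{V}'$ with mean measure $\lambda$. (When $\Phi$ is bounded one instead draws $\mathrm{Poisson}(\lambda(\mathcal{V}'))$ many i.i.d.\ points with law $\lambda/\lambda(\mathcal{V}')$ on all of $\mathcal{V}'$, with the identical conclusion.)

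Then, since $\zeta=\bigcap_{i,j}\xi_{i,j}$ with the convention that an empty intersection equals $\mathbb{V}\ni x$, we have $x\in\zeta$ precisely when no point of $\Pi$ falls in the compact — hence Borel — set $\mathcal{V}^x=\{W\in\mathcal{V}':x\notin W\}$; in particular the event $\{x\in\zeta\}=\bigcap_{i,j}\{x\in\xi_{i,j}\}$ is measurable and coincides with $\{\Pi(\mathcal{V}^x)=0\}$. Using $\lambda(\mathcal{V}^x)=\Phi(x)<\infty$ (Theorem~\ref{m.ca}) and the void–probability formula for Poisson processes,
\begin{equation*}
\mathbb{P}(x\in\zeta)=\mathbb{P}\bigl(\Pi(\mathcal{V}^x)=0\bigr)=e^{-\lambda(\mathcal{V}^x)}=e^{-\Phi(x)}=\varphi(x),
\end{equation*}
which is the assertion. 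I expect the only genuine bookkeeping to be in the second paragraph — confirming that the partition pieces carry exactly mass $\lambda_i$ and that superposing the per-piece Poisson processes yields a Poisson process with mean $\lambda$ — while the $x\notin\mathbb{V}$ case, the measurability of $\{x\in\zeta\}$, and the final void–probability computation are routine.
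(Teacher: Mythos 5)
Your proof is correct and is in substance the same as the paper's: the paper establishes $\mathbb{P}(x\in\zeta)=e^{-\lambda(\mathcal{V}^x)}=e^{-\Phi(x)}$ by conditioning on each $N_i$, using the Poisson generating function layer by layer and summing over the partition $\{\mathcal{V}_{v_{i-1}}^{v_i}\}$, which is exactly the computation you outsource to the superposition theorem and the void-probability formula for a Poisson process with mean measure $\lambda$. Your preliminary checks (the partition of $\mathcal{V}'$, $\lambda(\mathcal{V}_{v_{i-1}}^{v_i})=\lambda_i$, the trivial case $x\notin\mathbb{V}$, and the identification $\{x\in\zeta\}=\{\Pi(\mathcal{V}^x)=0\}$) are all sound and match the paper's setup.
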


\begin{proof}
If $x\not\in \mathbb{V}$ we see trivially
$\mathbb{P}(x\in\zeta) = 0$.
If $x\in \mathbb{V}$ then we obtain
\begin{align}
&
\mathbb{P}(x\in\zeta)
=
\prod_{i=1}^{\infty}
\mathbb{E}\left[\prod_{j=1}^{N_i}\mathbb{P}(x\in\xi_{i,j})\right]
=
\prod_{i=1}^{\infty}
\mathbb{E}\left[
\left(\frac{\lambda\left(\mathcal{V}_{x\wedge v_{i-1}}^{v_i}\right)}
{\lambda_i}\right)^{N_i}\right]
\nonumber
\\ & \hspace{0.2in} =
\prod_{i=1}^{\infty}
\exp[-\lambda(\mathcal{V}_{v_{i-1}}^{v_i}\setminus\mathcal{V}_{x})]
=
\exp[-\lambda(\mathcal{V}^x)]
=
\exp[-\Phi(x)],
\label{levy.rep}
\end{align}
where $\mathbb{E}[X]$ denotes the expectation
of a real-valued random variable $X$
on the probability space $(\Omega,\mathcal{B},\mathbb{P})$.
\end{proof}

We call $\Phi$ the \emph{L\'{e}vy exponent} of $\varphi$
if $\varphi$ of Lemma~\ref{levy.lem}
is obtained from a completely alternating conjugate $\Phi$.
This probabilistic interpretation of $\Phi$ is largely due to
Math\'{e}ron~\cite{matheron} and Norberg~\cite{norberg}
who found the formulation of $\varphi$
in Lemma~\ref{levy.lem} analogous to
L\'{e}vy-Khinchin formula (cf.~\cite{berg,bertoin}).

Consider independent and identically distributed (iid)
$\mathcal{F}$-valued random variables $\xi_1,\ldots,\xi_n$
whose distribution is determined by
$\varphi_{(n)}(x) = \mathbb{P}(x\in\xi_i)$.
Then the $\mathcal{F}$-valued random variable
$\bigcap_{i=1}^n\xi_i$ is distributed as
\begin{math}
\mathbb{P}\left(
x\in\bigcap_{i=1}^n\xi_i\right)
= \prod_{i=1}^n\mathbb{P}(x\in\xi_i)
= \left(\varphi_{(n)}(x)\right)^n.
\end{math}
Conversely, if for any integer $n$
there exists a completely monotone Scott function
$\varphi_{(n)}$ such that $\varphi =
\left(\varphi_{(n)}\right)^n$
then
$\varphi$ is called \emph{infinitely divisible}.

\begin{proposition}\label{inf.div}
A Scott function $\varphi$
is infinitely divisible if and only if
$\varphi$ has a L\'{e}vy exponent $\Phi$.
\end{proposition}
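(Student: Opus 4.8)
The plan is to establish the two implications separately; the converse (a L\'{e}vy exponent exists $\Rightarrow$ $\varphi$ is infinitely divisible) is immediate from Lemma~\ref{levy.lem}, while the forward implication requires a compound‑Poisson limiting argument. For the converse, suppose $\varphi$ has a L\'{e}vy exponent $\Phi$, i.e.\ $\Phi$ is a completely alternating conjugate on an open filter $\mathbb{V}$ with $\varphi(x)=\exp[-\Phi(x)]$ on $\mathbb{V}$ and $\varphi(x)=0$ otherwise. For each positive integer $n$ the scaled function $\Phi/n$ is again a completely alternating conjugate on $\mathbb{V}$, so by Lemma~\ref{levy.lem} the function $\varphi_{(n)}$ defined by $\varphi_{(n)}(x)=\exp[-\Phi(x)/n]$ on $\mathbb{V}$ and $0$ elsewhere is represented by a $\cap$-compound of Poisson events; its distribution is a Radon probability measure on $\mathcal{F}=\mathrm{OFilt}(\mathbb{K})$, so by Theorem~\ref{m.cm} $\varphi_{(n)}$ is a completely monotone Scott function. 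Since $(\varphi_{(n)})^{n}=\exp[-\Phi]=\varphi$, the Scott function $\varphi$ is infinitely divisible.

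For the forward implication, assume $\varphi$ is infinitely divisible, so that $\varphi^{1/n}$ is a completely monotone Scott function for every $n$; write $\varphi_{(n)}:=\varphi^{1/n}$. The first step is to show that $\mathbb{V}:=\{x\in\mathbb{K}:\varphi(x)>0\}$ is an open filter. It is an upper set because $\varphi$ is increasing, and it is Scott open because $\varphi$ is lower semicontinuous on $\mathbb{K}$. For closure under meets, fix $x,y\in\mathbb{V}$; complete monotonicity of $\varphi_{(n)}$ evaluated at $x\vee y$ gives $\nabla_{x,y}\varphi_{(n)}(x\vee y)\ge 0$, so that
\begin{equation*}
  \varphi_{(n)}(x\wedge y)\ \ge\ \varphi_{(n)}(x)+\varphi_{(n)}(y)-\varphi_{(n)}(x\vee y)\ \ge\ \varphi_{(n)}(x)+\varphi_{(n)}(y)-1 ,
\end{equation*}
and since $\varphi_{(n)}(x)=\varphi(x)^{1/n}\to 1$ and $\varphi_{(n)}(y)=\varphi(y)^{1/n}\to 1$, the right-hand side is positive for all large $n$; hence $\varphi(x\wedge y)=\varphi_{(n)}(x\wedge y)^{n}>0$ and $x\wedge y\in\mathbb{V}$. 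As $\mathbb{K}$ carries a countable separating subset, $\mathbb{V}$ is automatically $\sigma$-compact (Remark~\ref{separable}), which places us in the setting of Section~\ref{levy.exp}.

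Having identified the open filter $\mathbb{V}$, I would set $\Phi(x):=-\log\varphi(x)$ for $x\in\mathbb{V}$ and check that $\Phi$ is a completely alternating conjugate on $\mathbb{V}$. It is nonnegative with $\Phi(\hat 1)=0$ (since $0<\varphi\le 1$ on $\mathbb{V}$), decreasing, and Scott continuous on $\mathbb{V}$, because $\varphi$ restricted to the Scott open set $\mathbb{V}$ is a Scott function with values in $(0,1]$ and $-\log$ is a continuous decreasing bijection of $(0,1]$ onto $[0,\infty)^{*}$; hence $\Phi$ is a conjugate Scott function on $\mathbb{V}$. For complete alternation, put $\Phi_{n}(x):=n\bigl(1-\varphi_{(n)}(x)\bigr)=n\bigl(1-\varphi(x)^{1/n}\bigr)$ for $x\in\mathbb{V}$; this is a finite, nonnegative, decreasing function with $\nabla_{A}\Phi_{n}\le 0$ for every nonempty finite $A\subseteq\mathbb{V}$, since $1-\varphi_{(n)}$ has these properties — complete monotonicity of $\varphi_{(n)}$ gives $\nabla_{A}(1-\varphi_{(n)})=-\nabla_{A}\varphi_{(n)}\le 0$ — and they are preserved under multiplication by $n>0$ and restriction to the subsemilattice $\mathbb{V}$. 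From $\varphi(x)^{1/n}=\exp\bigl[n^{-1}\log\varphi(x)\bigr]=1+n^{-1}\log\varphi(x)+O(n^{-2})$ we obtain $\Phi_{n}(x)\to-\log\varphi(x)=\Phi(x)$ pointwise on $\mathbb{V}$. Since each $\nabla_{A}\Phi_{n}(x)=\sum_{B\subseteq A}(-1)^{|B|}\Phi_{n}\bigl(\bigwedge B\wedge x\bigr)$ is a finite linear combination of values of $\Phi_{n}$, letting $n\to\infty$ gives $\nabla_{A}\Phi(x)\le 0$. Thus $\Phi$ is a completely alternating conjugate on $\mathbb{V}$, and $\exp[-\Phi]=\varphi$ on $\mathbb{V}$ while $\varphi=0$ off $\mathbb{V}$; in other words $\varphi$ is precisely the function of Lemma~\ref{levy.lem} attached to $\Phi$, so $\Phi$ is a L\'{e}vy exponent of $\varphi$.

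The main obstacle is the first step of the forward implication, namely that $\{\varphi>0\}$ is a filter: this fails for a generic completely monotone Scott function, and it is exactly here that infinite divisibility is used — via $\varphi_{(n)}\to 1$ on the support together with the second-order monotonicity inequality. Once $\mathbb{V}$ is known to be an open filter, identifying $\Phi$ as $\lim_{n} n(1-\varphi^{1/n})$ and passing the finitely many inequalities $\nabla_{A}\Phi_{n}\le 0$ to the limit are routine.
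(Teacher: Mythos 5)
Your proof is correct and follows essentially the same route as the paper: the same second-difference inequality with $\varphi^{1/n}\to 1$ to show $\{\varphi>0\}$ is an open filter, the same identification $\Phi=-\log\varphi=\lim_n n(1-\varphi^{1/n})$ with complete alternation passed through the pointwise limit, and the same converse via $\Phi/n$ and the $\cap$-compound of Poisson events of Lemma~\ref{levy.lem}. You merely spell out a few steps the paper leaves implicit (e.g.\ why $\varphi_{(n)}=\exp[-\Phi/n]$ is a completely monotone Scott function), so no substantive difference.
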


\begin{proof}
Suppose that $\varphi$ is infinitely divisible.
Then we can claim that
the Scott open subset $\mathbb{V} = \{x\in \mathbb{K}: \varphi(x) > 0\}$
is a filter.
If not, we should find a pair $x,y\in \mathbb{V}$ satisfying
$\varphi(x) > 0$, $\varphi(y) > 0$, and $\varphi(x\wedge y) = 0$,
for which the complete monotonicity of $\varphi_{(n)}$
implies
$1 \ge \varphi_{(n)}(x\vee y)
\ge \varphi_{(n)}(x) + \varphi_{(n)}(y) - \varphi_{(n)}(x\wedge y)
= \varphi(x)^{1/n} + \varphi(y)^{1/n}$.
But this cannot hold for sufficiently large $n$, drawing
a contradiction.
Thus, we see that
$$
\Phi(x) = -\ln\varphi(x) =
\lim_{n\to\infty}n[1-\varphi(x)^{1/n}]
= \lim_{n\to\infty}n[1-\varphi_{(n)}(x)]
$$
is a completely alternating conjugate on the domain $\mathbb{V}$.

Conversely suppose that $\Phi$ is the
L\'{e}vy exponent of $\varphi$ on some open filter $\mathbb{V}$.
Then $\Phi/n$ generates
a $\cap$-compound $\zeta_{(n)}$ of Poisson events,
and it is the L\'{e}vy exponent of
$\varphi_{(n)}(x) = \exp[-\Phi(x)/n]$ if $x\in \mathbb{V}$;
otherwise, $\varphi_{(n)}(x) = 0$.
Thus, we find
$\varphi = \left(\varphi_{(n)}\right)^n$.
\end{proof}

In what follows we assume that $\mathbb{K}$ is distributive.
A Scott function $\varphi$ is called an
\emph{exponential valuation} if
$\varphi(x)\varphi(y) = \varphi(x\wedge y)\varphi(x\vee y)$
for every pair $x,y\in \mathbb{K}$.
Then we can immediately observe
the following characterization of exponential valuation.

\begin{proposition}\label{pois.prop}
A Scott function $\varphi$
is a strictly positive exponential valuation
if and only if $\varphi$ has
a L\'{e}vy exponent $\Phi$ which
is a conjugate valuation on $\mathbb{K}$.
\end{proposition}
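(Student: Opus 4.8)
The plan is to observe that the whole statement is just the logarithmic transcription of Proposition~\ref{valuation}, applied to the decreasing function $\Phi=-\log\varphi$, together with the remark following that proposition identifying the $1$-valuation identity~(\ref{module}) with the conjugate valuation property. So the proof will consist of passing back and forth between the multiplicative identity $\varphi(x)\varphi(y)=\varphi(x\wedge y)\varphi(x\vee y)$ and the additive identity $\Phi(x)+\Phi(y)=\Phi(x\wedge y)+\Phi(x\vee y)$, with a short check that $-\log$ transports Scott-continuity correctly.

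For the forward implication I would start from a strictly positive exponential valuation $\varphi$. Since $\varphi$ is an increasing Scott function with $\varphi(\hat 1)=1$, it maps $\mathbb{K}$ into $(0,1]$, so $\Phi:=-\log\varphi$ is a well-defined nonnegative decreasing function with $\Phi(\hat 1)=0$. First I would verify that $\Phi$ is a conjugate Scott function: the map $-\log\colon(0,1]\to[0,\infty)^*$ is order-preserving and sends a directed supremum in $(0,1]$ to the corresponding infimum, i.e.\ to a directed supremum in the dual order, hence is Scott-continuous, so $\Phi$ is a composition of Scott-continuous maps. Taking $-\log$ of the exponential-valuation identity gives exactly~(\ref{module}) for $\Phi$, so by Proposition~\ref{valuation}(ii) $\Phi$ is a completely alternating (and completely $\vee$-monotone) conjugate, hence a conjugate valuation by the observation recorded just after Proposition~\ref{valuation}. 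Finally, strict positivity gives $\mathbb{V}=\{x\in\mathbb{K}:\varphi(x)>0\}=\mathbb{K}$, and since $\varphi(x)=\exp[-\Phi(x)]$ on all of $\mathbb{V}=\mathbb{K}$ with $\Phi$ a completely alternating conjugate there, $\Phi$ is by definition the L\'evy exponent of $\varphi$ (cf.\ Lemma~\ref{levy.lem}).

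For the converse, suppose $\varphi$ has a L\'evy exponent $\Phi$ that is a conjugate valuation on $\mathbb{K}$. Being a conjugate Scott function on all of $\mathbb{K}$, $\Phi$ is finite everywhere, so $\varphi(x)=\exp[-\Phi(x)]>0$ for every $x$ and $\varphi$ is strictly positive; moreover $\varphi=\exp(-\cdot)\circ\Phi$ is Scott-continuous (the argument of the previous paragraph run in reverse, $\exp(-\cdot)\colon[0,\infty)^*\to[0,\infty)$ being Scott-continuous), hence a Scott function. Since $\Phi$ is a conjugate valuation it satisfies~(\ref{module}) (again by Proposition~\ref{valuation}(ii)), and exponentiating gives $\varphi(x)\varphi(y)=\varphi(x\wedge y)\varphi(x\vee y)$, so $\varphi$ is an exponential valuation.

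I do not anticipate a genuine obstacle: the content is entirely the logarithmic restatement of Proposition~\ref{valuation}. The only two points requiring any attention are (i) checking that $-\log$ and $\exp(-\cdot)$ are Scott-continuous between $(0,1]$ and $[0,\infty)^*$, so that $\Phi$ and $\varphi$ are bona fide (conjugate) Scott functions rather than merely monotone, and (ii) noticing that strict positivity is exactly the condition making the open filter $\mathbb{V}$ on which the L\'evy exponent lives equal to all of $\mathbb{K}$, which is what justifies the phrase ``conjugate valuation on $\mathbb{K}$'' in the statement.
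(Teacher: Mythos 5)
Your proposal is correct and follows essentially the same route as the paper: take $\Phi=-\ln\varphi$, observe that the exponential-valuation identity becomes~(\ref{module}), and invoke Proposition~\ref{valuation} to conclude $\Phi$ is a conjugate valuation, with the converse obtained by exponentiating. Your extra checks (Scott-continuity of $-\log$ and $\exp(-\cdot)$, and strict positivity forcing $\mathbb{V}=\mathbb{K}$) simply make explicit what the paper leaves implicit in its one-line proof.
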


\begin{proof}
If $\varphi$ is an exponential valuation
then $\Phi(x) = -\ln\varphi(x)$ satisfies
(\ref{module}); thus, it is a conjugate valuation on the domain
$\mathbb{K}$ by Proposition~\ref{valuation}.
The converse is obvious.
\end{proof}

Consider the L\'{e}vy exponent $\Phi$ of
Proposition~\ref{pois.prop},
and choose a sequence $\{v_i\}$ satisfying
$\mathbb{K} = \bigcup_{i=1}^\infty\langle v_i\rangle^*$
and $v_{i+1}\ll v_i$ for $i=1,2,\ldots$.
By Theorem~\ref{k.ca} we can find that
the representation $\lambda$ of $\Phi$ is a Radon measure on
$\mathcal{P}_1'$.
As demonstrated in Lemma~\ref{levy.lem},
we can construct a $\cap$-compound
$\zeta = \bigcap_{i=1}^{\infty}\bigcap_{j=1}^{N_i}\xi_{i,j}$
of Poisson events $\xi_{i,j}$'s
on some probability space $(\Omega,\mathcal{B},\mathbb{P})$.
Since each $\xi_{i,j}$ takes values on
\begin{math}
  \mathcal{P}_{v_{i-1}}^{v_i}
  = \{\mathbb{K}\setminus\langle z\rangle:
  v_i\le z,\,v_{i-1}\not\le z,\, z\in P\},
\end{math}
the $\cap$-compound $\zeta$
takes values on $\mathcal{P}_{\mathrm{lf}}$.
By Theorem~\ref{lfv.rep} we obtain the following corollary to
Proposition~\ref{pois.prop}.

\begin{corollary}\label{pois.cor}
If a Scott function $\varphi$
is a strictly positive exponential valuation
then it is a locally finite valuation.
\end{corollary}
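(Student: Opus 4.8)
The plan is to identify the representation $\mu$ of $\varphi$ over $\mathcal{F}$ appearing in Theorem~\ref{lfv.rep} with the law of the $\cap$-compound $\zeta$ of Poisson events constructed just before the statement, and then to verify directly that $\zeta$ is almost surely locally finite; once $\mu(\mathcal{P}_{\mathrm{lf}})=1$ the conclusion is immediate from Theorem~\ref{lfv.rep}. I would start from Proposition~\ref{pois.prop}: a strictly positive exponential valuation $\varphi$ has a L\'evy exponent $\Phi$ that is a conjugate valuation on all of $\mathbb{K}$, so Theorem~\ref{k.ca} with $k=1$ shows that the representation $\lambda$ of $\Phi$ is a Radon measure on $\mathcal{P}_1'$, i.e.\ is carried by the prime (singleton) elements. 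With the chain $v_{i+1}\ll v_i$, $\mathbb{K}=\bigcup_i\langle v_i\rangle^*$ fixed and $\zeta=\bigcap_{i=1}^\infty\bigcap_{j=1}^{N_i}\xi_{i,j}$ formed exactly as in the construction preceding Lemma~\ref{levy.lem}, Lemma~\ref{levy.lem} gives $\mathbb{P}(x\in\zeta)=\varphi(x)$ for every $x\in\mathbb{K}$. Since $\varphi$ is a Scott function and the law of $\zeta$ is a Radon measure on $\mathcal{F}$ that represents $\varphi$, the uniqueness clause of Theorem~\ref{m.cm} forces this law to be exactly $\mu$ (and, in passing, shows $\varphi$ is completely monotone, so that Theorem~\ref{lfv.rep} indeed applies).

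The substantive step is the almost-sure inclusion $\zeta\in\mathcal{P}_{\mathrm{lf}}$, which I would establish through the maps $\Xi_i(F)=F\cap\llangle v_i\rrangle^*$ of Section~\ref{lfv.sec} (taking $w_i=v_i$ there). Each $\xi_{i,j}$ takes values in $\mathcal{P}_{v_{i-1}}^{v_i}=\{\mathbb{K}\setminus\langle z\rangle:v_i\le z,\ v_{i-1}\not\le z,\ z\in P\}$, so it equals $\mathbb{K}\setminus\langle z_{i,j}\rangle$ for a single prime $z_{i,j}$. The key observation is that a term with $i'>i$ contributes nothing to $\Xi_i(\zeta)$: if $\xi_{i',j}=\mathbb{K}\setminus\langle z\rangle$ (so $v_{i'-1}\not\le z$) and some $y$ satisfied $y\le z$ and $v_i\ll y$, then $v_i\le y\le z$, whence $v_{i'-1}\le v_i\le z$ (the chain is decreasing and $i'-1\ge i$), contradicting $v_{i'-1}\not\le z$; thus $\langle z\rangle\cap\llangle v_i\rrangle^*=\varnothing$ and $\Xi_i(\xi_{i',j})=\llangle v_i\rrangle^*$, the full filter itself. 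Since $\Xi_i(\zeta)=\bigcap_{i',j}\Xi_i(\xi_{i',j})$ by elementary set algebra, only the finitely many terms with $i'\le i$ survive, so $\Xi_i(\zeta)=\llangle v_i\rrangle^*\setminus\langle A\rangle$ with $A=\{z_{i',j}:i'\le i,\ j\le N_{i'}\}\subseteq P$ and $|A|\le N_1+\cdots+N_i$; that is, $\Xi_i(\zeta)\in\Xi_i(\mathcal{P}_{N_1+\cdots+N_i})$. As each $N_i$ is almost surely finite, for every $i$ we obtain $\zeta\in\bigcup_k\Xi_i^{-1}(\Xi_i(\mathcal{P}_k))$ almost surely, hence $\zeta\in\mathcal{P}_{\mathrm{lf}}$ almost surely by \eqref{p.lf}.

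Assembling the pieces, $\mu(\mathcal{P}_{\mathrm{lf}})=\mathbb{P}(\zeta\in\mathcal{P}_{\mathrm{lf}})=1$, and Theorem~\ref{lfv.rep} then declares $\varphi$ a locally finite valuation. Beyond this bookkeeping, the one point that needs care is the claim used in the first paragraph that the law of $\zeta$ is a genuine Radon measure on the compact Hausdorff lattice $\mathcal{F}$, so that the uniqueness in Theorem~\ref{m.cm} is available; this is already built into the statement of Lemma~\ref{levy.lem} and is inherited from the Radonness of the normalized restrictions $\lambda(\cdot)/\lambda_i$ on $\mathcal{P}_{v_{i-1}}^{v_i}$ from which the $\xi_{i,j}$ are drawn. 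I therefore expect the computation of $\Xi_i(\zeta)$ in the second paragraph — the localization of the countable intersection to the finitely many Poisson events seen inside the window $\llangle v_i\rrangle^*$ — to be the only real obstacle.
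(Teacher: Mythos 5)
Your argument is correct and is essentially the paper's own proof: Proposition~\ref{pois.prop} and Theorem~\ref{k.ca} put the L\'{e}vy measure on $\mathcal{P}_1'$, the $\cap$-compound $\zeta$ of Lemma~\ref{levy.lem} represents $\varphi$, and its almost sure local finiteness combined with Theorem~\ref{lfv.rep} gives the claim. The only difference is that you spell out two steps the paper leaves implicit — identifying the law of $\zeta$ with the unique representation $\mu$ via Theorem~\ref{m.cm}, and the window computation $\Xi_i(\zeta)=\llangle v_i\rrangle^*\setminus\langle A\rangle$ with $A$ finite — both of which are sound.
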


In the context of Theorem~\ref{pois.thm}
we can assume that $R$ is $\sigma$-compact.
Then we find an increasing sequence $W_i$ of compact subsets of
$R$ satisfying $R = \bigcup_{i=1}^\infty W_i$,
and set $W_0 = \varnothing$.
Let $\varphi$ be an exponential valuation
continuous on the right over the class $\mathcal{K}$
of compact subsets of $R$.
By Theorem~\ref{val.2}
(or equivalently by Theorem~\ref{k.ca})
the L\'{e}vy exponent $\Phi$ of
Proposition~\ref{pois.prop} is uniquely represented by
a Radon measure $\lambda$ on $R$.
We can generate Poisson events $\xi_{i,j}$'s of a singleton
from the normalized measure $\lambda$
each restricted on $W_i\setminus W_{i-1}$,
and define the number $N(Q)$ of
Poisson events $\xi_{i,j}$'s satisfying $\xi_{i,j}\in Q$.
Then it becomes a routine argument to show that
$N(Q)$ has a Poisson distribution with parameter $\lambda(Q)$,
and that $N(Q_1),\ldots,N(Q_k)$ are independent
for any disjoint sequence of $Q_1,\ldots,Q_k$.
Therefore, we have constructed a general Poisson process.
Conversely, the zero probability function
$\varphi(Q) = \mathbb{P}(N(Q) = 0) = e^{-\lambda(Q)}$
is a strictly positive exponential valuation,
which completes the proof of Theorem~\ref{pois.thm}.

\end{document}